\DeclareMathAlphabet\mathbfcal{OMS}{cmsy}{b}{n}
\theoremstyle{plain}
\newtheorem{Thm}[subsection]{Theorem}
\newtheorem{Cor}[subsection]{Corollary}
\newtheorem{Prop}[subsection]{Proposition}
\newtheorem{Lem}[subsection]{Lemma}
\theoremstyle{definition}
\newtheorem{Rem}[subsection]{Remark}
\newtheorem{Def}[subsection]{Definition}
\renewcommand{\phi}{\varphi}
\newcommand{\RR}{\mathbb{R}}
\newcommand{\ZZ}{\mathbb{Z}}
\newcommand{\NN}{\mathbb{N}}
\newcommand{\QQ}{\mathbb{Q}}
\newcommand{\eps}{\varepsilon}
\renewcommand{\emptyset}{\varnothing}
\renewcommand{\setminus}{-}
\newcommand{\larg}{{\operatorname{large}}}
\newcommand{\fin}{{\operatorname{fin}}}
\newcommand{\pr}{\operatorname{pr}}
\newcommand{\inc}{\operatorname{inc}}
\newcommand{\infi}{{\operatorname{inf}}}
\newcommand{\ns}{{}^*\!}
\newcommand{\std}{\operatorname{std}}
\newcommand{\Fin}{{\mathfrak{Fin}}}
\newcommand{\Bnd}{{\mathfrak{Bnd}}}
\newcommand{\Cmp}{{\mathfrak{Cmp}}}
\newcommand{\Uni}{{\mathfrak{Uni}}}
\newcommand{\fS}{{\mathfrak S}}
\newcommand{\Isom}{{\operatorname{Isom}}}
\begin{document}

\title{Polyhedral compactifications, I}
\author{Corina Ciobotaru, Linus Kramer and Petra Schwer}
\date{\today}
\subjclass{2010 Mathematics Subject Classification. Primary 53C23; Secondary 54D35.}

\begin{abstract}
	In this work we describe horofunction compactifications of metric spaces and finite dimensional real vector spaces through 
	asymmetric metrics and asymmetric polyhedral norms by means of nonstandard methods, that is, ultrapowers of the spaces at hand.
	The polyhedral compactifications of the vector spaces carry the structure of stratified spaces with the strata indexed by dual faces of the polyhedral unit ball. Explicit neighborhood bases and descriptions of the horofunctions are provided.
\end{abstract}

\maketitle

\section{Introduction}

We introduce a compactification for certain metric spaces, using asymmetric metrics and nonstandard methods.
Our ultimate goal is to study polyhedral
compactifications of Euclidean buildings and related spaces via asymmetric horofunctions \cite{CKS}.
Such compactifications of Euclidean buildings have been studied recently by several authors, using quite different methods.
The work in \cite{Charignon} and \cite{Landvogt} relies heavily on combinatorial properties
of unbounded polytopes or diverging sequences in maximal flats, while \cite{RTW, Werner3} employ
deep arithmetic methods.
Our approach to these compactifications is different, new, and geometric.
Our starting point is Gromov's embedding \cite{Gromov}, which we briefly recall.
If $(X,d)$ is a proper CAT(0) space, with a fixed base point $o\in X$, then we may map
$X$ injectively into the ring of continuous functions on $X$ by assigning to $p\in X$ the function
$x\longmapsto d(p,x)-d(p,o)$. The closure of $X$ in $\subseteq C(X)$ (in the topology
of uniform convergence on compact sets, or, equivalently, in the topology of pointwise convergence)
can be identified with the
visual bordification $X\cup \partial_\infty X$ of $X$ \cite[II.8]{BH}. The first main idea is to replace the metric $d$
in Gromov's construction by a Lipschitz equivalent asymmetric distance function $\delta$.
Stretching the standard terminology slightly, we will call the map
$h_p(x)=\delta(p,x)-\delta(p,o)$ a horofunction centered at $p$. 
We put $\widehat X=\overline{\{h_p\mid p\in X\}}$, and we identify $p\in X$ with 
$h_p$. The new elements that appear in $\widehat X$
may be viewed as horofunctions centered at points 'infinitely far away'.

The traditional approach to these horofunctions is to study divergent sequences in $X$.
This leads to rather complicated combinatorial notions of different types of divergence of sequences in $X$.
We follow a completely different approach, which avoids these sequences altogether.
We use the nonstandard extension $\ns X$ of the metric space $(X,d)$, 
and the nonstandard reals
$\ns\RR$. The nonstandard reals form a real closed, non-archimedean field. It contains thus elements
which are bigger than any real number. In this way we may view the horofunctions in $X$
as (standard parts of) functions $x\longmapsto\delta(p,x)-\delta(p,o)$, where now $p\in\ns X$
is a point which is possibly infinitely far away from $o$. 
We believe that this viewpoint is both intuitive and particularly simple.
The following picture visualizes the situation. The point $p$ is in the nonstandard extension $\ns X$ of $X$
at infinite distance from the basepoint $o$. We put $h_p(x)=\std(\delta(p,x)-\delta(p,o))$.

\begin{center}
	\colorlet{shadecolor}{gray!40}
	\begin{tikzpicture}[scale=0.5]
		\draw[shading=radial,inner color=shadecolor,style=dashed] (0,0) circle(2);
		\node at (3,-1) {$X$};
		\draw (0,0) arc(0:18:8);
		\draw (0,0) arc(0:-18:8);
		\node at (-1,-3) {$h_p=0$};
		\node at (-8,0.5) {$p$};
		\draw[fill=black] (-8,0) circle(0.05);
		\node at (0.5,0.5) {$o$};
		\draw[fill=black] (0,0) circle(0.05);
	\end{tikzpicture}
\end{center}

Several things can be seen from this picture. Firstly,
the horofunction $h_p$ is only the restriction of the standard part of $x\longmapsto\delta(p,x)-\delta(p,o)$ to $X$.
Therefore we should expect that different points $p,q\in\ns X$ may yield the same horofunction on $X$.
This leads to an equivalence relation on $\ns X$ which needs to be studied. It corresponds, roughly speaking,
to the different types of divergence of sequences in $X$.
Secondly, we note that we could rescale the metric so that the distance between $p$ and $o$ becomes $1$.
This rescaling process shrinks $X$ to an infinitesimally small spot. 
If the geometry of $X$ is scale-invariant, then the large scale shape of the level set
$h_p=0$ is determined by the infinitesimal structure of the unit spheres of the original distance function.
If $V=X$ is a finite dimensional real vector space and if $\delta$ is given by a (possibly asymmetric) norm
on $V$,
then there are two instances where this infinitesimal structure is accessible: if the unit sphere $S=\{v\in V\mid h_o(v)=1\}$
is a smooth hypersurface, then the infinitesimal structure is given by the tangent hyperplanes. In this case,
the level set $h_p=0$, for $p$ at infinite distance from $o$, is a linear hyperplane and $h_p$ is a linear functional on $V$.
Then $\widehat V$  can be identified with the dual unit ball, in the dual vector space $V^\vee$ of $V$.
The second case where the infinitesimal structure is accessible is the case where the unit ball $B=\{v\in V\mid h_o(v)\leq1\}$ is
a convex compact polyhedron. This case is analyzed in the present article.
The following theorem is one of the main results.

\textbf{Theorem.}
{\em The compactification $\widehat{V}$ of a finite dimensional real vector space $V$ with respect to an asymmetric polyhedral norm $\nu$ is a stratified space, where the strata are indexed by the dual faces of the polyhedral unit ball $B$.
The combinatorial structure of the stratification of $\widehat{V}$ (with respect to the closure relation) is isomorphic to the poset of all faces of the dual polyhedron $B^\vee$ of $B$.
Moreover, $\widehat V$ is homeomorphic to the dual polyhedron $B^\vee$ by a homeomorphisms that preserves the stratification.}

Since Euclidean buildings are composed of copies of Euclidean space, this analysis of polyhedral compactifications
of finite dimensional real vector spaces is crucial for understanding polyhedral compactifications of Euclidean buildings.

This article is organized as follows. In Section \ref{sec:topology} we review the relevant topologies on the space $C(X)$ of continuous
real functions on a metric space $X$. In Section \ref{sec:metrics-norms} we review asymmetric metrics and asymmetric norms. We do this in the setting
of metric spaces, which allows us to bypass all questions about backward and forward topologies which may arise
otherwise. In Section \ref{sec:asymmetric-bordification} we adapt Gromov's embedding to the case of asymmetric metrics. We do this with some care,
since Gromov's result is very often misstated in the literature (even in \cite{BH}). The condition that $X$ is in one
or the other way geodesic is very often omitted, although it is essential in the proof. In Section \ref{sec:5} we introduce the
necessary facts about ultraproducts. We then show that every horofunction arises as some $h_p$, as described above.
If the metric space $X$ is proper, then conversely every function $h_p$ appears in the bordification.
In Section \ref{sec:polyhedral-norms} we put everything together to determine the polyhedral compactification of a finite dimensional vector space $V$.
Let $B$ denote the unit ball of the asymmetric norm on $\RR^m$, with its polyhedral combinatorial structure.
We show that this combinatorial structure determines the equivalence relation mentioned above
(when is $h_p=h_q$?). We obtain a stratification of $\widehat V$ by subsets homeomorphic to affine spaces,
whose combinatorics is encoded by the dual polytope $B^\vee$ of $B$. The last step is the construction of
an explicit homeomorphism $\widehat V\longrightarrow B^\vee$ which preserves the stratification.


Some comments on the history may be in order. 
During the last years, horofunction compactifications of metric spaces and in particular of Riemannian symmetric spaces have been studied by various authors.
We just mention
\cite{Brill},  \cite{Gutierrez}, \cite{HSWW}, \cite{JiSchilling16,JiSchilling}, \cite{KMN}, \cite{KaLe}, \cite{Walsh},
and the excellent books \cite{BJ,GJT}.
The idea to use Gromov's embedding for the construction of
polyhedral compactifications of buildings via asymmetric polyhedral metrics occurred to us in 2012. At about the same time, this
approach was developed independently for Riemannian symmetric spaces by
Kapovich and Leeb \cite{KaLe}, and independently by Haettel, Schilling, Wienhard and Walsh \cite{HSWW}. 
We then discovered that Brill, a student of Behr, had already
used this idea in his 2006 PhD thesis \cite{Brill}. This thesis, which is very concise, lacks just one ingredient:
Brill only considers symmetric metrics.
Our use of nonstandard methods in the context of horofunctions is, to the best of our knowledge, new.

In subsequent work \cite{CKS} we will study horofunctions on an affine building $X$  using the ultrapower $\ns X$ of the building. We note that $\ns X$ is a so-called 
$\Lambda$-building, as studied in \cite{Bennett,KleinerLeeb,BaseChange,KramerWeiss}, axiomatized in \cite{BennettSchwer} and studied by Schwer in her PhD thesis \cite{H-Thesis}.
We will also compare these compactifications in \cite{CKS} with the compactifications constructed in \cite{Landvogt,RTW,Werner3}.
Finally, we will study the dynamics of discrete group actions on the building, using the compactifications. 

\subsection*{Acknowledgement:}

LK and PS were partially supported by SPP2026 \emph{Geometry at Infinity}, project no. 20.  
LK  benefited very much from a stay at the Mathematical Institute Oberwolfach in the winter of 2020. CC  was partially supported by the European Union’s Horizon 2020 research and innovation programme under the Marie Sklodowska-Curie grant agreement No 754513 and The Aarhus University Research Foundation. 
The authors would like to thank Andreas Berning, Siegfried Echterhoff, 
Gaiane Panina and the referee for helpful remarks.


\section{Topologies on function spaces}\label{sec:topology}

In this section we review some material on topologies on function spaces and state Ascoli's theorem. 

Let $(X,d)$ be a metric space. Given $p\in X$ and $\eps\geq0$, we put
\[
B_\eps(p)=\{q\in X\mid d(p,q)<\eps\}\quad\text{ and }\quad
\bar B_\eps(p)=\{q\in X\mid d(p,q)\leq\eps\}.
\]
We call $(X,d)$ \emph{proper} if every closed bounded set $K\subseteq X$ is compact.
Proper metric spaces are always complete.
We recall some basic facts about function spaces.
Let $C(X)$ denote the commutative $\RR$-algebra of all real-valued continuous functions on $X$. There are several topologies on $C(X)$ and related $\RR$-algebras which we briefly review.
We will be interested in the \emph{topology of uniform convergence on compact sets}, the 
\emph{topology of pointwise
	convergence} and the \emph{topology of uniform convergence on bounded sets}. There is a uniform
way to construct these topologies which goes as follows, see \cite{Schaefer,Kelley}.

\medskip
Let $\fS$ be a collection of subsets
of the metric space $X$. 
Assume that 
\begin{enumerate}[(i)]
	\item $X=\bigcup \fS$ and
	\item for all $P,Q\in \fS$, there exists $R\in \fS$ with $P\cup Q\subseteq R$.
\end{enumerate}
Let $C_{\fS}(X)$ denote the vector space of all
real functions $\phi$ on $X$ which are bounded and continuous on every member $Q$ of $\fS$.
Given $Q\in\fS$, $\eps>0$ and $\phi\in C_{\fS}(X)$ we define 
\[
N_{Q,\eps}(\phi)=\{\psi\in C_{\fS}(X)\mid | \psi(q)-\phi(q)|<\eps\text{ for all }q\in Q\}.
\]

These sets form neighborhood bases for a topology $\mathcal T_\fS$ on $C_\fS(X)$, see \cite[III.3]{Schaefer}.
A set $U\subseteq C_{\fS}(X)$ is open if for every $\phi\in U$,
there exist $Q\in\fS$ and $\eps>0$ such that $N_{Q,\eps}(\phi)\subseteq U$. 
In this topology, $C_{\fS}(X)$ becomes a locally convex topological vector space and a commutative topological $\RR$-algebra.
The cases of interest to us are the following.

\begin{enumerate}[\rm(1)]
	\item $\fS=\Fin$ is the collection of all finite subsets of $X$. This yields the topology
	$\mathcal T_\Fin$ of pointwise convergence on $C_\Fin(X)=X^\RR=\prod_X\RR$, which coincides with the 
	product topology. This topology does not depend on the metric $d$. 
	
	\item $\fS=\Uni=2^X$ is the collection of all subsets of $X$. Then we obtain the topology 
	$\mathcal T_\Uni$ of uniform convergence
	on the space $C_\Uni(X)=BC(X)$ of all bounded continuous functions on $X$, and 
	$BC(X)$ is a Banach space. This space is not relevant for us, 
	and $\Uni$ will not be considered below.
	
	\item For $\fS=\Cmp=\{K\subseteq X\mid K\text{ is compact}\}$ we obtain on $C_\Cmp(X)$ the topology
	$\mathcal T_\Cmp$ of uniform convergence on compact sets, which coincides with the compact-open topology.
	Since $X$ is a metric space, a function on $X$ is continuous if and only if its restriction to every
	compact subset of $X$ is continuous, see \cite[VI.8.3 and XI.9.3]{Dugundji}.
	Thus $C_\Cmp(X)=C(X)$ is a complete locally convex topological vector space.
	
	\item If $\fS=\Bnd$ is the collection of all bounded sets, then
	$\mathcal T_\Bnd$ is the topology of uniform convergence on bounded sets, and 
	$C_\Bnd(X)\subseteq C(X)$ is the space 
	of all continuous functions which are bounded on all bounded sets. 
	If we fix a base point $o\in X$, then the set $\{\bar B_{2^k}(o)\mid k\in\NN\}$ is 
	cofinal in $\Bnd$ and $\{N_{\bar B_{2^k}(o),2^{-\ell}}(\phi)\mid k,\ell\in\NN\}$ is a countable 
	neighborhood basis of $\phi\in C_\Bnd(X)$. In particular, we may work with sequences in this space if we wish so.
\end{enumerate}
We note that the natural maps
\[
(C_\Fin(X),\mathcal T_\Fin)\longleftarrow
(C_\Cmp(X),\mathcal T_\Cmp)\longleftarrow
(C_\Bnd(X),\mathcal T_\Bnd)\longleftarrow
(C_\Uni(X),\mathcal T_\Uni)
\]
are continuous, because we have inclusions
\[\Fin\subseteq\Cmp\subseteq\Bnd\subseteq\Uni.\] 
If $X$ is proper, then $\mathcal T_\Bnd=\mathcal T_\Cmp$ and if $X$ is discrete, then $\mathcal T_\Fin=\mathcal T_\Cmp$.
In general, all four spaces and topologies are different.
Now let $o\in X$ be a base point and put 
\[
I_{\fS,o}(X)=\{\phi\in C_{\fS}(X)\mid\phi(o)=0\}.
\]
This is the kernel of the evaluation map at $o$,
\[
C_\fS(X)\longrightarrow\RR, \quad \phi\longmapsto \phi(o),
\]
and hence a maximal ideal in the ring $C_{\fS}(X)$.
The evaluation map at $o$ is continuous, hence $I_{\fS,o}(X)\subseteq C_\fS(X)$ is a closed hyperplane. 
There is a continuous linear projector 
\[
\pr_o:C_{\fS}(X)\longmapsto I_{\fS,o}(X),\quad \phi\longmapsto \phi-\phi(o).
\]
The kernel of $\pr_o$ is the subring of $C_\fS(X)$ consisting of all constant
real functions on $X$, which we identify with $\RR$. 
Therefore $C_{\fS}(X)$ splits as a topological vector space as
\[
C_{\fS}(X)=\RR\oplus I_{\fS,o}(X).
\]
To see this, we note that the natural homomorphism $\RR\oplus I_{\fS,o}(X)\longrightarrow C_\fS(X)$
is continuous and bijective. Its inverse is the map $\phi\longmapsto(\phi(o),\pr_o(\phi))$, which is also 
continuous.

It follows from the diagram 
\[
\begin{tikzcd}
C_{\fS}(X) \arrow{r}[yshift=.5ex]{\inc} \arrow{d} & I_{\fS,o}(X) \arrow[shift right]{l}[yshift=-.6ex]{\pr_o}\\
C_{\fS}(X)/\RR \arrow{ru}
\end{tikzcd}
\]
that there is a natural isomorphism of topological vector spaces
\[
C_{\fS}(X)/\RR\cong I_{\fS,o}(X)
\]
that maps $\phi+\RR$ to $\phi-\phi(o)$. In particular, there is an isomorphism of topological vector
spaces
\[
I_{\fS,o}(X)\cong I_{\fS,p}(X)
\]
for all $o,p\in X$.
We also recall Ascoli's Theorem.
\begin{Thm}[Ascoli's Theorem]\label{Ascoli}
	Assume $F\subseteq C_\Cmp(X)$ is equicontinuous and that 
	for each $p\in X$ the set $F(p)=\{\phi(p)\mid\phi\in F\}\subseteq\RR$ is bounded. Then 
	$F$ has compact closure $\overline F$ in $C_\Cmp(X)$ with respect to the compact-open topology $\mathcal T_\Cmp$. \newline
	This closure $\overline F$ coincides (set-theoretically and topologically) with the closure of $F$ in $C_\Fin(X)=X^\RR$ with respect to the topology $\mathcal T_\Fin$. 
\end{Thm}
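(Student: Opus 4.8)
The plan is to reduce the statement to two facts: (a) the $\mathcal T_\Fin$-closure $G$ of $F$ in $X^\RR=C_\Fin(X)$ is a compact, equicontinuous family of continuous functions, and (b) on any equicontinuous family the subspace topologies induced by $\mathcal T_\Fin$ and $\mathcal T_\Cmp$ coincide. For (a), fix $p\in X$ and $\eps>0$; equicontinuity of $F$ gives $\delta>0$ with $|\phi(q)-\phi(p)|<\eps$ for all $q\in B_\delta(p)$ and all $\phi\in F$. For each fixed $q$ this is a $\mathcal T_\Fin$-closed condition on functions, so it survives in the closure $G$ with the inequality relaxed to $\leq$; hence $G$ is again equicontinuous, and in particular $G\subseteq C(X)=C_\Cmp(X)$. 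Moreover $G(p)\subseteq\overline{F(p)}$, which is compact in $\RR$ since $F(p)$ is bounded, so $G$ is a closed subset of the product $\prod_{p\in X}\overline{F(p)}$; the latter is compact by Tychonoff's theorem, whence $G$ is $\mathcal T_\Fin$-compact.

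For (b), let $H\subseteq C_\Cmp(X)$ be equicontinuous. As $\mathcal T_\Fin\subseteq\mathcal T_\Cmp$, it suffices to show that every basic $\mathcal T_\Cmp$-neighborhood $N_{K,\eps}(\phi_0)$ of a point $\phi_0\in H$, with $K\subseteq X$ compact, contains, relative to $H$, a $\mathcal T_\Fin$-neighborhood of $\phi_0$. Using equicontinuity of $H$, choose for each $p\in K$ a radius $\delta_p>0$ with $|\chi(q)-\chi(p)|<\eps/3$ whenever $q\in B_{\delta_p}(p)$ and $\chi\in H$, and extract a finite subcover $B_{\delta_{p_1}}(p_1),\dots,B_{\delta_{p_n}}(p_n)$ of $K$. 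If $\psi\in H$ satisfies $|\psi(p_i)-\phi_0(p_i)|<\eps/3$ for $i=1,\dots,n$, then for any $q\in K$, picking $i$ with $q\in B_{\delta_{p_i}}(p_i)$ and applying the triangle inequality three times gives $|\psi(q)-\phi_0(q)|<\eps$. Hence $N_{\{p_1,\dots,p_n\},\,\eps/3}(\phi_0)\cap H\subseteq N_{K,\eps}(\phi_0)\cap H$, proving (b).

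Finally I would combine the two ingredients. By (b) applied to $H=G$, the set $G$ is also $\mathcal T_\Cmp$-compact, hence $\mathcal T_\Cmp$-closed in $C_\Cmp(X)$ (which is Hausdorff, being finer than the product topology); as $\mathcal T_\Fin\subseteq\mathcal T_\Cmp$ and $F\subseteq G$, this gives $\overline F^{\,\mathcal T_\Cmp}\subseteq G$. Conversely $F$ is $\mathcal T_\Fin$-dense in $G$, hence $\mathcal T_\Cmp$-dense in $G$ by (b), and together with $\overline F^{\,\mathcal T_\Cmp}\subseteq G$ this forces $\overline F^{\,\mathcal T_\Cmp}=G$. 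Thus the two closures coincide as sets, they carry the same subspace topology by (b), and this common space is compact. I expect the only substantive point to be (b), the comparison of pointwise and compact convergence on equicontinuous families; everything else is Tychonoff's theorem together with routine bookkeeping with closures between a coarser and a finer topology.
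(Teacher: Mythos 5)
Your proof is correct, but it takes a genuinely different route from the paper. The paper defers the first claim (that the equicontinuous, pointwise-bounded family $F$ has $\mathcal T_\Cmp$-compact closure) to a textbook reference, and then obtains the second claim in one line: since $\overline F$ is $\mathcal T_\Cmp$-compact and the inclusion $C_\Cmp(X)\to C_\Fin(X)$ is a continuous injection into a Hausdorff space, its restriction to $\overline F$ is a closed embedding, so the $\mathcal T_\Cmp$-closure sits inside $C_\Fin(X)$ as a closed homeomorphic copy, and the $\mathcal T_\Fin$-closure of $F$ must coincide with it. You instead reprove Ascoli from scratch: you show via Tychonoff that the $\mathcal T_\Fin$-closure $G$ of $F$ in the product is compact and still equicontinuous, then prove the key lemma that on any equicontinuous family the $\mathcal T_\Fin$ and $\mathcal T_\Cmp$ subspace topologies agree (this is the heart of the classical proof), and finally bookkeep with closures to identify $G$ with $\overline F^{\,\mathcal T_\Cmp}$. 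Your decomposition starts from the pointwise side and moves up, whereas the paper starts from the compact-open side (by citation) and drops down; your version is self-contained and exposes exactly where equicontinuity and pointwise boundedness enter, while the paper's is more economical and isolates the genuinely new observation (the closed-embedding argument) from the textbook material. Both are sound; yours is longer because it internalizes what the paper outsources to Dugundji.
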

\begin{proof}
	The first claim is classical, see \cite[XII.6.4]{Dugundji}. 
	Since the closure $\overline F\subseteq C_\Cmp(X)$ is compact, the continuous 
	injection \[C_\Cmp(X)\longrightarrow C_\Fin(X)\] restricts
	to a closed embedding on $\overline F$.
\end{proof}
The isometry group $\Isom(X)$ acts in a natural way from the left on $C_\fS(X)$, for $\fS=\Fin,\Cmp,\Bnd$.
This action fixes the subring $\RR\subseteq C_\fS(X)$ of constant functions pointwise, and from this we
get an induced left action on $C_\fS(X)/\RR$.
If we put
\[
(g\psi)(x)=\psi(g^{-1}(x))-\psi(g^{-1}(o)),
\]
for $g\in\Isom(X)$ and $\psi\in I_{\fS,o}(X)$, then the diagram 
\[
\begin{tikzcd}
C_{\fS}(X) \arrow{r}{\pr_o} \arrow{d} & I_{\fS,o}(X) \\
C_{\fS}(X)/\RR \arrow{ru}
\end{tikzcd}
\]
is $\Isom(X)$-equivariant.

Suppose that $A\subseteq X$ is a closed subset.
We put $\fS|A=\{Q\cap A\mid Q\in\fS\}$.
For $\fS=\Fin,\Cmp,\Bnd$, the set $\fS|A$ is the set of finite/compact/bounded subsets of $A$.
We have a natural continuous restriction homomorphism of topological vector spaces 
\[
C_\fS(X)\longrightarrow C_{\fS|A}(A)
\]
that maps a function $\phi$ to its restriction $\phi|A$.
In the commutative diagram
\[
\begin{tikzcd}
C_\fS(X)\arrow{r} \arrow{d}{\pr} \arrow{dr} & C_{\fS|A}(A) \arrow{d} \\
C_\fS(X)/\RR \arrow{r}{r_A} & C_{\fS|A}(A)/\RR,
\end{tikzcd}
\]
the map $\pr$ is open and thus $r_A$ is a continuous homomorphism.


\section{Asymmetric norms and asymmetric metrics}\label{sec:metrics-norms}

\begin{Def}\label{asmetricdef}
	Let $X$ be a set. An \emph{asymmetric metric} on $X$ is a map $\delta:X\times X\longrightarrow \RR$
	such that the following hold for all $u,v,w\in X$.
	\begin{enumerate}[(i)]
		\item $\delta(u,v)\geq 0$.
		\item $\delta(u,v)=0$ if and only if $u=v$.
		\item $\delta(u,w)\leq \delta(u,v)+\delta(v,w)$.
	\end{enumerate}
\end{Def}
In contrast to a metric we do not require that $\delta(u,v)=\delta(v,u)$.
Thus, every metric is in particular an asymmetric metric.
If $\delta$ is an asymmetric metric on $X$, we put
\[
\mathrm{Isom}_\delta(X)=\{g\in\mathrm{Isom}(X)\mid \delta(g(u),g(v))=\delta(u,v)\text{ for all }u,v\in X\}.
\]
In a similar vein, we may define asymmetric norms. 
\begin{Def}\label{asnormdef}
	An \emph{asymmetric norm} on a real vector space $V$  is a map $\nu:V\longrightarrow\RR$ such that
	the following hold for all $u,v\in V$ and all $r\geq0$.
	\begin{enumerate}[(i)]
		\item $\nu(u)\geq0$.
		\item $\nu(u)=0$ if and only if $u=0$.
		\item $\nu(ru)=r\nu(u)$.
		\item $\nu(u+v)\leq \nu(u)+\nu(v)$.
	\end{enumerate}
\end{Def}
In contrast to a norm, we do not require that $\nu(u)=\nu(-u)$. Thus, every norm is also an asymmetric norm. If $\nu$ is an asymmetric norm, then 
$\overline{\nu}(u):=\max\{\nu(u),\nu(-u)\}$ is a (symmetric) norm.
\begin{Rem}	
	Any asymmetric norm $\nu$ induces an asymmetric metric $\delta$ via $\delta(u,v)=\nu(u-v)$.
	Indeed, we have for $u,v,w\in V$ that
	\[
	\delta(u,w)=\nu(u-w)=\nu((u-v)+(v-w))\leq \nu(u-v)+\nu(v-w)=\delta(u,v)+\delta(v,w).
	\]
\end{Rem}
\begin{Lem}\label{LipschitzLemma}
	Let $(V,||.||)$ be a normed real vector space (not necessarily finite dimensional) and assume that $B\subseteq V$ is a closed convex, bounded neighborhood of
	$0$. Put 
	\[
	\nu(u)=\inf\{\lambda\geq 0\mid u\in \lambda B\}.
	\]
	Then $\nu$ is an asymmetric norm, with unit ball $B$. 
	Moreover, there exist real constants $\alpha,\beta>0$  such that \[
	||u||\leq\alpha\nu(u)
	\text{ and }
	\nu(u)\leq\beta||u||
	\] hold for all $u\in V$.
\end{Lem}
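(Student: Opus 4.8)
The plan is to recognize $\nu$ as the Minkowski gauge of $B$ and to verify the four axioms, getting the bi-Lipschitz bounds almost for free from the fact that a bounded neighborhood of the origin is squeezed between two metric balls. So the first step is to fix $\sigma>0$ with $\bar B_\sigma(0)\subseteq B$ (this uses that $B$ is a neighborhood of $0$) and $\rho>0$ with $B\subseteq\bar B_\rho(0)$ (this uses that $B$ is bounded). Since for every $u\in V$ we have $u\in\bar B_{||u||}(0)=(||u||/\sigma)\,\bar B_\sigma(0)\subseteq(||u||/\sigma)\,B$, the set $\{\lambda\ge 0\mid u\in\lambda B\}$ is a nonempty subset of $[0,\infty)$; hence $\nu(u)$ is a well-defined nonnegative real, which is axiom~(i), and $\nu(u)\le\sigma^{-1}||u||$. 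On the other hand $u\in\lambda B$ implies $u\in\lambda\bar B_\rho(0)=\bar B_{\lambda\rho}(0)$, so $||u||\le\lambda\rho$; taking the infimum over admissible $\lambda$ gives $||u||\le\rho\,\nu(u)$. Thus the final assertion of the lemma holds with $\alpha=\rho$ and $\beta=\sigma^{-1}$; in particular $\nu(u)=0$ forces $u=0$, and $\nu(0)=0$ because $0\in B$, which is axiom~(ii).

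The next step is positive homogeneity and subadditivity. For $r>0$ the equivalence $u\in\lambda B\iff ru\in r\lambda B$ shows that multiplication by $r$ is a bijection between the two index sets, so $\nu(ru)=r\nu(u)$ (and $r=0$ reduces to $\nu(0)=0$); this is axiom~(iii). For axiom~(iv) the key input is convexity in the form $\lambda B+\mu B=(\lambda+\mu)B$ for all $\lambda,\mu\ge 0$: if $u\in\lambda B$ and $v\in\mu B$, then $u+v\in(\lambda+\mu)B$, hence $\nu(u+v)\le\lambda+\mu$, and taking infima over $\lambda$ and $\mu$ separately yields $\nu(u+v)\le\nu(u)+\nu(v)$. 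At this point $\nu$ is an asymmetric norm.

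Finally one has to identify the unit ball $\{u\in V\mid\nu(u)\le 1\}$ with $B$. The inclusion $B\subseteq\{u\mid\nu(u)\le 1\}$ is clear, since $u\in B=1\cdot B$ gives $\nu(u)\le 1$. For the reverse inclusion, suppose $\nu(u)\le 1$; given $\lambda>1$, the definition of the infimum provides $\lambda'$ with $\nu(u)\le\lambda'<\lambda$ and $u\in\lambda' B$, and since $0\in B$ and $B$ is convex one has $tB\subseteq B$ for $t\in[0,1]$, whence $\lambda' B\subseteq\lambda B$ and so $\lambda^{-1}u\in B$. Letting $\lambda\to 1^{+}$ and using that $B$ is closed, we get $u\in B$. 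This finishes the proof.

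I do not expect any genuine obstacle here; the proof is routine. The only points that need a little care are the two places where the shape hypotheses on $B$ are actually used: convexity, through the identities $\lambda B+\mu B=(\lambda+\mu)B$ and $tB\subseteq B$ for $t\in[0,1]$, and closedness of $B$, in the last step, which cannot be dropped (an open bounded convex neighborhood of $0$ has the same gauge but is not its own unit ball).
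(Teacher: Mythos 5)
Your proof is correct and follows essentially the same route as the paper's: sandwich $B$ between two $\|\cdot\|$-balls to obtain the bi-Lipschitz constants (and axiom (ii) as a byproduct), then verify homogeneity from the bijection $\lambda\mapsto r\lambda$ and subadditivity from convexity via $\lambda B+\mu B\subseteq(\lambda+\mu)B$. You also spell out the identification of the unit ball $\{u\mid\nu(u)\le1\}$ with $B$ using closedness, which the paper leaves implicit; that is a welcome extra.
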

\begin{proof} 
	It is clear from the definition that Condition (i) from Definition~\ref{asnormdef} holds.
	Since $B$ is bounded, there exists $\alpha>0$ such that the closed ball $\bar B_\alpha(0)$ of $||.||$-radius $\alpha$
	around $0$ contains $B$.
	Since \[r(1+\eps)B\subseteq r(1+\eps)\bar B_\alpha(0)=\bar B_{r(1+\eps)\alpha}(0)\] holds for all $r\geq0$ and all $\eps>0$,
	we have $||u||\leq \alpha r$ if $\nu(u)\leq r$. This shows (ii), and also that $||u||\leq\alpha\nu(u)$.
	If $r>0$, then $ru\in\lambda B$ if and only if $u \in \frac{\lambda}{r} B$.  This shows (iii).  
	For $s,t>0$ the convexity of $B$ implies that $s B+t B\subseteq(s+t)B$.
	Suppose that $u,v\in V$ with $s=\nu(u)$ and $t=\nu(v)$.
	For all $\eps >0$ we have then $u\in(\eps+s)B$, $v\in(\eps+t)B$, whence
	$u+v\in(s+t+2\eps )B$. Therefore $\nu(u+v)\leq \nu(u)+\nu(v)+2\eps$. Since this
	holds for all $\eps>0$, we have (iv). 
	Since $B$ is a $0$-neighborhood, there exists $\beta>0$ such that 
	$\bar B_\frac{1}{\beta}(0)\subseteq B$.
	Then \[\bar B_{r(1+\eps)}(0)\subseteq \beta r(1+\eps)B\] for all $r\geq 0$ and $\eps>0$, and thus
	$||u||\leq r$ implies that $\nu(u)\leq \beta||u||$.
\end{proof}
In the converse direction we have the following.
\begin{Lem}\label{FiniteDimAsm}
	Let $\nu$ be an asymmetric norm on a finite dimensional real vector space $V$. Then there
	is a unique compact convex $0$-neighborhood $B\subseteq V$ such that 
	\[\nu(v)=\inf\{\lambda\geq 0\mid v\in \lambda B\}.\]
\end{Lem}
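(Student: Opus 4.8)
The plan is to take the candidate $B=\{v\in V\mid\nu(v)\le 1\}$ and to verify that it is a compact convex $0$-neighborhood whose Minkowski functional is $\nu$; the only nontrivial input is that finite-dimensionality forces $\nu$ to be comparable, in both directions, to an auxiliary norm.

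First I would fix any norm $\|\cdot\|$ on $V$ together with a basis $e_1,\dots,e_n$. Writing $v=\sum_i a_i e_i$ and using Conditions (iii), (iv) of \cref{asnormdef}, one has $\nu(v)\le\sum_i|a_i|\,\max\{\nu(e_i),\nu(-e_i)\}\le C\|v\|$ for a suitable $C>0$, since the coordinate functionals are $\|\cdot\|$-bounded. Two applications of the triangle inequality then give $|\nu(v)-\nu(w)|\le\max\{\nu(v-w),\nu(w-v)\}\le C\|v-w\|$, so $\nu$ is Lipschitz, in particular continuous. For the reverse estimate I would use that the $\|\cdot\|$-unit sphere $S$ is compact: $\nu$ attains a minimum $m$ on $S$, and $m>0$ by Condition (ii); homogeneity then yields $\nu(v)\ge m\|v\|$ for all $v\in V$.

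Granting the two bounds $m\|v\|\le\nu(v)\le C\|v\|$, the properties of $B$ follow formally: $B$ is convex since $\nu(tu+(1-t)v)\le t\nu(u)+(1-t)\nu(v)\le 1$ for $u,v\in B$ and $t\in[0,1]$; it is a $0$-neighborhood since it contains $\bar B_{1/C}(0)$; it is closed by continuity of $\nu$ and bounded since $B\subseteq\bar B_{1/m}(0)$, hence compact. For the Minkowski identity, for $\lambda>0$ we have $v\in\lambda B\iff\nu(\lambda^{-1}v)\le 1\iff\nu(v)\le\lambda$, so the set $\{\lambda\ge 0\mid v\in\lambda B\}$ equals $[\nu(v),\infty)$ if $v\ne 0$ and $[0,\infty)$ if $v=0$; in either case its infimum is $\nu(v)$.

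For uniqueness, suppose $B'$ is any compact convex $0$-neighborhood with $\nu(v)=\inf\{\lambda\ge 0\mid v\in\lambda B'\}$. Then $B'$ satisfies the hypotheses of \cref{LipschitzLemma} (with the same auxiliary norm), and that lemma tells us that the Minkowski functional of $B'$ — which is $\nu$ by assumption — is an asymmetric norm with unit ball $B'$; hence $B'=\{v\in V\mid\nu(v)\le 1\}=B$. The main obstacle is purely the two-sided comparison with $\|\cdot\|$, which is exactly where $\dim V<\infty$ is used; the remaining steps are routine.
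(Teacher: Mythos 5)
Your proof is correct and follows essentially the same route as the paper's: both take $B=\{v\mid\nu(v)\le 1\}$, use a basis to get the upper estimate $\nu\le C\|\cdot\|$, and conclude compactness from a two-sided comparison with a reference norm. The only divergence is cosmetic: where you obtain the lower bound $\nu\ge m\|\cdot\|$ by letting the (now continuous) $\nu$ attain a positive minimum on the compact unit sphere, the paper instead argues boundedness of $B$ by contradiction via a convergent sequence of unit vectors; your version is the more standard phrasing of the same finite-dimensionality input.
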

\begin{proof}
	We put $B=\{u\in V\mid \nu(u)\leq 1\}$ and $m=\dim(V)$.
	From the definition of $B$ we have $\nu(u)=\inf\{\lambda\geq 0\mid u\in \lambda B\}$.
	By the triangle inequality for $\nu$, the set $B$ is convex.
	Let $e_1,\ldots,e_m$ be a basis for $V$, and put $r=\max_j\{\nu(e_j),\nu(-e_j)\}$.
	Then $B$ contains the convex hull of the $2m$ points $\pm\frac{1}{r}e_j$. In particular, $B$ is a
	convex neighborhood of $0$ in the standard topology of $V$. We claim that $B$ is bounded
	with respect to the Euclidean norm $||.||$ determined by the basis $e_1,\ldots,e_m$.
	Otherwise we find a $||.||$-convergent sequence $(u_k)_{k\geq 1}$ of $||.||$-unit vectors $u_k$
	such that $ku_k\in B$ holds for all $k\geq 1$. Here we use that 
	the closed unit ball $\bar B_1(0)$ is compact, because $V$ has finite dimension.
	We put $u=\lim_ku_k$ and we note that $u\neq0$,
	since $||u||=1$. Given $s>0$ and $k$ large enough, we have $su_k\in B$
	(because $B$ is convex)
	and $s(u-u_k)\in B$ (because $B$ is a $0$-neighborhood). Hence $\frac{s}{2}u\in B$ for all $s>0$.
	But then $\nu(u)=0$, a contradiction. Hence $B$ is bounded.
	Since $B$ is a $0$-neighborhood, it contains a ball $B_\eps(0)$, for some $\eps>0$.
	Therefore $\nu$ is $\frac{1}{\eps}$-Lipschitz and in particular continuous. Hence $B$ is closed and thus
	compact.
	If $A\subseteq V$ is a compact convex identity neighborhood 
	with $\nu(v)=\inf\{\lambda\geq 0\mid v\in \lambda A\}$, then 
	$A=\{u\in V\mid\nu(u) =1\}$ and thus $A=B$.
\end{proof}


\section{The asymmetric bordification}\label{sec:asymmetric-bordification}

In this section we introduce horofunction bordifications of metric spaces with respect to asymmetric metrics. We discuss conditions under which the space $X$ can be topologically embedded into its bordification.   

Throughout the section we assume that $(X,d)$ is a metric space and that $\delta:X\times X\longrightarrow\RR$ is an asymmetric metric on $X$ which is \emph{bi-Lipschitz} equivalent 
to $d$. That is, there exist real constants $\alpha,\beta>0$ such that the following condition (bL) holds:
\begin{itemize}
	\item[(bL)]
	for all $p,q\in X$, we have $d(p,q)\leq\alpha \delta(p,q)$ and $\delta(p,q)\leq\beta d(p,q)$.
\end{itemize}
For a finite dimensional Euclidean vector space every asymmetric norm has Property (bL) by Lemma~\ref{LipschitzLemma} and Lemma~\ref{FiniteDimAsm}.

\begin{Lem}
	Assume that $(X,d)$ is a metric space and that $\delta:X\times X\longrightarrow\RR$ is an asymmetric metric satisfying (bL). Then the map
	\[
	\iota_\fS:X\longrightarrow C_\fS(X), \qquad p\longmapsto \delta_p=\delta(p,-)
	\]
	is an embedding for $\fS=\Fin,\Cmp,\Bnd$.
\end{Lem}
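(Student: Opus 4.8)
The plan is to verify the three properties that together make $\iota_\fS$ a topological embedding: that it actually maps into $C_\fS(X)$, that it is injective and continuous, and that it is open onto its image. The only tools needed are the two bi-Lipschitz inequalities in (bl) and the triangle inequality of \cref{asmetricdef}; properness and any form of geodesy of $X$ play no role here.

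First I would check well-definedness and injectivity. Fix $p\in X$. Applying the triangle inequality of $\delta$ once in each direction to a pair $q,q'$ gives $|\delta(p,q)-\delta(p,q')|\le\max\{\delta(q,q'),\delta(q',q)\}\le\beta\,d(q,q')$, so $\delta_p$ is $\beta$-Lipschitz, hence continuous; and $\delta(p,q)\le\delta(p,o)+\delta(o,q)\le\delta(p,o)+\beta\,d(o,q)$ shows that $\delta_p$ is bounded on every $d$-bounded (in particular every finite or compact) set, so that $\delta_p\in C_\fS(X)$ for $\fS=\Fin,\Cmp,\Bnd$. Injectivity is immediate: if $\delta_p=\delta_q$, evaluating at $p$ gives $0=\delta(p,p)=\delta(q,p)$, whence $p=q$ by \cref{asmetricdef}(ii).

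Next, continuity. Given $p\in X$ and a basic neighborhood $N_{Q,\eps}(\delta_p)$ of $\iota_\fS(p)$ with $Q\in\fS$, I vary instead the first argument: the triangle inequality gives $|\delta(p',q)-\delta(p,q)|\le\max\{\delta(p,p'),\delta(p',p)\}\le\beta\,d(p,p')$ for every $q\in X$, so $d(p,p')<\eps/\beta$ already forces $\iota_\fS(p')\in N_{Q,\eps}(\delta_p)$. Since this estimate is uniform in $Q$, the map $\iota_\fS$ is continuous (indeed Lipschitz) for each of the three choices of $\fS$. For openness onto the image I would use (bl) in the other direction together with a well-chosen test set. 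For $p\in X$ take $Q=\{p\}$, a legitimate member of $\Fin\subseteq\Cmp\subseteq\Bnd$; then $N_{\{p\},\eps}(\delta_p)$ is a basic open set, and $\iota_\fS(p')\in N_{\{p\},\eps}(\delta_p)$ means $\delta(p',p)=|\delta(p',p)-\delta(p,p)|<\eps$, whence $d(p,p')=d(p',p)\le\alpha\,\delta(p',p)<\alpha\eps$ by the first inequality in (bl). Thus $\iota_\fS^{-1}\!\big(N_{\{p\},\eps}(\delta_p)\big)\subseteq B_{\alpha\eps}(p)$; since the balls $B_r(p)$ form a neighborhood basis at $p$ and $\iota_\fS$ is injective, this shows that $\iota_\fS$ sends each neighborhood of $p$ onto a relatively open neighborhood of $\iota_\fS(p)$ in $\iota_\fS(X)$. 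Combining this with injectivity and continuity, $\iota_\fS$ is a topological embedding.

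I do not expect a real obstacle here. The one step that rewards care is the openness argument: the point is to notice that a \emph{single} coordinate, namely the value $\delta(\cdot,p)$, already controls the $d$-distance to $p$ via (bl), so that $Q=\{p\}$ suffices — and that this choice of $Q$ lies in all three families $\fS$, so literally the same argument handles $\Fin$, $\Cmp$ and $\Bnd$ at once. (Note that $\fS=\Uni$ is genuinely excluded: for unbounded $X$ the function $\delta_p$ need not even belong to $C_\Uni(X)=BC(X)$.)
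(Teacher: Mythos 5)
Your proof is correct and uses essentially the same key observation as the paper: that the single evaluation at $p$ (i.e.\ the coordinate $Q=\{p\}$, which belongs to all three families $\fS$) already controls the $d$-distance to $p$ via the inequality $d(p,q)\leq\alpha\,\delta(p,q)$ from (bl). The only difference is logical packaging: the paper proves that for a closed $A$ with $p\notin A$ the image $\iota_\Fin(p)$ avoids the closure of $\iota_\Fin(A)$ (i.e.\ $\iota_\Fin$ is relatively closed), then pushes the conclusion up to $\Cmp$ and $\Bnd$ through the continuous injections between the function spaces, whereas you dualize this to an openness-onto-image statement and run it simultaneously for all three $\fS$ by keeping $Q=\{p\}\in\Fin\subseteq\Cmp\subseteq\Bnd$. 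Both formulations are equivalent and both rest on the same estimate, so this is the same proof up to a routine dualization.
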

\begin{proof}
	First of all we notice that
	\begin{equation}
	\label{InA} 
	\delta_p(x)-\delta_p(y)\leq \delta(y,x)\leq\beta d(x,y).
	\end{equation}
	Hence each $\delta_p$ is Lipschitz continuous and therefore contained
	in $C_\Bnd(X)$.
	For all $p,q,x\in X$ we have
	\[
	\delta_p(x)-\delta_q(x)\leq \delta(p,q)\leq \beta d(p,q).
	\]
	This shows that {the maps} $\iota_\Bnd, \iota_\Cmp$ and  $\iota_\Fin$ are continuous.
	Since $p$ is the unique minimum of $\delta_p$, these maps are injective.
	
	It suffices to show that $\iota_\Fin$ is an embedding. This will imply that
	$\iota_\Bnd$ and $\iota_\Cmp$ are also embeddings.
	Suppose that $A\subseteq X$ is closed and that $p\in X-A$. Then there exists $\eps>0$
	such that $\bar B_\eps(p)\cap A=\emptyset$. For $y\in A$ we have that
	\[\delta_y(p)\geq\frac1\alpha d(y,p)\geq\frac1\alpha\eps.\] 
	Since the evaluation map 
	$\phi\longmapsto\phi(p)$ is continuous on $C_\Fin(X)$ and since $\delta_p(p)=0$,
	we see that $\iota_\Fin(p)\not\in\overline{\iota_\Fin(A)}$
	(where the closure is taken with respect to $\mathcal T_\Fin$).
	Thus $\iota_\Fin$ is an embedding.
\end{proof}

\begin{Cor}
	For every $o\in X$ the map 
	\[\iota_{\fS,o}:X\longrightarrow I_{\fS,o}(X), \qquad p\longmapsto \delta_p-\delta_p(o)\]
	is a continuous injection, with respect to $\fS=\Fin,\Cmp$ and $\Bnd$.
\end{Cor}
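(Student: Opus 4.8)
The plan is to realize $\iota_{\fS,o}$ as a composite of two maps that are already under control. By definition $\iota_{\fS,o}=\pr_o\circ\iota_\fS$, where $\iota_\fS:X\to C_\fS(X)$, $p\mapsto\delta_p$, is the map of the preceding Lemma and $\pr_o:C_\fS(X)\to I_{\fS,o}(X)$, $\phi\mapsto\phi-\phi(o)$, is the continuous linear projector from \cref{sec:topology}. The preceding Lemma tells us that $\iota_\fS$ is continuous for $\fS=\Fin,\Cmp,\Bnd$ (in fact an embedding), and $\pr_o$ is continuous, so $\iota_{\fS,o}$ is continuous. Equivalently, using the topological isomorphism $C_\fS(X)/\RR\cong I_{\fS,o}(X)$ recorded above, $\iota_{\fS,o}$ is the Gromov-type map $p\mapsto\delta_p+\RR$ followed by an isomorphism of topological vector spaces, so continuity is inherited from that of $\iota_\fS$ and of the quotient map.

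For injectivity I would argue straight from the axioms of an asymmetric metric. Suppose $\iota_{\fS,o}(p)=\iota_{\fS,o}(q)$, that is, $\delta(p,x)-\delta(p,o)=\delta(q,x)-\delta(q,o)$ for all $x\in X$. Putting $x=p$ gives $\delta(q,o)-\delta(p,o)=\delta(q,p)$, and putting $x=q$ gives $\delta(p,o)-\delta(q,o)=\delta(p,q)$. Summing these two identities yields $\delta(p,q)+\delta(q,p)=0$, and since both terms are nonnegative by Definition~\ref{asmetricdef}(i), each vanishes; then Definition~\ref{asmetricdef}(ii) forces $p=q$. One may also phrase this as: $p$ is the unique minimizer of $\delta_p$, hence of $\delta_p-\delta_p(o)$, so the functions $\iota_{\fS,o}(p)$ and $\iota_{\fS,o}(q)$ can coincide only if $p=q$.

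I expect no genuine obstacle here: the statement is an immediate corollary of the Lemma together with the availability of the continuous projector $\pr_o$. The one point I would flag is that the corollary deliberately asserts only a continuous injection, not a topological embedding. Even though $\iota_\fS$ is an embedding under hypothesis (bl) alone, the projector $\pr_o$ need not restrict to a homeomorphism on $\iota_\fS(X)$: the separation argument used for $\iota_\fS$ relied on $\delta_p(p)=0$ for a \emph{fixed} $p$, whereas $\iota_{\fS,o}(p)(p)=-\delta(p,o)$ varies with $p$ and is not in general bounded away from the values $\iota_{\fS,o}(y)(p)$ as $y$ ranges over an unbounded closed set. Upgrading to an embedding is precisely the role of the additional weak-geodesic hypothesis on $\delta$ mentioned in the introduction, which is not assumed at this stage.
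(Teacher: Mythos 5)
Your proof is correct and matches the paper's: continuity via the composite $\pr_o\circ\iota_\fS$, and injectivity because $p$ is the unique minimizer of $\delta_p-\delta_p(o)$ (your direct computation $\delta(p,q)+\delta(q,p)=0$ is an equivalent unpacking of that fact). The closing remark correctly anticipates the paper's subsequent example showing that $\iota_{\fS,o}$ need not be an embedding without a further hypothesis.
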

\begin{proof}
	The map $\delta_p-\delta_p(o)$ has a unique minimum at the point $p$. Hence $\iota_{\fS,o}$
	is injective.
	The map $\iota_{\fS,o}$ is the composite of the continuous map 
	$\iota_{\fS}$
	and of the continuous projector 
	${\pr_o:C_\fS(X)\longrightarrow I_{\fS,o}(X)}$ and hence continuous.
\end{proof}

\begin{Rem}
	Contrary to claims made in the literature (eg. \cite[p.~268]{BH}) the map $\iota_{\fS,o}$ need \emph{not} be an embedding
	with respect to $\fS=\Fin,\Cmp$ or $ \Bnd$, even if $X$ is proper.
	For an example, put $X=\mathbb N$, with the metric 
	\[
	d(k,\ell)=
	\begin{cases}
	0&\text{for }k=\ell \\
	k+\ell&\text{else}.
	\end{cases}
	\]
	Then $(\mathbb N,d)$ is a discrete proper metric space, whence
	$\Fin=\Cmp=\Bnd$.
	Put $\delta=d$. For $o=0$ we have
	\[\iota_{\Fin,o}(k)(\ell)=(\ell+k)-k=\ell=\iota_{\Fin,o}(0)(\ell)\] 
	for all $\ell\neq k$. It follows that 
	the sequence $(\iota_{\Fin,o}(k))_{k\geq 0}$ converges pointwise to $\iota_{\Fin,o}(0)$.
	In particular, the image \[\iota_{\Fin,o}(\mathbb N)\subseteq I_{\Fin,o}(X)\] is not discrete.
\end{Rem}

We need a geometric condition on $X$ that ensures that $\iota_{\fS,o}$ is an embedding.

\begin{Def}
	We say that an asymmetric metric $\delta$ on a set $X$ satisfies the \emph{interval condition}, or has \emph{Property~(ic)} if 
	the following holds:
	\begin{itemize}
		\item[(ic)] for all $p,q\in X$ and $s\in[0,\delta(p,q)]$, there is $z\in X$ such that $\delta(p,q)=\delta(p,z)+\delta(z,q)$ and $\delta(p,z)=s$.
	\end{itemize}
\end{Def}

Every asymmetric metric induced by an asymmetric norm on a vector space has Property~(ic).
Also, every geodesic metric space $(X,d)$ has this property for $\delta=d$.

\begin{Prop}
	Assume that $(X,d)$ is a metric space and that $\delta$ is an asymmetric
	metric on $X$ satisfying (bL) and (ic). 
	Then the map \[\tilde\iota_\Bnd:X\longrightarrow C_\Bnd(X)/\RR,\qquad p\longmapsto \delta_p+\RR\]
	is a topological embedding.
\end{Prop}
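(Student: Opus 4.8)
The plan is to show that $\tilde\iota_\Bnd$ is injective, continuous, and open onto its image. Injectivity and continuity are essentially already in hand: the previous Corollary tells us $\iota_{\Bnd,o}:p\mapsto\delta_p-\delta_p(o)$ is a continuous injection into $I_{\Bnd,o}(X)$, and under the topological isomorphism $C_\Bnd(X)/\RR\cong I_{\Bnd,o}(X)$ (sending $\phi+\RR$ to $\phi-\phi(o)$) the map $\tilde\iota_\Bnd$ is identified with $\iota_{\Bnd,o}$. So the real content is that the inverse map $\tilde\iota_\Bnd(X)\to X$ is continuous, i.e. that $\tilde\iota_\Bnd$ is an embedding; equivalently, that whenever $\tilde\iota_\Bnd(p_i)$ gets close to $\tilde\iota_\Bnd(p)$ in the seminorms $N_{\bar B_{2^k}(o),\eps}$, the points $p_i$ get close to $p$ in $d$.

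The key step is to exploit Property (ic) to turn closeness of the cosets $\delta_p+\RR$ into closeness of the points. Fix $o\in X$ and work with representatives $\delta_p-\delta_p(o)\in I_{\Bnd,o}(X)$. Suppose $p\in X$ and $\eps>0$ are given; I want a basic neighborhood $N_{\bar B_R(o),\eta}\big(\delta_p-\delta_p(o)\big)$ whose preimage under $\iota_{\Bnd,o}$ is contained in $B_\eps(p)$. Choose $R$ large enough that $\bar B_{2\eps}(p)\subseteq \bar B_R(o)$ and, crucially, apply (ic) to the pair $(q,p)$ for a candidate point $q$ near-but-not-in $B_\eps(p)$: there is a point $z$ on a $\delta$-geodesic from $q$ to $p$ with $\delta(q,z)$ equal to a controlled small value $s$, and $\delta(q,p)=\delta(q,z)+\delta(z,p)$. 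Evaluating the function $\delta_q-\delta_q(o)$ at such a $z$ (which lies in a fixed bounded set once $q$ ranges over a bounded set, using (bl)) gives $(\delta_q-\delta_q(o))(z)=\delta(q,z)-\delta(q,o)=s-\delta(q,o)$, and comparing with $(\delta_p-\delta_p(o))(z)=\delta(p,z)-\delta(p,o)$ forces a lower bound on the distance between the two functions in the seminorm over $\bar B_R(o)$ when $d(p,q)\geq\eps$. The bi-Lipschitz bound (bl) converts the $\delta$-estimates into $d$-estimates, which is what pins the value of $R$ and $\eta$ independently of $q$.

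Concretely, the estimate $\delta_p(x)-\delta_p(y)\leq\delta(y,x)\leq\beta d(x,y)$ from \eqref{InA} shows each $\delta_p$ is $\beta$-Lipschitz, so all the functions in play are uniformly Lipschitz and bounded on $\bar B_R(o)$; this is what makes the seminorm comparison on a single large ball $\bar B_R(o)$ detect separation. The point $p$ is the unique minimum of $\delta_p-\delta_p(o)$ with value $-\delta_p(o)$; for $q$ with $d(p,q)\geq\eps$, using (ic) to produce a witness point $z$ with $\delta(p,z)=\min\{\tfrac1\beta\cdot\tfrac\eps{2\alpha},\ \delta(p,q)\}$ and $\delta(p,q)=\delta(p,z)+\delta(z,q)$, one evaluates both functions at $z$ and at $p$ and obtains a definite gap of size bounded below in terms of $\eps,\alpha,\beta$ only. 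Choosing $\eta$ below that gap and $R$ large enough to contain all these witness points finishes the argument.

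The main obstacle I expect is bookkeeping with the asymmetry: since $\delta(u,v)\neq\delta(v,u)$ in general, one must be careful about which argument of $\delta$ the variable sits in — the witness point $z$ supplied by (ic) lies on a geodesic from $p$ to $q$ (or $q$ to $p$), and the relevant evaluations are $\delta_q(z)=\delta(q,z)$, not $\delta(z,q)$, so one has to check that the inequalities point the right way. Equally, one must verify that the witness points $z$, as $q$ ranges over the relevant bounded region, stay inside a fixed bounded set so that a single seminorm $N_{\bar B_R(o),\eta}$ does the job — this is where (bl) and a uniform diameter bound on $\delta$-geodesic intervals between bounded points are used. Once these asymmetry sign issues are handled, the remaining estimates are routine triangle-inequality manipulations.
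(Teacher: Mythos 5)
Your overall strategy is the same as the paper's: identify $C_\Bnd(X)/\RR$ with $I_{\Bnd,o}(X)$, produce via (ic) a witness point $z$ near $p$ in a bounded ball, and use (bl) to convert between $d$ and $\delta$ to get a definite separation. (The paper phrases this as ``$\tilde\iota_\Bnd(p)$ is not in the closure of $\tilde\iota_\Bnd(A)$ for $A$ closed, $p\notin A$,'' and normalizes at $p$ rather than at $o$, but that is cosmetic.) However, the specific application of (ic) in your ``Concretely'' paragraph does not produce the gap you claim, and this is precisely the asymmetry sign issue you flag at the end but do not in fact resolve.

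You apply (ic) to the ordered pair $(p,q)$, obtaining $z$ with $\delta(p,q)=\delta(p,z)+\delta(z,q)$ and $\delta(p,z)$ small. But this controls $\delta(z,q)$, whereas to evaluate $\delta_q$ at $z$ you need $\delta(q,z)$, and by asymmetry the triangle inequality only gives $\delta(q,z)\le\delta(q,p)+\delta(p,z)$. Writing $g=(\delta_p-\delta_p(o))-(\delta_q-\delta_q(o))$, one computes
\[
g(z)-g(p)=\delta(p,z)-\delta(q,z)+\delta(q,p)\;\ge\;0,
\]
with no lower bound strictly above $0$: the gap can vanish. The correct move is the paper's: apply (ic) to the pair $(q,p)$, so that $\delta(q,p)=\delta(q,z)+\delta(z,p)$, and choose $\delta(z,p)=\tfrac{2\eps}{\alpha}$ (with the case $d(p,q)\le 2\eps$ handled separately by taking $z=q$). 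Then $d(z,p)\le\alpha\,\delta(z,p)=2\eps$ keeps $z$ in a fixed bounded ball, and the telescoping $\delta(q,z)=\delta(q,p)-\delta(z,p)$ is exactly what makes the evaluation compute:
\[
g(z)-g(p)=\delta(p,z)+\delta(z,p)\;\ge\;\tfrac{1}{\alpha\beta}\,\delta(z,p)+\delta(z,p),
\]
using $\delta(p,z)\ge\tfrac1\alpha d(p,z)=\tfrac1\alpha d(z,p)\ge\tfrac1{\alpha\beta}\delta(z,p)$, yielding the paper's bound $\min\{\tfrac1\alpha\eps,\tfrac2{\alpha^2\beta}\eps\}$. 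Once you flip the direction of (ic) in this way, the rest of your sketch goes through as you describe.
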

\begin{proof}
	Being the composite $X\xrightarrow{\ \Bnd\ } C_\Bnd(X)\longrightarrow C_\Bnd(X)/\RR$,
	the map $\tilde\iota_\Bnd$ is continuous.
	Suppose that $A\subseteq X$ is closed and that $p\in X-A$.
	We claim that $\tilde\iota_\Bnd(p)$ is not in the closure of $\tilde\iota_\Bnd(A)$.
	For this it suffices to show that $\iota_{\Bnd,p}(p)$ is not in the closure of $\iota_{\Bnd,p}(A)$
	in $I_{\Bnd,p}(X)\cong C_\Bnd(X)/\RR$. This is what we will show.
	
	There exists $\eps>0$ such that $\bar B_\eps(p)\cap A=\emptyset$.
	We claim that for every $y\in A$ there exists a point $z$ in the bounded
	set $\bar B_{2\eps}(p)$ such that 
	\[\textstyle
	|\iota_{\Bnd,p}(p)(z)-\iota_{\Bnd,p}(y)(z)|\geq 
	\min\left\{\frac1\alpha\eps,\frac2{\alpha^2\beta}\eps\right\}.\]
	This will show that $\iota_{\Bnd,p}(p)$ is not in the closure
	of $\iota_{\Bnd,p}(A)$.
	In order to prove the claim, let $y\in A$. 
	If $d(p,y)\leq 2\eps$, we put $z=y$. 
	Then \[\iota_{\Bnd,p}(y)(z)=\delta_y(z)-\delta_y(p)=-\delta_y(p)\leq0,\] and 
	\[\textstyle
	\iota_{\Bnd,p}(p)(z)=\delta_p(z)-\delta_p(p)=\delta(p,z)\geq\frac1\alpha\eps,
	\]
	whence 
	\[\textstyle
	|\iota_{\Bnd,p}(p)(z)-\iota_{\Bnd,p}(y)(z)|\geq 
	\frac1\alpha\eps
	\]
	in this case.
	If $d(p,y)>2\eps$, then $\delta_y(p)\geq \frac2\alpha \eps$.
	By (ic) we can find a point $z\in X$ with
	$\delta_y(z)+\delta_z(p)=\delta_y(p)$ and with $\delta_z(p)=\frac2\alpha\eps$.
	Then $d(p,z)\leq 2\eps$ and 
	\[\iota_{\Bnd,p}(y)(z)=\delta_y(z)-\delta_y(p)=-\delta_z(p)\leq0,\]
	while
	\[\textstyle
	\iota_{\Bnd,p}(p)(z)=\delta(p,z)-\delta(p,p)\geq\frac1\alpha d(p,z)=\frac1\alpha d(z,p)\geq \frac{1}{\alpha\beta} \delta(z,p)
	=\frac{2}{\alpha^2\beta}\eps.
	\]
	Hence
	\[\textstyle
	|\iota_{\Bnd,p}(p)(z)-\iota_{\Bnd,p}(y)(z)|\geq 
	\frac2{\alpha^2\beta}\eps
	\]
	in this case.
\end{proof}

\begin{Prop}\label{Compactness}
	Assume that $(X,d)$ is a metric space and that $\delta$ is an asymmetric
	metric on $X$ satisfying (bL). Then $\iota_{\Cmp,o}(X)$
	has compact closure in $I_{\Cmp,o}(X)$, and 
	the same set is also the closure of $\iota_{\Fin,o}(X)$ in $I_{\Fin,o}(X)$.
\end{Prop}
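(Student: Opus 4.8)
The plan is to apply Ascoli's Theorem (\cref{Ascoli}) to the family $F=\iota_{\Cmp,o}(X)=\{\delta_p-\delta_p(o)\mid p\in X\}\subseteq C_\Cmp(X)$. For this one must verify the two hypotheses of that theorem: that $F$ is equicontinuous and that $F(x)\subseteq\RR$ is bounded for every $x\in X$.

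First I would show that every member of $F$ is $\beta$-Lipschitz, where $\beta$ is the constant appearing in (bl). Starting from \eqref{InA}, namely $\delta_p(x)-\delta_p(y)\leq\delta(y,x)\leq\beta d(x,y)$, and interchanging the roles of $x$ and $y$ (using the symmetry of $d$), one obtains $|\delta_p(x)-\delta_p(y)|\leq\beta d(x,y)$ for all $p,x,y\in X$. Adding the constant $-\delta_p(o)$ does not change this estimate, so each function $\delta_p-\delta_p(o)$ is $\beta$-Lipschitz; a uniformly Lipschitz family is equicontinuous, so $F$ is equicontinuous. For the second hypothesis, fix $x\in X$ and put $y=o$ in the previous estimate: then $|(\delta_p-\delta_p(o))(x)|=|\delta_p(x)-\delta_p(o)|\leq\beta d(x,o)$ for every $p\in X$, so $F(x)$ is contained in the bounded interval $[-\beta d(x,o),\beta d(x,o)]$.

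With both hypotheses in hand, \cref{Ascoli} yields that $F$ has compact closure $\overline F$ in $(C_\Cmp(X),\mathcal T_\Cmp)$ and that $\overline F$ coincides, set-theoretically and topologically, with the closure of $F$ in $(C_\Fin(X),\mathcal T_\Fin)$. To conclude I would note that this closure lies inside $I_{\Cmp,o}(X)$, respectively $I_{\Fin,o}(X)$: every $\delta_p-\delta_p(o)$ vanishes at $o$, so $F\subseteq I_{\Cmp,o}(X)$, and $I_{\Cmp,o}(X)$ is closed in $C_\Cmp(X)$ since it is the kernel of the continuous evaluation map at $o$; hence $\overline F\subseteq I_{\Cmp,o}(X)$, and the closure of $F$ in $(C_\Cmp(X),\mathcal T_\Cmp)$ agrees with its closure in the closed subspace $I_{\Cmp,o}(X)$, where it is therefore compact. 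The identical argument works with $\Fin$ in place of $\Cmp$, giving the second assertion.

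There is no genuine obstacle here; the only points requiring a moment's attention are that the asymmetry of $\delta$ does not spoil the two-sided Lipschitz bound — handled by applying \eqref{InA} to both ordered pairs $(x,y)$ and $(y,x)$ — and that one should work in the closed subspace $I_{\fS,o}(X)$ rather than in all of $C_\fS(X)$, which causes no difficulty precisely because that subspace is closed.
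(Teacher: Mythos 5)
Your proof is correct and follows essentially the same route as the paper: apply Ascoli's Theorem to the family $F=\{\delta_p-\delta_p(o)\mid p\in X\}$ after verifying equicontinuity (uniform $\beta$-Lipschitz estimate coming from (bl)) and pointwise boundedness. You are slightly more explicit than the paper in two respects — using (bl) on both ordered pairs $(x,y)$ and $(y,x)$ to get the two-sided Lipschitz bound despite the asymmetry of $\delta$, and noting that $I_{\fS,o}(X)$ is closed so compactness transfers from $C_\fS(X)$ — but these are small clarifications of the same argument, not a different approach.
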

\begin{proof}
	We put
	\[
	F=\{\delta_p-\delta_p(o)\mid p\in X\}\subseteq C_\Cmp(X).
	\]
	We have
	\[
	|(\delta_p(x)-\delta_p(o))-(\delta_p(y)-\delta_p(o))|\leq \delta(y,x)\leq\beta d(x,y)
	\]
	by Inequality~(\ref{InA}), which shows that $F$ is equicontinuous.
	For $x\in X$ fixed we have
	\[
	|\delta_p(x)-\delta_p(o)|\leq \delta(o,x)\leq\beta d(o,x),
	\]
	which is a bounded set. Hence we may apply Ascoli's Theorem~\ref{Ascoli}.
\end{proof}

\begin{Def}
\label{def::bordification}
	Let $(X,d)$ be a metric space and assume that $\delta$ is an asymmetric metric on $X$ 
	having properties (bL) and (ic). We call the closure of
	$\tilde\iota_\Bnd(X)$ in $C_\Bnd(X)/\RR$ the \emph{bordification} $\widehat X$ of $X$ (with respect
	to $\delta$),
	\[
	\widehat X=\overline{\tilde\iota_\Bnd(X)}\subseteq C_\Bnd(X)/\RR.
	\]
\end{Def}

If $(X,d)$ is a complete CAT(0) space, then this construction gives, for $\delta=d$, the 
bordification of $\widehat X=X\cup\partial_\infty X$ by its visual boundary $\partial_\infty X$, as described for example in \cite[III.8]{BH}.

The space $\widehat X$ is a complete uniform space.
The uniform structure on $C_\Bnd(X)/\RR$ is defined by means of the countable family $(d_k)_{k\in\NN}$ 
of pseudo-metrics
\[
d_k(\phi,\psi)=\sup\{|(\phi(x)-\phi(o))-(\psi(x)-\psi(o))| \mid x\in X\text{ and }d(x,o)\leq 2^k\}, 
\]
where $o\in X$ is a fixed basepoint.

If $(X,d)$ as in Definition \ref{def::bordification} is in addition proper, then $\mathcal T_\Bnd=\mathcal T_\Cmp$ and thus $\widehat X$ is compact by Proposition~\ref{Compactness}.
The representatives in $C_\Bnd(X)$ of the elements of $\widehat X$ are called \emph{horofunctions}. 
\emph{Horoballs} are the sublevel sets of horofunctions.
Every horofunction $h$ has a unique representative in $I_{\Bnd,o}(X)$, namely $h-h(o)$.  
We call these representatives \emph{normalized horofunctions} with respect to the base point $o$.
The group $\Isom_\delta(X)$ acts in a natural way from the left on the bordification of $X$,
because every isometry $g$ preserves $\delta$, and maps $N_{Q,\eps}(\delta(p,-))$ to $N_{g(Q),\eps}(\delta(gp,-))$. 
We recall that the action of $\Isom_\delta(X)$ on the set of normalized horofunctions
is given by
\[
(g\phi)(x)=\phi(g^{-1}(x))-\phi(g^{-1}(o)).
\]


\section{Horofunctions via nonstandard analysis}
\label{sec:5}

Our aim now is to describe horofunctions using nonstandard analysis. 
We fix a free ultrafilter $\mu$ on a countably infinite index set $J$.
Given any set $X$, we denote by $\ns X$ the ultrapower of $X$ with respect to $\mu$. Thus
\[
\ns X=\prod_JX/\mu,
\]
where two sequences $x,y\in \prod_JX$ are identified in $\prod_JX/\mu$ if $\{j\in J\mid x_j=y_j\}\in\mu$.
There is a natural diagonal injection \[X\longrightarrow \ns X\]
which allows us to view $X$ as a subset of $\ns X$.
If $f:X\longrightarrow Y$ is a function, then $f$ has a natural extension $\ns f:\ns X\longrightarrow\ns Y$.
If $X=\RR$, then $\ns\RR$, endowed with the extended multiplication and addition, is a field, the field 
of \emph{nonstandard reals}. \L os' Theorem \cite[5.2.1]{BellSlomson} guarantees that the ultrapower of a given 
first-order structure satisfies exactly the same first-order formulas as the original first-order structure.
Thus $\ns \RR$ is an ordered real closed field, because this is a first-order property: we may write out a 
sentence, for each $n\geq 1$, saying that every polynomial of degree $2n+1$ has a zero.
Likewise, we can write out that every positive element is a square. We put $|r|=\max\{\pm r\}$, for $r\in\ns\RR$.

The reason why nonstandard structures are interesting is that they contain in general new elements
with remarkable properties. This phenomenon is called \emph{$\omega_1$-saturation} of ultrapowers. 
If $f_n$ is a countable sequence of first-order formulas in a free variable 
and if for each $n$ there is an element $x_n\in X$ that witnesses $f_k$ for all $k\leq n$, then there is an element 
$x\in\ns X$ that witnesses all formulas $f_n$ simultaneously, see eg.~\cite[11.2.1]{BellSlomson}.
For instance, there exists for every $n\in\NN$ a real number $r$ such that $r>k$, for $k=0,1,2,\ldots,n$ (eg. $r=n+1$).
It follows that in $\ns\RR$, there exist elements $r$ such that $r>n$ holds for every natural number $n$,
i.e. $\ns \RR$ is a \emph{non-archimedean ordered real closed field}
\footnote{An ordered field is called \emph{archimedean} if for every field element $r$, there exists an integer
	$n$ such that $r\leq n$.}
which contains $\RR$ as a subfield.
The set of \emph{finite elements} in $\ns\RR$ is defined as 
\[
\ns\RR_{\fin}:=\{r\in\ns\RR\mid |r|\leq n\text{ for some }n\in\NN\}.
\]
This subset is a local ring, whose unique maximal ideal is the set of \emph{infinitesimal elements}, defined as 
\[
\ns\RR_{\infi}:=\{r\in\ns\RR\mid |r|\leq 2^{-n}\text{ for every }n\in\NN\}.
\]
The natural map $\std:\ns\RR_{\fin}\longrightarrow\ns\RR_{\fin}/\ns\RR_{\infi}\cong\RR$
is called the \emph{standard part map}. It splits surjectively as
\[
\begin{tikzcd}
0 \arrow{r} & \ns\RR_{\infi} \arrow{r}{\unlhd} & \ns\RR_{\fin}\arrow{r}{\std} & \RR \arrow{r} \arrow[shift left]{l}{\inc}  & 0,
\end{tikzcd}
\]
compare~\cite{Robinson}~9.4.3.

\begin{Def}
	Assume that $(X,d)$ is a metric space with basepoint $o$ and that $\delta$ is an asymmetric metric on $X$ which satisfies conditions (bL) and (ic). 
	Then $d$ and $\delta$ extend to maps 
	\[
	\ns d:\ns (X\times X)=\ns X\times\ns X\longrightarrow\ns\RR\quad \text{ and }\quad
	\ns \delta:\ns (X\times X)=\ns X\times\ns X\longrightarrow\ns\RR.
	\]
	By \L os' Theorem, $\ns \delta$ and $\ns d$ have the same first-order properties as $\delta$ and $d$.
	In particular, they satisfy the axioms (i)-(iii) from Definition~\ref{asmetricdef}, and the 
	conditions (bL) and (ic).
	
	We recall that $X$ may be viewed as a subset of $\ns X$.
	For $p\in\ns X$ we define a map $h_p:X\longrightarrow\RR$ by
	\[
	h_p(x):=\std(\ns \delta(p,x)-\ns \delta(p,o)).
	\]
\end{Def}

The right-hand side is well-defined, since the triangle inequality for $\ns \delta$ implies that $\ns \delta(p,x)-\ns \delta(p,o)\in\ns\RR_{\fin}$ for all $x\in X$. 
We note that $h_p$ is $\beta$-Lipschitz for the constant $\beta$ in Condition (bL) and that $ h_p\in I_{\Bnd,o}(X)$.
For $p\in X$ we obtain $h_p=\delta_p-\delta_p(o)$, which is a horofunction. 

In general, not every such $h_p$ is a horofunction. We therefore introduce the following notion. 
Let us call a metric space $(X,d)$ \emph{almost proper} if the following holds.
For every bounded set $Y\subseteq X$ and every $\eps>0$, there exists a finite set 
$Y_0\subseteq X$ such that $Y\subseteq\bigcup\{ B_\eps(x)\mid x\in Y_0\}$, where $B_\eps(x):=\{ y \in X\mid d(x,y) <\eps \}$. 
\footnote{In other words, we require that every bounded subset of $X$ is totally bounded.}
Every proper metric space is almost proper. Conversely, the metric completion
of an almost proper metric space is proper. An example of a non-proper, almost proper metric space is $\QQ$, endowed with the Euclidean metric induced from $\RR$.

\begin{Thm}\label{Thm41}\label{thm:normalized-horofunction}
	Let $(X,d)$ be a metric space and assume that $\delta$ is an asymmetric metric on $X$ 
	having properties (bL) and (ic). Let $o\in X$ be a basepoint.
	For every normalized horofunction $\phi\in\widehat X$, there exists $p\in\ns X$ with $\phi=h_p$.
	If $(X,d)$ is almost proper, then conversely every $h_p$, for $p\in\ns X$, is a normalized horofunction.
\end{Thm}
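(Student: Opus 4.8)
The plan is to prove the two implications separately. For the first — every normalized horofunction arises as some $h_p$ with $p\in\ns X$ — I would start from the definition $\widehat X=\overline{\tilde\iota_\Bnd(X)}$. Fix a normalized horofunction $\phi\in I_{\Bnd,o}(X)$, so $\phi$ lies in the closure of $\{\delta_q-\delta_q(o)\mid q\in X\}$ with respect to $\mathcal T_\Bnd$, whose neighborhood basis at $\phi$ is the countable family $N_{\bar B_{2^k}(o),2^{-\ell}}(\phi)$. Thus for each pair $(k,\ell)$ there is a point $q_{k,\ell}\in X$ with $\sup\{|(\delta_{q_{k,\ell}}(x)-\delta_{q_{k,\ell}}(o))-\phi(x)|\mid d(x,o)\leq 2^k\}<2^{-\ell}$. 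Now invoke $\omega_1$-saturation: the conditions "$|(\ns\delta(p,x)-\ns\delta(p,o))-\phi(x)|\leq 2^{-\ell}$ for all $x$ with $d(x,o)\leq 2^k$" form a countable family of (internal) formulas in the free variable $p$, each satisfied by some standard point; saturation yields a single $p\in\ns X$ satisfying all of them. One then checks that for this $p$, $h_p(x)=\std(\ns\delta(p,x)-\ns\delta(p,o))=\phi(x)$ for every $x\in X$: given $x$, pick $k$ with $d(x,o)\leq 2^k$, and the inequality with that $k$ and arbitrary $\ell$ forces the standard part of the difference to be $0$. A small subtlety is that "$|(\ns\delta(p,x)-\ns\delta(p,o))-\phi(x)|\leq 2^{-\ell}$ for all $x\in\bar B_{2^k}(o)$" needs to be phrased as a single internal condition ranging over the internal set $\ns{\bar B_{2^k}(o)}$; this is fine because the standard points of $\bar B_{2^k}(o)$ witnessing it for each $\ell$ are enough to apply saturation over the index set of pairs $(k,\ell)$.

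\textbf{The converse direction.} Now assume $(X,d)$ is almost proper and fix $p\in\ns X$; I must show $h_p\in\widehat X$, i.e. $h_p$ lies in the $\mathcal T_\Bnd$-closure of $\tilde\iota_\Bnd(X)$, equivalently in the $I_{\Bnd,o}$-closure of $\{\delta_q-\delta_q(o)\}$. Fix $k,\ell\in\NN$; it suffices to find $q\in X$ with $|(\delta_q(x)-\delta_q(o))-h_p(x)|<2^{-\ell}$ for all $x$ with $d(x,o)\leq 2^k$. Using almost properness, cover $\bar B_{2^k}(o)$ by finitely many balls of radius $\eps$ (with $\eps$ to be chosen $\ll 2^{-\ell}/\beta$), with centers $x_1,\dots,x_N$. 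It is enough to control the difference at the finitely many points $x_1,\dots,x_N$, since both $\delta_q-\delta_q(o)$ and $h_p$ are $\beta$-Lipschitz (by Inequality~(\ref{InA}) and the remark following the definition of $h_p$), so agreement up to $2^{-\ell-1}$ at the $x_i$ propagates to agreement up to $2^{-\ell-1}+2\beta\eps<2^{-\ell}$ on the whole ball. Now for each $i$ we have $h_p(x_i)=\std(\ns\delta(p,x_i)-\ns\delta(p,o))$, so $|\ns\delta(p,x_i)-\ns\delta(p,o)-h_p(x_i)|$ is infinitesimal; hence the internal statement "there exists $y\in\ns X$ such that $|\ns\delta(y,x_i)-\ns\delta(y,o)-h_p(x_i)|<2^{-\ell-1}$ for $i=1,\dots,N$" holds in $\ns X$ (witnessed by $y=p$), and by transfer there is a standard $q\in X$ with the same property. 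This $q$ does the job.

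\textbf{Main obstacle.} The routine parts are the Lipschitz estimates and the translation between the $\mathcal T_\Bnd$-closure and conditions on finitely many balls; the genuinely delicate point is the first direction, namely setting up the saturation argument correctly — making sure the family of formulas indexed by $(k,\ell)$ is a legitimate countable family of internal conditions on $p$, each of which is finitely satisfiable by standard points, so that $\omega_1$-saturation of the ultrapower applies. Once $p$ is produced, verifying $h_p=\phi$ is immediate from the construction. The almost-properness hypothesis is used only, and essentially, to reduce the uniform (over a bounded ball) approximation to a finite one so that transfer of an existential statement suffices; without it the existential "there is a standard $q$ approximating $p$ uniformly on $\bar B_{2^k}(o)$" can genuinely fail, as the Remark with $X=\NN$ illustrates.
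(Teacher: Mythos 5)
Your proof is correct. The forward direction is essentially the one the paper gives: the same countable family of formulas $f_{k,\ell}$, each finitely satisfiable by a standard point because $\phi$ is a horofunction, with $\omega_1$-saturation producing $p\in\ns X$ and the standard-part argument yielding $h_p=\phi$. In the converse direction you and the paper both use almost properness to cover $\bar B_{2^k}(o)$ by a finite $2^{-\ell}$-net and both use the $\beta$-Lipschitz bound to propagate agreement from the net to the whole ball; the only variation is that you close the argument by a downward transfer of the existential sentence ``there exists $y$ with $|\delta(y,x_i)-\delta(y,o)-h_p(x_i)|<2^{-\ell-1}$ for $i=1,\dots,N$'' (witnessed in $\ns X$ by $p$), whereas the paper unwinds $p$ into a representative $(p_j)_{j\in J}$, shows that for each net point $y$ the set $\{j\mid |h_p(y)-\phi_j(y)|\le 2^{-\ell}\beta\}$ lies in the ultrafilter, and intersects over the finite net to get a nonempty set of good indices. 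These are two equivalent implementations of the same idea, of comparable length; so: correct proof, essentially the paper's approach, with only a cosmetic difference in how the converse is finished.
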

\begin{proof}
	Suppose that $\phi:X\longrightarrow \RR$ is a normalized horofunction. 
	Let $\ns\phi:\ns X\longrightarrow\ns\RR$ denote its extension to the
	ultrapower and consider the 
	countable set $F=\{f_{k,\ell}(\mathsf{v})\mid k,\ell\in\NN\}$ of formulas $f_{k,\ell}$ in one free variable $\mathsf v$,
	\[
	f_{k,\ell}(\mathsf{v})=\mathsf{\forall x \left[d(x,o)\leq 2^{\mathit k} \rightarrow |\phi(x)-(\delta(v,x)-\delta(v,o))|\leq 2^{-\ell}\right]}.
	\]

	For every finite subset $F_0\subseteq F$, there exists a point $p\in X$ such that
	if we substitute $p$ for the free variable $\mathsf{v}$, then
	$f_{k,\ell}(p)$ holds simultaneously for all formulas $f_{k,\ell}\in F_0$.
	This is true since $\phi$ is a horofunction,
	which can be approximated to arbitrary precision on each ball of radius
	$2^k$ by a map $x\longmapsto \delta(p,x)-\delta(p,o)$, for some choice of $p\in X$.
	By the aforementioned $\omega_1$-saturation of ultraproducts based on countable index sets,
	there exists a point $p\in \ns X$ such that 
	$f_{k,\ell}(p)$ holds simultaneously for all formulas $f_{k,\ell}\in F$.
	Hence
	\[
	\phi(x)-(\ns \delta_p(x)-\ns \delta_p(o))\in\ns\RR_{\infi}
	\]
	holds for all $x\in X$, that is, $h_p=\phi$.
	
	Assume now that $(X,d)$ is proper, and that $\delta\leq\beta d$. 
	Since $X$ is almost proper, we find finite sets $Y_{k,\ell}\subseteq X$ such 
	that 
	$\bar B_{2^k}(0)\subseteq\bigcup\{\bar B_{2^{-\ell}}(y)\mid y\in Y_{k,\ell}\}$.
	Let $p\in\ns X$ and put
	$h_p(x)=\std(\ns\delta_p(x)-\ns\delta_p(o))$.
		Let $(p_j)_{j\in J}$ be a
	sequence in $\prod_J X$ representing $p$ in the 
	ultrapower $\prod_JX/\mu$.
	We put 
	\[
	\phi_j(x)=\delta(p_j,x)-\delta(p_j,o)
	\]
	and we note that these maps are normalized horofunctions.
	Given $\ell\in\NN$ and $x\in X$, the set \[J_\ell(x) := \{ j \in J \mid |h_p(x)- \phi_j(x)| \leq 2^{-\ell}\beta\}\] is in 
	the ultrafilter $\mu$, by the definition of $h_p$.
	Since $Y_{k,\ell}$ is finite, the set \[J_{k,\ell}=\bigcap\{ J_\ell(y)\mid y\in Y_{k,\ell}\}\] is also in $\mu$
	and in particular nonempty.
	For $x\in \bar B_{2^k}(0)$, there exists $y\in Y_{k,\ell}$ with $d(x,y)\leq2^{-\ell}$.
	For $j\in J_{k,\ell}$ we have thus
	\begin{align*}
	|h_p(x)-\phi_j(x)| 
	& \leq|h_p(x)-h_p(y)|+|h_p(y)-\phi_j(y)|+|\phi_j(y)-\phi_j(x)|\\
	& \leq 3\cdot 2^{-\ell}\beta.
	\end{align*}
	Thus the set of normalized horofunctions 
	$\{\phi_j\mid j\in J\}$ has $h_p$ in its closure (with respect to the topology $\mathcal T_\Bnd$),
	whence $h_p\in\widehat X$.
\end{proof}
If $X$ is not almost proper, not every $h_p$ needs to be a horofunction.
Put $X=\NN$ with the discrete metric $d(i,j)=1$
if $i\neq j$. Then $\widehat X=X$. Put $J=\NN$, and consider the point
$p\in\ns X$ which is represented by the sequence $(0,1,2,3,\ldots)\in\NN^\NN$.
The $\ns d(k,p)=1$ for every $k\in X$, whence $h_p=0$.
\begin{Cor}\label{cor:restrictions}
	Let $(X,d)$ be an almost proper metric space and assume that $\delta$ is an asymmetric metric on $X$.
	Let $A\subseteq X$ be a closed subset and assume that both $\delta$ and the restriction of $\delta$ to $A$
	have properties (bL) and (ic).
	Then every horofunction on $A$ is the restriction of some horofunction on $X$.
\end{Cor}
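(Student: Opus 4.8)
The plan is to use the nonstandard description of horofunctions from Theorem~\ref{thm:normalized-horofunction} and to transfer a point witnessing a horofunction on $A$ from $\ns A$ to $\ns X$. First I would observe that $\ns A$ sits inside $\ns X$ as the ultrapower of the subset $A\subseteq X$, compatibly with the diagonal inclusions; this is immediate from the construction of ultrapowers and \L os' Theorem applied to the formula defining $A$ inside $X$. Since $A$ is a closed subset of the almost proper space $X$, it is itself almost proper, and by hypothesis the restriction $\delta|A$ satisfies (bl) and (ic). Thus Theorem~\ref{thm:normalized-horofunction} applies to both $X$ and $A$.

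Next, fix a basepoint $o\in A$ (using $A\neq\emptyset$; if $A=\emptyset$ there is nothing to prove). Let $\psi$ be any normalized horofunction on $A$. By the first half of Theorem~\ref{thm:normalized-horofunction} applied to $(A,d|A,\delta|A)$, there is a point $p\in\ns A$ with $\psi = h_p^A$, where $h_p^A(x)=\std(\ns\delta(p,x)-\ns\delta(p,o))$ for $x\in A$. Now view $p$ as an element of $\ns X$ via $\ns A\subseteq\ns X$. Because $\ns\delta$ on $\ns X\times\ns X$ restricts to $\ns(\delta|A)$ on $\ns A\times\ns A$ (again by \L os' Theorem: the two maps agree on $A\times A$, hence their extensions agree on $\ns A\times\ns A$), the function $h_p^X:X\longrightarrow\RR$, $h_p^X(x)=\std(\ns\delta(p,x)-\ns\delta(p,o))$, is well-defined (the argument lies in $\ns\RR_\fin$ by the triangle inequality for $\ns\delta$, exactly as in the paragraph after the definition of $h_p$) and satisfies $h_p^X|A = h_p^A = \psi$.

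Finally, by the converse half of Theorem~\ref{thm:normalized-horofunction} applied to $(X,d,\delta)$ — which is legitimate since $X$ is almost proper and $\delta$ has (bl) and (ic) — the function $h_p^X$ is a normalized horofunction on $X$. Its restriction to $A$ is $\psi$, which proves the claim.

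The only genuinely delicate point is the compatibility of the ultrapower construction with passing to a subset, i.e.\ that $\ns A \subseteq \ns X$ and that $\ns\delta$ restricted to $\ns A\times\ns A$ is the same as the extension of $\delta|A$; everything else is a direct invocation of the theorem already proved. This compatibility is a standard feature of \L os' Theorem, but it is worth stating explicitly since the whole argument hinges on being able to reinterpret a point of $\ns A$ as a point of $\ns X$ without changing the value of $\ns\delta$.
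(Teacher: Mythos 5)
Your argument is correct and follows the same route as the paper: use the forward direction of Theorem~\ref{thm:normalized-horofunction} on $A$ to produce $p\in\ns A$, reinterpret $p$ as a point of $\ns X$, and apply the converse direction on $X$ (using almost properness) to see that $h_p$ is a horofunction on $X$ restricting to the given one. The extra remarks about compatibility of ultrapowers with passing to the subset $A$, and that $A$ inherits almost properness, are correct and simply make explicit what the paper leaves tacit.
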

\begin{proof}
	Let $\phi$ be a horofunction on $A$. We may assume that $\phi$ is normalized with respect to a base point $o\in A$.
	There exists $p\in\ns A$ such that  $\phi(x)=\std(\ns \delta(p,x)-\ns \delta(p,o))$. Since $X$ is almost proper
	and since $\ns A\subseteq\ns X$, the map $h_p$ is a horofunction on $X$, with $h_p|A=\phi$.
\end{proof}
In the setting of proper metric spaces the previous corollary follows also directly.
If $A\subseteq X$ is a subspace, we may consider the commutative diagram
\[
 \begin{tikzcd}
  A \arrow{r}{\inc}\arrow{d}{\tilde\iota_{\Cmp|A}} & X \arrow{d}{\tilde\iota_{\Cmp}} \\
  C_{\Cmp|A}(A)/\RR & C_\Cmp(X)/\RR \arrow{l}{r_A}.
 \end{tikzcd}
\]
By continuity we have an inclusion $r_A(\overline{\tilde\iota_\Cmp(\inc(A))})\subseteq\overline{\tilde\iota_{\Cmp|A}(A)}$
and by compactness of the set $\overline{\tilde\iota_{\Cmp}(\inc(A))}$ we have equality.
Hence $r_A(\overline{\tilde\iota_{\Cmp}(\inc(A))})=\widehat A$ if $A$ and $X$ are proper and satisfy (bL) and (ic).
In particular, $r_A$ maps $\overline{\tilde\iota_{\Cmp}(\inc(A))}$ homeomorphically onto $\widehat A$ if and only if
$r_A$ is injective on $\overline{\tilde\iota_{\Cmp}(\inc(A))}$.
\color{black}
%
%


\section{Polyhedral norms}
\label{sec:polyhedral-norms}

In this section we introduce asymmetric norms determined by compact convex polyhedra. 
We fix a finite dimensional real vector space $V$, with dual $V^\vee=\mathrm{Hom}_\RR(V,\RR)$ and assume that $d(u,v)=||u-v||$
is a Euclidean metric on $V$. Then the metric space $(V,d)$ is proper.
We also fix $o=0\in V$ as the base point.

Let $B\subseteq V$ be a compact convex polyhedral $0$-neighborhood and let $A_0,\ldots,A_m\subseteq B$ be the codimension-$1$-faces
of $B$. Corresponding to each $A_j\subseteq B$, there is a unique linear functional $\xi_j\in V^\vee$ such that $A_j=\{v\in B\mid\xi_j(v)=1\}$.
This allows us to write $B$ as 
\[
B=\{u\in V\mid \xi_0(u),\cdots,\xi_m(u)\leq 1\}.
\]
The asymmetric norm $\nu$ determined by $B$ as in Lemma~\ref{LipschitzLemma} is then given by
\[
\nu(u)=\max\{\xi_0(u),\ldots,\xi_m(u)\}.
\]
We put $K=\{0,\ldots,m\}$.
A nonempty subset $L\subseteq K$ is called a \emph{dual face} if there exists $v\in V$ with $\nu(v)=1$ such that \[L=\{k\in K\mid\xi_k(v)=1\}.\] 
The geometric motivation for this is as follows.
The set $B$ has a polyhedral dual $B^\vee\subseteq V^\vee$, which is given by
\[
B^\vee=\{\xi\in V^\vee\mid\xi(u)\leq 1\text{ for all }u\in B\}.
\]
Thus $B^\vee$ is the convex hull of $\xi_0,\ldots,\xi_m$.
The proper faces of the polyhedron $B^\vee$ are precisely the convex hulls
of the sets $\{\xi_\ell\mid\ell\in L\}$, where $L\subseteq K$ is a dual face as defined above. 
We emphasize that a dual face in our setup is just a subset of the index set $K$.  
We denote the set of all dual faces by
\[
\Sigma=\{L\subseteq K\mid L\text{ is a dual face}\}.
\]
For any nonempty subset $L\subseteq K$ we put 
\[
\nu_L(u)=\max\{\xi_\ell(u)\mid\ell\in L\}.
\]
Thus $\nu=\nu_K$. The \emph{negative cone} of $L$ is the set 
\[
N_L=\{v\in V\mid\xi_\ell(v)\leq0\text{ for all }\ell\in L\}.
\]
If $W\subseteq V$ is a linear subspace with $W\cap N_L=\{0\}$, then the restriction 
$\nu_L|W$ is an asymmetric norm on $W$.

\begin{Lem}\label{TriangleInEq}
	There is a real constant $\gamma>0$ such that 
	\[
	|\nu_L(p)-\nu_L(q)|\;\leq\;\gamma||p-q||
	\]
	and
	\[
	|\nu_L(p-u)-\nu_L(p)-\nu_L(q-u)+\nu_L(q)|\;\leq\;2\gamma||p-q|| 
	\]
	hold for all subsets $L\subseteq K$ and all $u,p,q\in V$.
\end{Lem}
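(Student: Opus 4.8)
The plan is to reduce everything to a single uniform Lipschitz bound on the finitely many linear functionals $\xi_0,\dots,\xi_m$. First I would set $\gamma = \max\{\norm{\xi_k} \mid k\in K\}$, where $\norm{\xi_k}$ is the operator norm of $\xi_k$ with respect to the euclidean metric $d$ on $V$ (equivalently its norm as an element of the dual of the euclidean space $(V,\norm{.})$). Since $K$ is finite, this maximum exists and is a positive real constant, independent of any subset $L\subseteq K$. For each $\ell$ we then have $|\xi_\ell(p)-\xi_\ell(q)| = |\xi_\ell(p-q)| \leq \gamma\norm{p-q}$ for all $p,q\in V$.

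For the first inequality I would use the elementary fact that a maximum of functions that are each $\gamma$-Lipschitz is again $\gamma$-Lipschitz: since $\nu_L(p) = \max_{\ell\in L}\xi_\ell(p)$ and each $\xi_\ell$ satisfies $\xi_\ell(p) \leq \xi_\ell(q) + \gamma\norm{p-q}$, taking the maximum over $\ell\in L$ on both sides (first fixing the $\ell$ achieving $\nu_L(p)$) gives $\nu_L(p) \leq \nu_L(q) + \gamma\norm{p-q}$, and by symmetry in $p,q$ the absolute value bound $|\nu_L(p)-\nu_L(q)| \leq \gamma\norm{p-q}$ follows. This holds simultaneously for every $L$ because $\gamma$ was chosen uniformly.

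For the second inequality I would apply the first inequality twice. Write the quantity inside the absolute value as $\bigl(\nu_L(p-u) - \nu_L(q-u)\bigr) - \bigl(\nu_L(p) - \nu_L(q)\bigr)$. The first bracket is bounded in absolute value by $\gamma\norm{(p-u)-(q-u)} = \gamma\norm{p-q}$ by the first inequality applied to the points $p-u$ and $q-u$; the second bracket is bounded by $\gamma\norm{p-q}$ directly. The triangle inequality for the absolute value then yields the bound $2\gamma\norm{p-q}$, again uniformly in $L$ and in $u,p,q$.

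There is essentially no obstacle here: the only thing to be careful about is that the constant $\gamma$ must not depend on $L$, which is automatic once it is defined as a maximum over the fixed finite index set $K$ and one observes $\nu_L$ ranges only over a sub-maximum. The role of the hypothesis that $d$ is euclidean (hence that $V$ is finite dimensional) is just to guarantee that each linear functional $\xi_k$ is bounded, i.e. continuous, so that $\norm{\xi_k}<\infty$; in finite dimensions this is automatic. I would present the argument in the two short steps above, noting that the second inequality will be the one used later to control the oscillation of $\nu_L$ under perturbations, e.g. in passing to the ultrapower $\ns V$.
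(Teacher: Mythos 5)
Your proof is correct. It differs slightly from the paper's in the choice of the constant and in how the second inequality is obtained, so it is worth flagging the contrast. You set $\gamma=\max_{k\in K}\norm{\xi_k}$ and derive the second bound by applying the first one twice (to the pairs $(p-u,q-u)$ and $(p,q)$), which is clean and certainly works. The paper instead chooses $\gamma$ so that $|(\xi_k-\xi_\ell)(u)|\leq\gamma\norm{u}$ for all $k,\ell\in K$, i.e.\ it bounds the \emph{differences} of the functionals rather than the functionals themselves. That choice yields the second inequality directly and without the factor $2$: if $\ell_1$ realizes $\nu_L(p-u)$ and $\ell_4$ realizes $\nu_L(q)$, then
\[
\bigl(\nu_L(p-u)-\nu_L(q-u)\bigr)-\bigl(\nu_L(p)-\nu_L(q)\bigr)\leq \xi_{\ell_1}(p-q)-\xi_{\ell_4}(p-q)=(\xi_{\ell_1}-\xi_{\ell_4})(p-q)\leq\gamma\norm{p-q},
\]
and symmetry in $p,q$ gives the absolute-value bound. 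However, bounding only the differences $\xi_k-\xi_\ell$ does not by itself yield the \emph{first} inequality (the Lipschitz bound for $\nu_L$ genuinely needs the operator norms of the individual $\xi_\ell$; take $K=\{0\}$ and $\xi_0$ large to see this). So the paper's stated choice of $\gamma$ handles only the second inequality and should be understood as also being taken $\geq\max_k\norm{\xi_k}$; your choice handles both at once and is the safer formulation. Either way the lemma is correct, and your two-step argument is a perfectly good proof.
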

\begin{proof}
	We choose $\gamma$ in such a way that $|(\xi_k-\xi_\ell)(u)|\leq\gamma||u||$ holds for all $k,\ell\in K$
	and $u\in V$.
\end{proof}

The metric space $(V, d)$ is proper. If we put \[\delta(u,v)=\nu(u-v),\]
the asymmetric metric $\delta$ satisfies conditions (bL) and (ic). 
Its normalized horofunctions are by Theorem~\ref{Thm41} the maps 
\[
h_p(v)=\std(\ns\nu(p-v)-\ns\nu(p)),
\]
for $p\in\ns V$. The first aim of this section is to show the following.

\begin{Thm}\label{AsymHoro}
	The normalized horofunctions of $V$ with respect to the asymmetric metric $\delta$ as above are precisely the maps 
	\[
	u\longmapsto\nu(p-u)-\nu(p),
	\]
	for $p\in V$, and the maps 
	\[
	u\longmapsto\nu_L(p-u)-\nu_L(p),
	\]
	for $p\in V$ and $L\subseteq K$ a dual face.
\end{Thm}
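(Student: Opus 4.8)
The plan is to combine Theorem~\ref{Thm41} with a concrete analysis of the nonstandard computation $\std(\ns\nu(p-u)-\ns\nu(p))$, carried out separately according to how "far out" the point $p\in\ns V$ lies. By Theorem~\ref{Thm41} the normalized horofunctions are exactly the maps $h_p(u)=\std(\ns\nu(p-u)-\ns\nu(p))$ for $p\in\ns V$, since $(V,d)$ is proper and $\delta$ satisfies (bl) and (ic). So the content of the statement is the identification of each $h_p$ with one of the listed maps, and conversely the verification that every listed map arises as some $h_p$.

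\textbf{The dichotomy on $p$.} Write $\ns\nu(p)=\max_{k\in K}\ns\xi_k(p)$ by \L os' Theorem. The first case is $\ns\nu(p)\in\ns\RR_\fin$, i.e. $p$ lies a finite distance from $V$; then $\std(p)=:p_0\in V$ exists (using properness, or directly: each coordinate of $p$ is finite since $\ns\nu$ is bi-Lipschitz to $\ns d$ by (bl)), and continuity of $\nu$ gives $h_p(u)=\nu(p_0-u)-\nu(p_0)$, a map of the first kind. In the second case $\ns\nu(p)$ is infinite. Here I would set $L=\{k\in K\mid \ns\xi_k(p)\text{ is within a finite distance of }\ns\nu(p)\}$, more precisely $L=\{k\in K\mid \ns\nu(p)-\ns\xi_k(p)\in\ns\RR_\fin\}$. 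The key points to check are: (a) $L$ is nonempty (the index achieving the max lies in it); (b) for $k\notin L$, the difference $\ns\nu(p)-\ns\xi_k(p)$ is infinite, so $\ns\xi_k(p-u)-\ns\nu(p)\to-\infty$ and such indices contribute nothing to $\std(\ns\nu_{}(p-u)-\ns\nu(p))$ — hence $h_p(u)=\std(\ns\nu_L(p-u)-\ns\nu_L(p))$; (c) $L$ is in fact a dual face. For (c) one uses that $\ns\nu(p)>0$ is invertible in $\ns\RR$, so $v':=p/\ns\nu(p)\in\ns V$ has $\ns\nu(v')=1$, and $L=\{k\mid \ns\xi_k(v')\text{ is infinitesimally close to }1\}$; applying $\std$ coordinatewise to a representative... but $v'$ need not be finite. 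Better: rescale only after subtracting, or argue that $L=\{k\mid \ns\xi_k(p)=\ns\nu(p)\ (\mathrm{mod}\ \ns\RR_\fin)\}$ is witnessed by the standard vector $\mathrm{std}$ of a suitable finite rescaling — I would pick a direction $w\in V$ with $\nu(w)=1$ realizing the face $L$ by a compactness/limit argument inside the \emph{standard} polyhedron, noting the $\ns\xi_k(p)$ for $k\in L$ all agree with $\ns\nu(p)$ up to finite error while those for $k\notin L$ fall infinitely behind; this forces $L$ to be one of the finitely many dual faces. Finally, once $L$ is a dual face, repeating the first-case analysis with $\nu_L$ in place of $\nu$ on the "finite part" of $p$ transverse to $N_L$ yields $h_p(u)=\nu_L(p_0-u)-\nu_L(p_0)$ for an appropriate $p_0\in V$ — a map of the second kind.

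\textbf{Conversely,} each listed map is realized: a first-kind map $\nu(p_0-u)-\nu(p_0)$ is just $h_{p_0}$ with $p_0\in V\subseteq\ns V$. For a second-kind map $\nu_L(p_0-u)-\nu_L(p_0)$ with $L$ a dual face, pick $w\in V$ with $\nu(w)=1$ and $L=\{k\mid\xi_k(w)=1\}$, fix an infinite $t\in\ns\RR$, and set $p=p_0+tw\in\ns V$. Then $\ns\xi_k(p)=\xi_k(p_0)+t\xi_k(w)$; for $k\in L$ this is $\xi_k(p_0)+t$, while for $k\notin L$ it is $\xi_k(p_0)+t\xi_k(w)$ with $\xi_k(w)<1$, hence smaller by an infinite amount. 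So $\ns\nu(p-u)=\max_{k\in L}\xi_k(p_0-u)+t=\nu_L(p_0-u)+t$ for $u$ standard (the $k\notin L$ terms being dominated), and likewise $\ns\nu(p)=\nu_L(p_0)+t$; subtracting, $h_p(u)=\nu_L(p_0-u)-\nu_L(p_0)$, as desired.

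\textbf{The main obstacle} I anticipate is step (c): showing the set $L$ attached to an infinite $p$ is genuinely a dual face in the sense of the paper (witnessed by a \emph{standard} $v\in V$ with $\nu(v)=1$), rather than merely some subset of $K$ arising from the nonstandard point. The resolution is that there are only finitely many subsets of $K$, and the constraints "$\ns\xi_k(p)$ maximal up to finite error for $k\in L$, infinitely smaller for $k\notin L$" are preserved under the standard-part of any finite rescaling of the component of $p$ in a complement of $N_L$; pushing this through \L os' Theorem (or a direct totally-bounded/ subsequence argument, which here is unnecessary since $V$ is finite-dimensional) identifies $L$ with the combinatorial type of a genuine face of $B^\vee$. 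Once that is in hand, everything else is the routine bi-Lipschitz estimate of Lemma~\ref{TriangleInEq} together with the splitting of $p$ into an "infinite part along the face direction" and a "finite transverse part."
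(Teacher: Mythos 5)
Your high-level plan is exactly the paper's: invoke Theorem~\ref{Thm41}, then analyze $h_p(u)=\std(\ns\nu(p-u)-\ns\nu(p))$ according to whether $\ns\nu(p)$ is finite or infinite, and in the latter case introduce the set $L=\{k\in K\mid\ns\nu(p)-\ns\xi_k(p)\in\ns\RR_\fin\}$, which is precisely the paper's $K_p$. The converse direction (realizing each listed map) is also essentially the paper's argument. However, the forward direction has two real gaps, both of which you flagged as obstacles and both of which center on a decomposition of $p$ that the paper makes precise and you do not.

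First, the argument you sketch for step (c) --- that $K_p$ is a dual face --- does not go through. You worry that $v'=p/\ns\nu(p)$ might not be finite; in fact it is finite, since $\ns\nu(v')=1$ and (bl) gives $\ns\|v'\|\leq\alpha$. The actual problem is that the dual face attached to $\std(v')$ can \emph{strictly contain} $K_p$. For example, take $B=[-1,1]^2\subset\RR^2$ with $\xi_1,\ldots,\xi_4$ the coordinate forms $\pm x,\pm y$, and let $p=(t,t-\sqrt{t})$ for a nonstandard $t\gg0$. Then $\ns\nu(p)=t$ and $\ns\nu(p)-\ns\xi_2(p)=\sqrt{t}\gg 0$, so $K_p=\{1\}$; but $\std(v')=(1,1)$, whose dual face is $\{1,2\}$. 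So passing to $\std(v')$ only shows $K_p$ is contained in a dual face, not that it is one. The paper avoids this by first decomposing $p=p_1+p_2$ with $p_1\in\ns H_{K_p}^\larg$ (so $\ns\xi_\ell(p_1)$ is literally the same for all $\ell\in K_p$, not merely equal up to a finite error) and $p_2\in\ns V_\fin$, then rescaling $p_1$ rather than $p$. That forces exact equality $\{\,k\mid\ns\xi_k(q)=1\,\}=K_p$ for $q=p_1/\ns\nu(p_1)$, after which \L os' Theorem transfers the existence of a standard witness (since $K$ is finite, the statement is first order in a parameter ranging over the finitely many subsets).

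Second, producing the standard point $p_0\in V$ in the formula $h_p(u)=\nu_L(p_0-u)-\nu_L(p_0)$ is not the ``routine'' repetition you describe. It again requires precisely the decomposition $p=p_1+p_2$ with $p_1\in\ns H_{K_p}^\larg$ and $p_2\in\ns V_\fin$: then $\ns\nu(p-u)-\ns\nu(p)=\ns\nu_{K_p}(p_2-u)-\ns\nu_{K_p}(p_2)$ and one takes $p_0=\std(p_2)$. For a generic complement of $H_L$ there is no reason the component of $p$ should be finite; absorbing all the infinite part of $p$ into $p_1$ is the content of the paper's Lemma~\ref{CarryOn}, which in turn rests on Lemma~\ref{Crucial} (the nontrivial identity $\ns H_L+\ns V_\fin=\bigcap_{k,\ell\in L}(\ns H_{k,\ell}+\ns V_\fin)$) and Lemma~\ref{LittleLemma} (a quantitative projection bound transferred through \L os'). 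Both gaps are thus instances of the same missing lemma; once you state and prove an analogue of Lemma~\ref{CarryOn}, your outline closes up.
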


The following picture illustrates the result. It shows on the left the polygonal unit sphere $h_o=1$, and on the right 
the level set $h_p=0$, for $p$ at infinite distance from $o$.

\begin{center}
    \colorlet{shadecolor}{gray!40}
	\begin{tikzpicture}[scale=0.5]
		\draw[shading=radial,inner color=shadecolor,style=dashed] (0,0) circle(2);
		\node at (3,-1) {$X$};
		\node at (0.5,0.5) {$o$};
		\path[draw] (-.5,-.5) -- (.7,-.5) -- (-.5,.7) --cycle;
		\draw[fill=black] (0,0) circle(0.05);
		\node at (0,-1.1) {$h_o=1$};
	\end{tikzpicture}
	\hspace{3cm}
	\begin{tikzpicture}[scale=0.5]
		\draw[shading=radial,inner color=shadecolor,style=dashed] (0,0) circle(2);
		\node at (3,-1) {$X$};
		\path[draw] (-2.2,-.3) -- (.3,-.3) -- (-1.5,1.5);
		\node at (-3.2,1.6) {$p$};
		\draw[fill=black] (-2.5,1.6) circle(0.05);
		\node at (0.5,0.5) {$o$};
		\draw[fill=black] (0,0) circle(0.05);
		\node at (-4,-.3) {$h_p=0$};
	\end{tikzpicture}
	
\end{center}

The proof of this theorem requires some preparations and can be found on page \pageref{proof}. 

\begin{Lem}\label{LittleLemma}
	Let $V$ be a finite dimensional real vector space and let $\eta_1,\ldots,\eta_n$ be nonzero linear functionals on $V$. 
	Let $d$ be a Euclidean metric on $V$.
	Then there exists a real constant $c >0 $, depending only on $\eta_1,\ldots,\eta_n$ and $d$,
	such that the following holds. If $v\in V$ is a vector with $|\eta_i(v)|\leq 1$ 
	for all $i=1,\ldots,n$, then there exists a vector $w\in\eta_1^\perp\cap\cdots\cap\eta_n^\perp$ with $d(v,w)\leq c$.
\end{Lem}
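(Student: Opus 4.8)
The plan is to reduce the statement to a compactness/continuity argument by quotienting out the common kernel of the functionals. Let $U=\eta_1^\perp\cap\cdots\cap\eta_n^\perp$, which is the orthogonal complement (with respect to the euclidean structure $d$) of the subspace $E=\spann\{\eta_1,\ldots,\eta_n\}^\perp$; more precisely, set $E = \bigcap_{i=1}^n \ker\eta_i$ and let $U = E^\perp$ be its orthogonal complement, so $V = E \oplus U$ orthogonally. First I would observe that the map $\Phi\colon U\longrightarrow\RR^n$, $\Phi(w)=(\eta_1(w),\ldots,\eta_n(w))$, is injective: if $w\in U$ and $\Phi(w)=0$ then $w\in E\cap U=\{0\}$. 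Hence $\Phi$ is a linear isomorphism of $U$ onto its image $\Phi(U)\subseteq\RR^n$, a linear subspace.

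Next I would rephrase the claim in terms of this isomorphism. Given $v\in V$ with $|\eta_i(v)|\leq 1$ for all $i$, write $v=v_E+v_U$ with $v_E\in E$, $v_U\in U$. Since $\eta_i$ vanishes on $E$, we have $\eta_i(v)=\eta_i(v_U)$, so $\Phi(v_U)=(\eta_1(v),\ldots,\eta_n(v))$ lies in the cube $[-1,1]^n\cap\Phi(U)$, which is compact. Because $\Phi^{-1}\colon\Phi(U)\longrightarrow U$ is a continuous linear map on a finite dimensional space, it is bounded: there is a constant $c>0$, depending only on $\eta_1,\ldots,\eta_n$ (and on the fixed euclidean metric), with $\|\Phi^{-1}(x)\|\leq c$ for all $x\in[-1,1]^n\cap\Phi(U)$. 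In particular $d(v_U,0)=\|v_U\|\leq c$. Now take $w=v_E\in E=U^\perp$. Then $v-w=v_U$, so $d(v,w)=\|v_U\|\leq c$, and $w$ lies in $\eta_1^\perp\cap\cdots\cap\eta_n^\perp$ — wait, here one must be careful: $\eta_1^\perp\cap\cdots\cap\eta_n^\perp$ in the statement means the common kernel, i.e. $E$ itself, so $w=v_E\in E$ is exactly what is required.

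The only genuinely substantive point is extracting the uniform constant $c$, and this is immediate from the fact that a linear map between finite dimensional normed spaces is automatically bounded, applied to $\Phi^{-1}$ restricted to the compact cube; equivalently, one may take $c=\sup\{\|w\|\mid w\in U,\ |\eta_i(w)|\leq 1\text{ for all }i\}$ and check this supremum is finite because the constraint set is closed and bounded in $U$ (bounded since $\Phi$ is a linear injection on the finite dimensional space $U$, hence proper). There is no real obstacle; the main thing to get right is the bookkeeping between "$\eta_i^\perp$" read as orthogonal complement of a vector versus the common kernel, and the orthogonal splitting $V=E\oplus U$ that makes the decomposition $v=v_E+v_U$ available.
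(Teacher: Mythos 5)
Your proof is correct and is essentially the paper's argument: both decompose $V$ into the common kernel $H=\bigcap_i\ker\eta_i$ and a complement, use the constraint $|\eta_i(v)|\leq 1$ to confine the component of $v$ lying in the complement to a compact set, and take $w$ to be the $H$-component of $v$; the paper first treats the case $H=\{0\}$ via a dual basis while you invoke the bounded invertibility of the injective linear map $\Phi|_U$, but this is the same idea. One small cleanup: your opening sentence defines $U=\eta_1^\perp\cap\cdots\cap\eta_n^\perp$ and then immediately redefines $U$ as $E^\perp$, which is contradictory since $\eta_1^\perp\cap\cdots\cap\eta_n^\perp$ is the common kernel $E$ itself — you catch this later, but the first sentence should be rewritten so the notation is consistent from the start.
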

\begin{proof} 
	Recall that $\eta_i^\perp= \{ v \in V \; \vert \; \eta_i(v)=0\}$.
	First suppose $\eta_1^\perp\cap\cdots\cap\eta_n^\perp=\{0\}$.
	Then the $\eta_i$ generate the dual space $V^\vee$.
	We may assume that $\eta_1,\ldots,\eta_k$ is a basis
	for the dual space. Let $e_1,\ldots,e_k$ be the dual basis in $V$ associated to $\eta_1,\ldots,\eta_k$.
	If $v\in V$ is a vector with $|\eta_i(v)|\leq 1$ for all $i=1,\ldots,k$,
	then $v\in Q:=\{\sum_{i=1}^k e_i\lambda_i\mid \lambda_i\in[-1,1]\}\cong[-1,1]^k$. This set $Q\subseteq V$ is compact and hence bounded. 
	
	For the general case we put $H:=\eta_1^\perp\cap\cdots\cap\eta_n^\perp$ and we choose a complementary subspace $W\subseteq V$, such that $V=W\oplus H$. 
	The previous argument shows that every vector $v\in V$, 
	with $|\eta_k(v)|\leq 1$ for all $k=1,\ldots,n$,  is contained in $Q+H$, where $Q$ is compact.
	The claim follows.
\end{proof}

Our proof of Theorem~\ref{AsymHoro} will rely on the results about ultrapowers in the previous section.

The ultrapower $\ns V$ of $V$ is a finite dimensional vector space over $\ns \RR$, with dual space $(\ns V)^\vee\cong\ns (V^\vee)$.
We put \[\ns V_\fin=\{v\in\ns V\mid\ns||v||\in\ns\RR_\fin\}\] and
\[\ns V_\infi=\{v\in\ns V\mid\ns||v||\in\ns\RR_\infi\},\]
where $||.||$ is a Euclidean norm on $V$.
There is a split short exact sequence of $\ns \RR_\fin$-modules
\[
\begin{tikzcd}
0 \arrow{r} & \ns V_{\infi} \arrow{r} & \ns V_{\fin}\arrow{r}{\std} & V \arrow{r} \arrow[shift left]{l}{\inc}  & 0.
\end{tikzcd}
\]
By property (bL) we have
\begin{equation}
\label{equ::V_fin_inf}
\ns V_\fin=\{v\in\ns V\mid\ns\nu(v)\in\ns\RR_\fin\}\quad\text{ and }\quad
\ns V_\infi=\{v\in\ns V\mid\ns\nu(v)\in\ns\RR_\infi\}.
\end{equation}
By Theorem~\ref{Thm41}, the horofunctions are the maps
\[
h_p(u)=\std(\ns \delta(p,u)-\ns \delta(p,o))=\std(\ns\nu(p-u)-\ns \nu(p)),
\]
for $p\in\ns V$ and $u\in V$.
Our goal is now to analyze these horofunctions more closely. 
We define some more combinatorial data.

\begin{Def}
	For $k,\ell\in K$ we put \[H_{k,\ell}:=(\xi_k-\xi_\ell)^\perp =\{v\in V\mid\xi_k(v)=\xi_\ell(v)\} \subseteq V.\]
	For a nonempty subset $L\subseteq K$ we put
	\[
	H_L:=\bigcap_{k,\ell\in L} H_{k,\ell}=\{u\in V\mid \xi_k(u)=\xi_\ell(u)\text{ for all }k,\ell\in L\}.
	\]
\end{Def}

If $k\neq\ell$, then the set $H_{k,\ell}$ is a hyperplane in $V$. If $L=\{k\}$ then $H_L=V$ and if $L=K$ then $H_L= \{0\}$.
If $L\subseteq K$ is a dual face, then $H_L$ is the linear subspace of $V$ which intersects the affine span $F_L$
of $\{\xi_\ell\mid \ell\in L\}$ orthogonally (if we identify $V^\vee$ with $V$ via the Euclidean inner product).
In the case where $L$ is a dual face, we have thus 
\[
\dim(V)=\dim(H_L)+\dim(F_L).
\]
All these objects $\xi_i,H_{k,\ell}$ etc.~extend in a natural way as $\ns\xi_i,\ns H_{k,\ell}$ etc.~to the ultrapower $\ns V$ of $V$
which we consider now.

\begin{Lem}\label{Crucial}
	Let $L\subseteq K$ be a nonempty set. Then
	\[
	\ns H_L+\ns V_\fin=\bigcap_{k,\ell\in L}(\ns H_{k,\ell}+\ns V_\fin).
	\]
\end{Lem}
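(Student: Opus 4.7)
The inclusion $\subseteq$ is immediate, since $\ns H_L\subseteq\ns H_{k,\ell}$ for each pair $k,\ell\in L$ (the ultrapower commutes with finite intersections).

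For the nontrivial inclusion $\supseteq$, take $v\in\bigcap_{k,\ell\in L}(\ns H_{k,\ell}+\ns V_\fin)$. For each pair $(k,\ell)$ write $v=w_{k,\ell}+u_{k,\ell}$ with $w_{k,\ell}\in\ns H_{k,\ell}$ and $u_{k,\ell}\in\ns V_\fin$. Since $(\ns\xi_k-\ns\xi_\ell)(w_{k,\ell})=0$, we get
\[
(\ns\xi_k-\ns\xi_\ell)(v)=(\ns\xi_k-\ns\xi_\ell)(u_{k,\ell})\in\ns\RR_\fin,
\]
because $u_{k,\ell}\in\ns V_\fin$ and the functionals $\xi_k,\xi_\ell$ are (standard) continuous. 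Since $L$ is a finite subset of $K$, the finitely many values $|(\ns\xi_k-\ns\xi_\ell)(v)|$, $k,\ell\in L$, admit a common bound $r\in\ns\RR_\fin$.

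Now apply Lemma~\ref{LittleLemma} to the (standard, finite) family of linear functionals $\{\xi_k-\xi_\ell\mid k,\ell\in L\}$, whose common kernel is exactly $H_L$. The lemma yields a constant $c>0$ such that the first-order statement ``for every $v\in V$ with $|(\xi_k-\xi_\ell)(v)|\leq 1$ for all $k,\ell\in L$, there exists $w\in H_L$ with $d(v,w)\leq c$'' holds. Scaling by $r$ (or equivalently applying this to $v/r$ and multiplying through) gives the first-order statement ``for every $v$ with $|(\xi_k-\xi_\ell)(v)|\leq r$ for all $k,\ell\in L$, there exists $w\in H_L$ with $d(v,w)\leq c r$''. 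By \L os' theorem this transfers to $\ns V$: there exists $w\in\ns H_L$ with $\ns d(v,w)\leq c r$.

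Since $c\in\RR$ and $r\in\ns\RR_\fin$, we have $c r\in\ns\RR_\fin$, so $v-w\in\ns V_\fin$ by the characterization (\ref{equ::V_fin_inf}) (using property (bl) to pass between $\ns\nu$ and the euclidean distance). Therefore $v=w+(v-w)\in\ns H_L+\ns V_\fin$, as required. The only delicate point is the appropriate scaling of Lemma~\ref{LittleLemma} before transfer; everything else is a direct consequence of finiteness of $L$ and \L os' theorem.
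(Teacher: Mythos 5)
Your proof follows essentially the same strategy as the paper's: reduce to boundedness of $(\ns\xi_k-\ns\xi_\ell)(v)$, invoke Lemma~\ref{LittleLemma} on the differences of the functionals, transfer via \L os, and conclude. Two small points of care: the paper first disposes of the case $|L|=1$ (where the claim is trivial but Lemma~\ref{LittleLemma} does not apply, since the relevant family of functionals $\{\xi_k-\xi_\ell\mid k\neq\ell\}$ is empty), and restricts to $k>\ell$ so that only \emph{nonzero} functionals are fed into Lemma~\ref{LittleLemma}; your formulation includes the zero functional $\xi_k-\xi_k$, which that lemma excludes by hypothesis. Also, the phrase ``scaling by $r$ gives the first-order statement'' is loose when $r$ is a nonstandard real — one must either universally quantify over the bound before transfer, or (as your parenthetical correctly suggests and as the paper does, taking $r=n\in\NN$ standard) apply the transferred unit-bound statement to $v/r$ — but your alternate wording already makes this precise, so the argument stands.
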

\begin{proof}
	The claim is true if $L$ consists of a single element, so we may assume
	that $L$ contains at least two elements.
	Also, the left-hand side is contained in the right-hand side, since
	\[\ns H_L+\ns V_\fin\subseteq \ns H_{k,\ell}+\ns V_\fin\]
	for $k,\ell\in L$.
	
	Let $v \in \bigcap_{k,\ell\in L}(\ns H_{k,\ell}+\ns V_\fin)$. 
	We claim that $v\in \ns H_L+\ns V_\fin$.
	Let $c> 0$ be the real constant from Lemma~\ref{LittleLemma}, for the 
	set of linear forms $\{\xi_k-\xi_\ell\mid k,\ell\in L\text{ and }k>\ell\}$.
	Since $v\in \ns H_{k,\ell}+\ns V_\fin$ holds for $k,\ell\in L$, we have that 
	$(\ns\xi_k-\ns\xi_\ell)(v) \in \ns\RR_\fin$.   
	Therefore there exists an integer $n>0$ such that for every $k,\ell\in L$ 
	\[ \ns |(\ns\xi_k-\ns\xi_\ell)(v)|  \leq n.\]
	Then $\ns |(\ns\xi_k-\ns\xi_\ell)(\frac{1}{n}v)|  \leq 1$.
	By \L os' Theorem, there exists $w\in \ns H_L$ such that $\ns||w-\frac{1}{n}v||\leq c$.
	Thus $nw-v\in \ns V_\fin$.
\end{proof}

\begin{Def}
	We say that two nonstandard reals $s,t\in\ns\RR$ have the \emph{same order of magnitude}, denoted by $s\approx t$ if 
	$s-t\in\ns\RR_{\fin}$.
	Since $\ns\RR_\fin\subseteq\ns \RR$ is a subgroup, this is an equivalence relation.
	For $p\in\ns V$ we put 
	\[
	K_p=\{k\in K\mid \ns\xi_k(p)\approx \ns\nu(p)\}
	\]
	and we note that 
	\begin{equation}
	\label{eq2}
	\ns\nu(p)=\max\{\ns\xi_k(p)\mid k\in K\}=\max\{\ns\xi_k(p)\mid k\in K_p\}=\ns\nu_{K_p}(p).
	\end{equation}
	If $p\in\ns V_\fin$, then $K_p=K$.
\end{Def}

\begin{Lem}
	\label{lem::56}
	For $p\in \ns V$ and $q\in p+\ns V_\fin$ we have \[K_q=K_p.\]
\end{Lem}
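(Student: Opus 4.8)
The plan is to show that passing from $p$ to $q=p+v$ with $v\in\ns V_\fin$ perturbs each of the quantities $\ns\xi_k(p)$ (for $k\in K$) and $\ns\nu(p)$ only by a finite nonstandard real, so that $\ns\xi_k(q)\approx\ns\xi_k(p)$ and $\ns\nu(q)\approx\ns\nu(p)$. Since $\approx$ is an equivalence relation, comparing $\ns\xi_k(q)$ with $\ns\nu(q)$ then reduces to comparing $\ns\xi_k(p)$ with $\ns\nu(p)$, and this gives $K_q=K_p$ at once.

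First I would record two Lipschitz-type estimates and transfer them to the ultrapower. For $\nu$ this is \cref{TriangleInEq} with $L=K$, namely $|\nu(p)-\nu(q)|\leq\gamma\|p-q\|$ for all $p,q\in V$; by \L os' Theorem the same holds for $\ns\nu$ on $\ns V$. For each $\xi_k$, being a continuous linear functional on the finite dimensional space $V$, there is a constant $c$ with $|\xi_k(u)|\leq c\|u\|$ for all $u\in V$, and this too transfers to $\ns\xi_k$. Hence, writing $v=q-p$, the hypothesis $v\in\ns V_\fin$, i.e. $\ns\|v\|\in\ns\RR_\fin$, yields $\ns\xi_k(v)\in\ns\RR_\fin$ for every $k\in K$ and $\ns\nu(p)-\ns\nu(q)\in\ns\RR_\fin$.

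Next I would combine these facts. By linearity $\ns\xi_k(q)=\ns\xi_k(p)+\ns\xi_k(v)$, so $\ns\xi_k(q)\approx\ns\xi_k(p)$ for every $k$, and $\ns\nu(q)\approx\ns\nu(p)$ by the estimate just noted. Therefore, for each $k\in K$, transitivity of $\approx$ gives
\[
k\in K_q\iff\ns\xi_k(q)\approx\ns\nu(q)\iff\ns\xi_k(p)\approx\ns\nu(p)\iff k\in K_p,
\]
so $K_q=K_p$. I do not expect a genuine obstacle here; the only point requiring care is that both perturbation estimates land in the subgroup $\ns\RR_\fin$, which is exactly what makes the three equivalences above line up.
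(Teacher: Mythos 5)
Your proof is correct and follows essentially the same line as the paper's: show that $\ns\xi_k(q)\approx\ns\xi_k(p)$ for every $k\in K$ and $\ns\nu(q)\approx\ns\nu(p)$, then conclude via transitivity of $\approx$. The only minor difference is in how the finiteness of the perturbations is obtained: you transfer euclidean Lipschitz estimates (\cref{TriangleInEq} for $\nu$ and boundedness of each $\xi_k$), whereas the paper invokes Equation~(\ref{equ::V_fin_inf}) to get $\ns\nu(p-q),\ns\nu(q-p)\in\ns\RR_\fin$ and then uses the triangle inequality for $\ns\nu$ directly; both are valid and of comparable length.
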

\begin{proof}
	Since $p-q \in \ns V_\fin$ we have $\ns\nu(p-q),\ns\nu(q-p)\in\ns\RR_\fin$ by Equation~(\ref{equ::V_fin_inf}). 
	Now $ \ns\nu(p)  \leq \ns\nu(p-q) + \ns \nu(q)$ and $ \ns\nu(q) \leq \ns\nu(q-p) + \ns \nu(p)$, whence $\ns\nu(p)\approx\ns\nu(q)$.
	For all $k\in K$ we have $\ns\xi_k(p-q)\in\ns\RR_\fin$, whence $\ns \xi_k(p)\approx\ns\xi_k(q)$.
	The claim follows.
\end{proof}

We record at this stage the following.

\begin{Lem}\label{intermediate}
	The normalized horofunctions of $V$ are the maps
	\[
	h_p(u)=\std(\ns\nu_{K_p}(p-u)-\ns\nu_{K_p}(p)),
	\]
	for $p\in\ns V$.
\end{Lem}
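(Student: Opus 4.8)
The plan is to start from the description of the normalized horofunctions provided just above the statement, namely
\[
h_p(u)=\std(\ns\nu(p-u)-\ns\nu(p))\qquad(p\in\ns V,\ u\in V),
\]
and to replace $\ns\nu$ by $\ns\nu_{K_p}$ everywhere. The point $u$ is \emph{standard}, so $p-u\in p+\ns V_\fin$, and thus $K_{p-u}=K_p$ by Lemma~\ref{lem::56}. Applying Equation~(\ref{eq2}) to the vector $p-u$ in place of $p$ gives $\ns\nu(p-u)=\ns\nu_{K_{p-u}}(p-u)=\ns\nu_{K_p}(p-u)$. Likewise Equation~(\ref{eq2}) applied to $p$ itself yields $\ns\nu(p)=\ns\nu_{K_p}(p)$. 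Substituting both identities into the formula for $h_p(u)$ gives
\[
h_p(u)=\std(\ns\nu_{K_p}(p-u)-\ns\nu_{K_p}(p)),
\]
which is exactly the claimed form. Conversely, any map of this latter shape equals the corresponding $h_p$ by the same two substitutions read backwards, so the two families of maps coincide; combined with Theorem~\ref{Thm41}, which identifies the $h_p$ with the normalized horofunctions, this proves the lemma.

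There is really only one substantive point to check, and it is the first step: the equality $K_{p-u}=K_p$. This is where Lemma~\ref{lem::56} is used, and the hypothesis that makes it applicable is that $u\in V$ is standard, hence $\ns\|u\|\in\ns\RR_\fin$, so $u\in\ns V_\fin$ and $p-u\in p+\ns V_\fin$. I expect this to be the only place where one must be careful, since everything else is a direct rewriting using the two already-established identities (\ref{eq2}) for $\ns\nu$ in terms of $\ns\nu_{K_\bullet}$. No new saturation or \L os-theoretic argument is needed here; the lemma is purely a bookkeeping consequence of the preceding results.
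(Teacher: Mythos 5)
Your proposal is correct and follows essentially the same route as the paper: invoke Lemma~\ref{lem::56} to see that $K_{p-u}=K_p$ (since $u$ is standard, so $p-u\in p+\ns V_\fin$), then apply Equation~(\ref{eq2}) to both $p-u$ and $p$ to replace $\ns\nu$ by $\ns\nu_{K_p}$. The paper's own proof is the same two-step substitution, just stated more tersely.
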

\begin{proof}
	This is true for $u\in V$, since $K_{p-u}=K_p$ by Lemma~\ref{lem::56}, and since $\ns\nu(p-u)=\ns\nu_{K_{p-u}}(p-u)$
	by Equation (\ref{eq2}).
\end{proof}

\begin{Def}
	We write $s\gg 0$ if $s\in\ns\RR$ is a nonstandard real with $s>n$ for all $n\in\NN$
	(an infinitely large nonstandard real), and we write $s\gg t$ if $s-t\gg 0$.
	For a subset $L\subseteq K$ we put 
	\[
	\ns H_L^\larg: =\{v\in\ns H_L\mid \ns\xi_\ell(v)\gg\ns\xi_k(v)\text{ for all }\ell\in L\text{ and all }k\in K\setminus L\}.
	\]
	If $q\in\ns H_L^\larg$, then $K_q=L$. Note that $\ns H_K^\larg=\{0\}$.
\end{Def}

\begin{Lem}\label{CarryOn}
	For $p\in \ns V$ we have 
	\[
	p\in\ns H_{K_p}^\larg+\ns V_\fin.
	\]
	In particular, $\ns H_{K_p}^\larg\neq\emptyset$.
\end{Lem}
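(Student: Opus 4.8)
The plan is to first replace $p$ by a representative lying in $\ns H_{K_p}$, and then to observe that such a representative is automatically ``large'' simply because of the definition of $K_p$. Write $L=K_p$ throughout.

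\textbf{Step 1.} First I would show $p\in\ns H_L+\ns V_\fin$ by applying \cref{Crucial}. For this I must verify that $p\in\ns H_{k,\ell}+\ns V_\fin$ for each pair $k,\ell\in L$. The case $k=\ell$ is trivial, since $\ns H_{k,\ell}=\ns V$. If $k\neq\ell$, then $H_{k,\ell}$ is a hyperplane, so $\xi_k-\xi_\ell$ is a nonzero functional and I may pick $u\in V$ with $(\xi_k-\xi_\ell)(u)=1$. By the definition of $K_p$, both $\ns\xi_k(p)$ and $\ns\xi_\ell(p)$ have the same order of magnitude as $\ns\nu(p)$, so $t:=(\ns\xi_k-\ns\xi_\ell)(p)=(\ns\xi_k(p)-\ns\nu(p))-(\ns\xi_\ell(p)-\ns\nu(p))\in\ns\RR_\fin$; then $p-tu\in\ns H_{k,\ell}$ while $tu\in\ns V_\fin$, so $p\in\ns H_{k,\ell}+\ns V_\fin$ as required. (This is just the $n=1$ instance of \cref{LittleLemma}.) \cref{Crucial} now provides a splitting $p=q+v$ with $q\in\ns H_L$ and $v\in\ns V_\fin$.

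\textbf{Step 2.} Next I would argue that this $q$ already lies in $\ns H_L^\larg$. Since $q-p=-v\in\ns V_\fin$, \cref{lem::56} gives $K_q=K_p=L$. As $q\in\ns H_L$, all the values $\ns\xi_\ell(q)$ for $\ell\in L$ agree and equal $\ns\nu_L(q)$, which by \eqref{eq2} applied to $q$ (using $K_q=L$) equals $\ns\nu(q)$. On the other hand, for any $k\in K\setminus L=K\setminus K_q$ we have $\ns\xi_k(q)\leq\ns\nu(q)$ while $\ns\xi_k(q)\not\approx\ns\nu(q)$, so $\ns\nu(q)-\ns\xi_k(q)\gg0$; that is, $\ns\xi_\ell(q)=\ns\nu(q)\gg\ns\xi_k(q)$ for every $\ell\in L$ and every $k\in K\setminus L$. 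This is precisely the condition defining $\ns H_L^\larg$, so $q\in\ns H_{K_p}^\larg$. Therefore $p=q+v\in\ns H_{K_p}^\larg+\ns V_\fin$, and $q$ witnesses $\ns H_{K_p}^\larg\neq\emptyset$.

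There is no serious difficulty here; the one point that needs a moment's care is verifying the hypothesis of \cref{Crucial} in Step 1 --- that finiteness of the pairwise differences $(\ns\xi_k-\ns\xi_\ell)(p)$ for $k,\ell\in K_p$ really pushes $p$ into each $\ns H_{k,\ell}+\ns V_\fin$. Everything after that, namely Step 2, is bookkeeping with orders of magnitude and \eqref{eq2}. One could alternatively bypass \cref{Crucial} entirely and apply \cref{LittleLemma} directly to the finite family $\{\xi_k-\xi_\ell\mid k,\ell\in L\}$ after rescaling $p$ so that these functionals take values in $[-1,1]$, but routing through \cref{Crucial} keeps the argument shortest.
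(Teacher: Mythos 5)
Your proposal is correct and follows essentially the same route as the paper: verify the hypothesis of \cref{Crucial} from the defining property of $K_p$, obtain the splitting $p=q+v$, and then show the $\ns H_{K_p}$-component is automatically large because indices outside $K_p$ give values not of the same magnitude as $\ns\nu(p)$. The only cosmetic difference is that the paper handles $|K_p|=1$ as a separate case and checks largeness directly on $p_1$ via $\ns\xi_k(p_1)\approx\ns\xi_k(p)$, whereas you treat all cases uniformly and route the same observation through \cref{lem::56} and Equation~\eqref{eq2}.
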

\begin{proof}
	If $K_p=\{k\}$, then $\ns H_{\{k\}}=\ns V$. Moreover, $\ns\xi_k(p)\gg\ns\xi_\ell(p)$ for all $\ell\neq k$ and thus 
	$p\in\ns H_{\{k\}}^\larg$.
	If $k,\ell\in K_p$ are different indices, then $\ns\xi_k(p)\approx\ns\xi_\ell(p)$ and thus $p\in\ns H_{k,\ell}+\ns V_\fin$. 
	By Lemma~\ref{Crucial} we have
	$p=p_1+p_2$, with $p_1\in \ns H_{K_p}$ and $p_2\in\ns V_\fin$. Suppose that $k\in K_p$ and $\ell\in K\setminus K_p$.
	Then $\ns\xi_k(p_1)\approx\ns\xi_k(p)\gg\ns\xi_\ell(p)\approx\ns\xi_\ell(p_1)$. Thus $p_1\in\ns H_{K_p}^\larg$
	and $p\in\ns H_{K_p}^\larg+\ns V_\fin$.
\end{proof}

Note that $p\in\ns V_\fin\text{ if and only if }K_p=K.$
For the remaining points $p\in\ns V$ we have the following result.

\begin{Lem}
	\label{lem::dual_face_K_p}
	For every $p\in \ns V\setminus\ns V_\fin$, the set $K_p$ is a dual face.
\end{Lem}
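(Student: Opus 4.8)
I want to show that for $p \in \ns V \setminus \ns V_\fin$, the set $L := K_p$ is a dual face, i.e.\ there exists $v \in V$ with $\nu(v) = 1$ and $L = \{k \in K \mid \xi_k(v) = 1\}$. The natural candidate for $v$ is obtained by rescaling $p$ down to standard size and taking its standard part. More precisely, by Lemma~\ref{CarryOn} we may write $p = p_1 + p_2$ with $p_1 \in \ns H_L^\larg$ and $p_2 \in \ns V_\fin$, and $K_{p_1} = L$ as well. Since $p \notin \ns V_\fin$, we have $\ns\nu(p) \gg 0$, and by Lemma~\ref{lem::56} also $\ns\nu(p_1) = \ns\nu_L(p_1) \gg 0$ is infinitely large (it equals $\max_{\ell \in L} \ns\xi_\ell(p_1)$). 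Set $t = \ns\nu(p_1) \in \ns\RR$ and consider $\frac{1}{t}p_1 \in \ns V$. Then $\ns\nu(\frac1t p_1) = 1$, so $\frac1t p_1 \in \ns B$; since $B$ is compact, $\ns B = \bigcup_{b \in B} (b + \ns V_\infi)$, hence $\frac1t p_1 \in \ns V_\fin$ and $v := \std(\frac1t p_1) \in B$ is well-defined with $\nu(v) = \std(\ns\nu(\frac1t p_1)) = 1$ by continuity of $\nu$.

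**Identifying the active functionals.** It remains to check that $\{k \in K \mid \xi_k(v) = 1\} = L$. For $\ell \in L$: since $\ell \in K_{p_1}$, we have $\ns\xi_\ell(p_1) \approx \ns\nu(p_1) = t$, i.e.\ $t - \ns\xi_\ell(p_1) \in \ns\RR_\fin$. Dividing by $t$, which is infinitely large, gives $1 - \ns\xi_\ell(\frac1t p_1) = \frac{t - \ns\xi_\ell(p_1)}{t} \in \ns\RR_\infi$, so $\std(\ns\xi_\ell(\frac1t p_1)) = 1$, which means $\xi_\ell(v) = 1$. For $k \in K \setminus L$: because $p_1 \in \ns H_L^\larg$ and $L = K_{p_1}$, we have (taking any $\ell \in L$) $\ns\xi_\ell(p_1) \gg \ns\xi_k(p_1)$, and moreover $\ns\xi_k(p_1)$ is not of the same order of magnitude as $t$ — indeed $k \notin K_{p_1}$ means precisely $\ns\xi_k(p_1) \not\approx \ns\nu(p_1) = t$, and since $\ns\xi_k(p_1) \leq t$ always, we get $t - \ns\xi_k(p_1) \gg 0$. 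Hence $1 - \ns\xi_k(\frac1t p_1) = \frac{t - \ns\xi_k(p_1)}{t}$; I need this standard part to be strictly positive so that $\xi_k(v) < 1$. This follows since $\ns\xi_k(\frac1t p_1) \leq 1$ (as $\frac1t p_1 \in \ns B$) and we must rule out $\std(\ns\xi_k(\frac1t p_1)) = 1$; but if that standard part were $1$, then $t - \ns\xi_k(p_1) \in \ns\RR_\infi \cdot t \subseteq$ (something growing slower than $t$), and combined with $\ns\xi_k(p_1) \leq t$ one can derive $k \in K_{p_1}$ after all — I should phrase this carefully, essentially: $\std(\ns\xi_k(\frac1t p_1)) = 1 \iff t - \ns\xi_k(p_1) \ll t$; but for $k \notin K_{p_1}$ we have $\ns\xi_k(p_1) \ll t$, hence $t - \ns\xi_k(p_1) \approx t$ in the strong sense, and its quotient by $t$ has standard part $1$ — wait, that's the wrong direction.

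**The main obstacle.** The subtle point — and where I expect the real work to be — is showing $\xi_k(v) < 1$ strictly for $k \notin L$. It is \emph{not} enough that $\ns\xi_k(p_1) \ll t$: for instance if $\ns\xi_k(p_1) = t - \sqrt{t}$, then $\ns\xi_k(\frac1t p_1) = 1 - \frac{1}{\sqrt t}$, whose standard part is still $1$, yet $k \notin K_{p_1}$ could fail to hold in the $\approx$ sense only if $\sqrt t \in \ns\RR_\fin$, which it isn't. So actually $\ns\xi_k(p_1) = t - \sqrt t$ gives $t - \ns\xi_k(p_1) = \sqrt t \gg 0$, so $k \notin K_{p_1}$, but $v = \std(\frac1t p_1)$ would have $\xi_k(v) = 1$. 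This shows the naive choice $t = \ns\nu(p_1)$ and $v = \std(\frac1t p_1)$ is \textbf{not quite right}, and one must instead perturb: the correct statement is that $K_p$ being a dual face requires finding \emph{some} standard $v$, and the right move is to use that $\{k : \xi_k(v) = 1\}$ for $v$ in the relative interior of a face of $B$ is exactly a dual face, so I should choose $v$ in the relative interior of the smallest face of $B$ containing $\std(\frac1t p_1)$. Concretely: let $v_0 = \std(\frac1t p_1) \in B$ and let $F$ be the unique face of $B$ with $v_0$ in its relative interior; then $\{k : \xi_k(v_0) = 1\} = \{k : \xi_k|_F \equiv 1\}$ is a dual face $L'$, and I must show $L' = L = K_p$. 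The inclusion $L \subseteq L'$ is the easy direction above. For $L' \subseteq L$, suppose $\xi_k(v_0) = 1$; I need $k \in K_p$, i.e.\ $\ns\xi_k(p) \approx \ns\nu(p)$. This is where Lemma~\ref{Crucial} and the structure $p \in \ns H_L^\larg + \ns V_\fin$ must be leveraged more carefully — possibly by redoing the rescaling argument relative to the subspace $H_{L'}$, or by an argument showing that $\xi_k(v_0) = 1$ forces $\ns\xi_k(p_1)$ to track $t$ up to $o(t)$ corrections that are actually forced by the linear-algebraic constraints defining $\ns H_L$ to be $\approx t$. I would structure the final proof around first establishing $p \in \ns H_L^\larg + \ns V_\fin$ (already done in Lemma~\ref{CarryOn}), then showing that for $q \in \ns H_L^\larg$ with $\ns\nu(q)$ infinitely large, the vector $\std(\ns\nu(q)^{-1} q)$ lies in the relative interior of the face $\{k : \xi_k = 1\}$-face cut out by exactly the indices in $L$, using that $\ns\xi_\ell(q)/\ns\nu(q) \to 1$ for $\ell \in L$ while the $\gg$ relation forces, for $k \notin L$, that $\ns\xi_k(q)$ cannot approach $\ns\nu(q)$ — and here I would invoke a compactness/Lemma~\ref{LittleLemma}-type argument to control the rate, or simply note that it suffices to exhibit \emph{one} good $v$ and appeal to the fact that in a polytope the set of "tight" functionals at any point is a dual face, combined with showing $L \subseteq \{k : \xi_k(v) = 1\}$ and that no larger dual face can occur since $p_1 \in \ns H_L^\larg$ already pins $K_{p_1} = L$. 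In the write-up I will present the clean version: show $v := \std(t^{-1} p_1) \in B$ with $\nu(v) = 1$, show $\xi_\ell(v) = 1$ for all $\ell \in L$, conclude $L$ is contained in the dual face $L_v := \{k : \xi_k(v) = 1\}$, and then close the gap by observing that $H_{L_v} \supseteq$ the line spanned by any lift behaves compatibly — the honest obstacle is exactly this last closing step, and I expect to need the hypothesis $p \notin \ns V_\fin$ together with a dimension/Lemma~\ref{LittleLemma} argument to rule out the "$\sqrt t$" pathology by choosing the scaling and the representative $p_1$ more carefully (e.g.\ replacing $p_1$ by a vector in $\ns H_{L_v}^\larg$).
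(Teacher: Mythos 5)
Your diagnosis of the difficulty is exactly right, and your counterexample-in-spirit ($\ns\xi_k(p_1)=t-\sqrt t$ giving $\std(\ns\xi_k(\tfrac1tp_1))=1$ for $k\notin K_p$) shows precisely why the naive plan $v=\std(\tfrac1tp_1)$ fails: the standard part can land on a \emph{larger} face of $B$ than $K_p$. But the proposal never closes this gap, and the various fixes you gesture at (perturbing $v$ into a relative interior, controlling rates via Lemma~\ref{LittleLemma}, choosing $p_1$ more carefully) do not in fact produce a standard witness $v\in V$ with $\{k:\xi_k(v)=1\}$ equal to $K_p$ from $\std(\tfrac1tp_1)$ alone --- that information has already been lost in passing to the standard part.

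The paper's proof uses the same decomposition $p=p_1+p_2$, $p_1\in\ns H_{K_p}^\larg$, and the same rescaled vector $q=\tfrac{1}{\ns\nu(p_1)}p_1$, but then \emph{never takes a standard part}. Instead it observes that since $p_1\in\ns H_{K_p}$, the values $\ns\xi_\ell(p_1)$ are literally equal for all $\ell\in K_p$, so $\ns\xi_\ell(q)=1$ exactly (not just infinitesimally close to $1$), while for $k\notin K_p$ one has $\ns\xi_k(q)<1$ strictly in $\ns\RR$ because $p_1\in\ns H_{K_p}^\larg$. Hence $K_p=\{k\in K\mid\ns\xi_k(q)=1\}$ holds on the nose in $\ns V$. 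Now ``$L$ is a dual face'' is, for each fixed finite $L\subseteq K$, a first-order sentence ($\exists v\colon\nu(v)=1\wedge\bigwedge_{\ell\in L}\xi_\ell(v)=1\wedge\bigwedge_{k\notin L}\xi_k(v)\ne 1$), so by \L os' Theorem it holds in $V$ if and only if it holds in $\ns V$; the vector $q$ witnesses it in $\ns V$, hence $K_p$ is a dual face in $V$. This transfer step is the missing ingredient in your write-up: rather than trying to construct an explicit standard witness from $q$ (which, as you noticed, may be impossible directly), one simply notes that the \emph{existence} of a nonstandard witness is, by transfer, equivalent to the existence of a standard one.
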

\begin{proof}
	We put $p=p_1+p_2$, with $p_1\in\ns H_{K_p}^\larg$ and $p_2\in\ns V_\fin$, as in Lemma~\ref{CarryOn}. 
	Then $K_p=K_{p_1}$ by Lemma~\ref{lem::56}.
	Also, $p_1\neq 0$ because $p\not\in \ns V_\fin$. We put $q=\frac{1}{\ns\nu(p_1)}p_1$. Then $\ns\nu(q)=1$
	and \[\textstyle K_p=K_{p_1}=\{k\in K\mid\ns \xi_k(q)=1\},\]
	because $\ns\xi_k(p_1)\gg\ns\xi_\ell(p_1)$ holds for all $\ell\in K\setminus K_{p_1}$ and all $k\in K_{p_1}$.
	\L os' Theorem shows that a subset $L\subseteq K$ is a dual face if and only if 
	there exists $v\in\ns B$ with $\ns\nu(v)=1$, such that 
	$L=\{\ell\in K\mid\ns\xi_\ell(v)=1\}$. Hence $K_p$ is a dual face.
\end{proof}

We are now ready to prove Theorem~\ref{AsymHoro}. 

\begin{proof}[Proof of Theorem~\ref{AsymHoro}.]\label{proof}
By Lemma \ref{intermediate}, a normalized horofunction for $V$ with respect to the asymmetric metric $\delta$ is of the form 
$h_p(u)=\std(\ns\nu_{K_p}(p-u)-\ns\nu_{K_p}(p))$, for some $q \in \ns V$. 
	Let $u\in V$.
	If $q\in\ns V_\fin$,  we put $p=\std(q)$.
	Then \[\std(\ns\nu(q-u))=\std(\ns\nu(p-u))=\nu(p-u)\] by Equation~(\ref{equ::V_fin_inf})
	and thus \[h_q(u)=\std(\ns \nu(q-u)-\ns\nu(q))=\nu(p-u)-\nu(p).\]
	Suppose now that $q\in \ns V\setminus\ns V_\fin$.
	Then $q=q_1+q_2$, with $q_1\in\ns H_{K_q}^\larg$ and $q_2\in\ns V_\fin$.
	We put $p=\std(q_2)$ and $x=\ns\nu(q_1)\gg0$, using Lemma~\ref{CarryOn}.
	For all $k\in K_q$ we have $\ns\xi_k(q_1+q_2-u)=x+\ns\xi_k(q_2-u)$ and thus 
	\[\ns\nu(q-u)=\ns\nu_{K_q}(q_1+q_2-u)=x+\ns\nu_{K_q}(q_2-u).\]
	Similarly,
	$\ns\nu(q)=\ns\nu_{K_q}(q_1+q_2)=x+\ns\nu_{K_q}(q_2)$ and therefore
	\[
	\ns\nu(q-u)-\ns\nu(q)=\ns\nu_{K_q}(q_2-u)-\ns\nu_{K_q}(q_2).
	\]
	Hence
	\[
	h_q(u)=\std(\ns\nu(q-u)-\ns\nu(q))=\std(\ns\nu_{K_q}(q_2-u)-\ns\nu_{K_q}(q_2))=\nu_{K_q}(p-u)-\nu_{K_q}(p).
	\]
	This shows that all horofunctions are as claimed in Theorem~\ref{AsymHoro}.
	
	Conversely, we claim that each of these functions is indeed a horofunction.
	This is clear by definition for the functions 
	\[
	u\longmapsto\nu(p-u)-\nu(p), \text{ for }p\in V.
	\]
	Suppose that $L\subseteq K$ is a dual face. We fix a vector $w\in V$
	with $\nu(w)=1$ such that $L=\{\ell\in K\mid\xi_\ell(w)=1\}$.
	There exists $\eps>0$ such that $\xi_k(w)<1-\eps$ for all 
	$k\in K\setminus L$.
	We choose a nonstandard real $t\gg 0$ and we put $q=tw$.
	Since $t\eps\gg0$, we have $K_q=L$ and 
	$\ns\nu_L(q-u)=t+\ns\nu_L(-u)$ for all $u\in V$, whence
	\[
	h_q(u)=\nu_L(-u).
	\]
	Hence this map is a horofunction. But translation by $-p$ is an isometry in
	$\Isom_\delta(V)$ and thus 
	\[
	u\longmapsto \nu_L(p-u)-\nu_L(p)
	\]
	is also a horofunction, for all $p\in V$.
	This completes the proof of Theorem~\ref{AsymHoro}.
\end{proof}

\begin{Rem}\label{ARemark}
	The previous proof gives us in addition the following. For $p\in \ns V\setminus\ns V_\fin$
	and $u\in V$ we have 
	\begin{equation}\label{h_pHorofunction}
	h_p(u)=\nu_L(q-u)-\nu_L(q),
	\end{equation}
	where $L=K_p$ and $q=\std(p_2)$ in the decomposition $p=p_1+p_2$, with 
	$p_1\in H_{K_p}^\larg$ and $p_2\in \ns V_\fin$ as in Lemma~\ref{CarryOn}.
\end{Rem}

We noted above that the abelian group $V\subseteq\Isom_\delta(V)$ acts on the set of normalized horofunctions. 
To fix some notation, we put 
\[
\tau_w(x)=w+x,
\]
for $x,w\in V$.
Now we calculate the $V$-stabilizers of the normalized horofunctions. 
Since $V$ is abelian and acts transitively on the 
sets 
\[
\{v\longmapsto\nu(p-v)-\nu(p)\mid p\in V\}\quad\text{ and }\quad
\{v\longmapsto\nu_L(p-v)-\nu_L(p)\mid p\in V\},\] 
it suffices to do this for the horofunctions
\[
v\longmapsto\nu(-v)\quad\text{ and }\quad
v\longmapsto\nu_L(-v),
\]
where $L$ is any dual face.
The first horofunction has $0$ as its unique minimum. Hence its $V$-stabilizer is trivial.
To analyze the second case, we put, for $k\in L$,
\[
C_{k,L}=\{v\in V\mid\xi_k(v)>\xi_\ell(v)\text{ for all }\ell\in L\setminus\{k\}\}.
\]

\begin{Lem}
	Let $L$ be a dual face, and $k\in L$. Then $C_{k,L}$ is a nonempty open set.
\end{Lem}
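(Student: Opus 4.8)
The plan is to show that $C_{k,L}$ is open because it is defined by finitely many strict linear inequalities, and nonempty because the definition of "dual face" provides a witness vector that, after a small perturbation, lands in $C_{k,L}$.

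\textbf{Openness.} For fixed $k\in L$, the set $C_{k,L}=\bigcap_{\ell\in L\setminus\{k\}}\{v\in V\mid(\xi_k-\xi_\ell)(v)>0\}$ is a finite intersection of open half-spaces (preimages of $(0,\infty)$ under the continuous linear functionals $\xi_k-\xi_\ell$), hence open in the standard topology of $V$.

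\textbf{Nonemptyness.} Since $L$ is a dual face, there is a vector $v_0\in V$ with $\nu(v_0)=1$ and $L=\{j\in K\mid\xi_j(v_0)=1\}$; in particular $\xi_\ell(v_0)=1$ for all $\ell\in L$, so $v_0$ lies on the boundary between the half-spaces rather than in their interior. The idea is to nudge $v_0$ in a direction that strictly increases $\xi_k$ relative to the other $\xi_\ell$, $\ell\in L$. Concretely, I would pick a vector $w\in V$ with $\xi_k(w)=1$ and $\xi_\ell(w)=0$ for all $\ell\in L\setminus\{k\}$; such a $w$ exists because, $L$ being a dual face, the functionals $\{\xi_\ell\mid\ell\in L\}$ are affinely independent (the affine span $F_L$ of $\{\xi_\ell\mid\ell\in L\}$ has dimension $|L|-1$, which is exactly the statement $\dim V=\dim H_L+\dim F_L$ recorded before Lemma~\ref{Crucial}, so the differences $\xi_\ell-\xi_k$ for $\ell\in L\setminus\{k\}$ are linearly independent, and one can prescribe the values of $\xi_k$ and of these differences on $w$ independently). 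Then for small $t>0$ the vector $v_0+tw$ satisfies $\xi_k(v_0+tw)=1+t>1=\xi_\ell(v_0+tw)$ for every $\ell\in L\setminus\{k\}$, so $v_0+tw\in C_{k,L}$.

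\textbf{The main obstacle} is making the affine-independence step clean. One must be careful that "dual face" as defined here is a combinatorial notion (a subset of $K$), and the link to affine independence of $\{\xi_\ell\mid\ell\in L\}$ comes through the geometric description: the proper faces of $B^\vee$ are the convex hulls of the $\{\xi_\ell\mid\ell\in L\}$ for dual faces $L$, and a face of a polytope is the convex hull of an affinely independent subset of the vertices it spans only after one knows $\xi_0,\dots,\xi_m$ are the vertices of $B^\vee$ — which holds here since each $A_j$ is a genuine codimension-$1$ face. Alternatively, and perhaps more robustly, I would avoid affine independence entirely: take the witness $v_0$ and any $v_0'\in V$ with $\nu(v_0')=1$ whose defining set is $\{k\}$ (such $v_0'$ exists because $\{k\}$ is always a dual face, being the index of the codimension-$1$ face $A_k$), and check that a convex combination $v_t=(1-t)v_0+tv_0'$ for small $t>0$ lies in $C_{k,L}$: for $\ell\in L\setminus\{k\}$ one has $\xi_k(v_t)-\xi_\ell(v_t)=t\bigl(\xi_k(v_0')-\xi_\ell(v_0')\bigr)>0$ since $\xi_k(v_0')=1>\xi_\ell(v_0')$. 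This second route is the one I would ultimately write up, as it uses only facts already in place.
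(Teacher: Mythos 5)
Your preferred route (the second one) is correct and is essentially the paper's proof: the paper simply picks a point $u$ in the relative interior of $A_k$, i.e.\ one with $\xi_k(u)=1$ and $\xi_\ell(u)<1$ for all $\ell\ne k$, and observes $u\in C_{k,L}$. Your convex combination $v_t=(1-t)v_0+tv_0'$ is a superfluous detour: since $\xi_k(v_0)=\xi_\ell(v_0)$ for $k,\ell\in L$, the $v_0$ contribution to $\xi_k(v_t)-\xi_\ell(v_t)$ vanishes for every $t$, so in particular $t=1$ works, meaning $v_0'$ itself already lies in $C_{k,L}$. So you may as well drop $v_0$ entirely and just use $v_0'$.

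Be aware, though, that your first route rests on a false premise. It is not true that the functionals $\{\xi_\ell\mid\ell\in L\}$ are affinely independent for a dual face $L$, and the identity $\dim V=\dim H_L+\dim F_L$ does not imply $\dim F_L=|L|-1$. For instance, if $B$ is a regular octahedron in $\RR^3$, then $B^\vee$ is a cube; a square face of the cube corresponds to a dual face $L$ with $|L|=4$ but $\dim F_L=2$, so the four functionals are affinely \emph{dependent}. The existence of a vector $w$ with $\xi_k(w)=1$ and $\xi_\ell(w)=0$ for all $\ell\in L\setminus\{k\}$ therefore fails in general, and route 1 as written would break. You were right to discard it; just note that the reason is stronger than "hard to make clean" — the affine-independence claim is simply wrong for general polytopes.
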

\begin{proof}
	Recall that $A_0,\ldots,A_m$ are the codimension-1-faces of $B$.
	We choose a point $u\in A_k$ such that $\xi_\ell(u)<1$ for all $\ell\in K\setminus\{k\}$.
		Therefore $u\in C_{k,L}$.
	It is clear from the definition that $C_{k,L}$ is open.
\end{proof}
The $C_{k,L}$ are thus nonempty open positive cones
\footnote{A \emph{cone} $C$ in a real vector space $W$ is a subsemigroup $C\subseteq W$, such that $sC\subseteq C$ holds for all $s>0$.}
in the vector space $V$, with 
\[
 V=\bigcup\{C_{k,L}\mid k\in L\}\cup\bigcup\{H_{k,\ell}\mid k,\ell\in L,k\neq \ell\}.
\]
In particular, $\bigcup\{C_{k,L}\mid k\in L\}$ is open and dense in $V$.

\begin{Lem}
	\label{lem::lem_stab_L}
	If $L$ is a dual face, then the $V$-stabilizer of $\nu_L$ is $H_L$.
\end{Lem}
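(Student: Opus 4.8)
The plan is to show the two inclusions $H_L \subseteq \mathrm{Stab}_V(\nu_L)$ and $\mathrm{Stab}_V(\nu_L) \subseteq H_L$ separately, working throughout with the explicit formula $(\tau_w \nu_L)(v) = \nu_L(-v + w) - \nu_L(w)$ for the action of the translation $\tau_w$ on the normalized horofunction $\nu_L$ (recall $\nu_L(v) = \nu_L(-(-v))$, so the horofunction ``$\nu_L$'' is really $v \mapsto \nu_L(-v)$ after unwinding the sign conventions above). For the easy inclusion, suppose $w \in H_L$, so that $\xi_k(w) = \xi_\ell(w)$ for all $k,\ell \in L$; call this common value $c$. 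Then for every $v \in V$ we have $\xi_\ell(-v + w) = \xi_\ell(-v) + c$ for all $\ell \in L$, hence $\nu_L(-v+w) = \max_{\ell \in L}\xi_\ell(-v+w) = c + \nu_L(-v)$, and also $\nu_L(w) = c$. Therefore $(\tau_w\nu_L)(v) = \nu_L(-v) + c - c = \nu_L(-v)$, i.e. $\tau_w$ fixes the horofunction $\nu_L$, proving $H_L \subseteq \mathrm{Stab}_V(\nu_L)$.

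For the reverse inclusion, suppose $w \in V$ fixes $\nu_L$, i.e. $\nu_L(-v+w) - \nu_L(w) = \nu_L(-v)$ for all $v \in V$. The idea is to probe this identity on the open cones $C_{k,L}$ introduced just before the lemma. Fix $k \in L$ and pick a point $v$ in the (nonempty, open) cone $-C_{k,L}$, so that $\xi_k(-v) > \xi_\ell(-v)$ for all $\ell \in L \setminus \{k\}$; replacing $v$ by a large positive multiple if necessary, arrange that the gap $\xi_k(-v) - \xi_\ell(-v)$ exceeds $|\xi_k(w) - \xi_\ell(w)|$ for every $\ell \in L$. Then on the translate $-v + w$ the $k$-th functional still strictly dominates, so $\nu_L(-v+w) = \xi_k(-v) + \xi_k(w)$ and $\nu_L(-v) = \xi_k(-v)$; the stabilizer identity then forces $\xi_k(w) = \nu_L(w)$. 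Since $k \in L$ was arbitrary, we get $\xi_k(w) = \nu_L(w)$ for all $k \in L$, which is exactly the condition $w \in H_L$.

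The main obstacle is the book-keeping in the second paragraph: one must ensure that the cone $C_{k,L}$ (equivalently $-C_{k,L}$) contains a vector on which the strict domination of $\xi_k$ over the other $\xi_\ell$, $\ell \in L$, survives the perturbation by the fixed vector $w$. This is handled by the scaling argument — $C_{k,L}$ is a cone, so it is closed under multiplication by positive scalars, and scaling up makes the strict gaps arbitrarily large while $w$ stays fixed — together with the fact (already recorded in the preceding lemma) that $C_{k,L}$ is nonempty and open. Everything else is the elementary observation that $\nu_L$ is a max of linear functionals, so its value, and the identity of the maximizing index, is locally determined and additive under translations lying in $H_L$.
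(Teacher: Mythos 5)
Your proof is correct and follows essentially the same strategy as the paper: the easy inclusion is a direct computation from the max-of-linear-functionals form of $\nu_L$, and the hard inclusion probes the stabilizer identity on the cones $C_{k,L}$ after ensuring (via the cone being open and closed under positive scaling) that the strict domination of $\xi_k$ survives the shift by $w$ — the paper phrases this as choosing $v\in C_{k,L}\cap(w+C_{k,L})$, you phrase it as scaling $v$ to overcome the fixed perturbation, but these are the same fact. The only cosmetic divergence is that the paper concludes $\xi_k(w)=\xi_\ell(w)$ for a chosen maximizer $\ell$, while you conclude $\xi_k(w)=\nu_L(w)$ directly; these are equivalent.
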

\begin{proof}
	For $w\in V$ we have 
	\[(\tau_{w}\nu_L)(u)=\nu_L(u-w)-\nu_L(-w).\]
	Hence if $w\in H_L$, then $\tau_w\nu_L=\nu_L$. Therefore $H_L$ is contained in
	the stabilizer of $\nu_L$.
	
	Let $w\in V$ and suppose $\tau_w\nu_L=\nu_L$.
	We choose $\ell\in L$ in such a way that $\nu_L(-w)=\xi_\ell(-w)$.
	Let $k\in L$ be arbitrarily. Since $C_{k,L}$ is an nonempty open cone,
	$C_{k,L}\cap (w+C_{k,L})\neq\emptyset$. Hence we may choose
	an element $v\in C_{k,L}$ in such a way that $v-w\in C_{k,L}$.
	Then $\nu_L(v)=\xi_k(v)$ and $\nu_L(v-w)=\xi_k(v-w)$.
	Since 
	\[
	\nu_L(v)=\nu_L(v-w)-\nu_L(-w),
	\]
	we conclude that
	\[
	\xi_k(v)=\xi_k(v)-\xi_k(w)-\xi_\ell(-w),
	\]
	whence $\xi_k(w)=\xi_\ell(w)$. Since $k\in L$ was chosen in an arbitrary way,
	$\xi_k(w)=\xi_\ell(w)$ holds for all $k\in L$, whence $w\in H_L$.
\end{proof}

\begin{Lem}\label{distinct}
	Let $L,L'\subseteq K$ be two different dual faces and let $p\in V$. Then 
	\[
	\nu\neq\tau_p\nu_L\neq\nu_{L'}.
	\]
\end{Lem}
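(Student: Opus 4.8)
The plan is to show that the shift parameter $p$ is a red herring: a normalized horofunction attached to a dual face $L$ determines, and is essentially determined by, the piecewise-linear function $\nu_L$, and $\nu_L$ in turn determines $L$. So I would prove the two inequalities uniformly. Write $\nu=\nu_K$; in the notation of Lemma~\ref{lem::lem_stab_L}, each of the three horofunctions $\nu$, $\tau_p\nu_L$, $\nu_{L'}$ has the form $u\mapsto\nu_M(u-q)-\nu_M(-q)$ for some $q\in V$ and some $M\in\{K,L,L'\}$ (with $q=0$ for $\nu$ and for $\nu_{L'}$). The first — and really the only substantial — step is: if $u\mapsto\nu_L(u-p)-\nu_L(-p)$ and $u\mapsto\nu_M(u-q)-\nu_M(-q)$ agree on all of $V$, then $\nu_L=\nu_M$ as functions. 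This follows from positive homogeneity: fix $u$, replace $u$ by $2^nu$, use $\nu_L(2^nu-p)=2^n\nu_L(u-2^{-n}p)$ (and the analogue for $\nu_M$), divide by $2^n$, and let $n\to\infty$; since $\nu_L$ and $\nu_M$ are Lipschitz, hence continuous, by Lemma~\ref{FiniteDimAsm}, the two sides converge to $\nu_L(u)$ and $\nu_M(u)$ respectively. Thus the lemma reduces to the combinatorial assertions $\nu_L\neq\nu_K$ for every dual face $L$, and $\nu_L\neq\nu_{L'}$ for distinct dual faces $L,L'$.

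For these I would use support-function duality. Identifying $V^\vee$ with $V$ via the euclidean inner product, $\nu_M(u)=\max_{k\in M}\langle\xi_k,u\rangle$ is the support function of the compact convex set $\mathrm{conv}\{\xi_k\mid k\in M\}$, and a compact convex set is determined by its support function; so $\nu_L=\nu_M$ forces $\mathrm{conv}\{\xi_k\mid k\in L\}=\mathrm{conv}\{\xi_k\mid k\in M\}$. When $M=K$ the right-hand side is all of $B^\vee$, whereas $\mathrm{conv}\{\xi_k\mid k\in L\}$ is a \emph{proper} face of $B^\vee$ by the description of the faces of $B^\vee$ given earlier in this section, hence $\neq B^\vee$; this contradiction yields $\nu\neq\tau_p\nu_L$. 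When $M=L'$ is again a dual face, I would note that the index set of a dual face can be recovered from the face it spans: if $N=\{k\in K\mid\xi_k(v)=1\}$ for some $v$ with $\nu(v)=1$, and if $\xi_j=\sum_{k\in N}\lambda_k\xi_k$ is a convex combination, then $\xi_j(v)=\sum_{k\in N}\lambda_k=1$, so $j\in N$; hence $N=\{j\in K\mid\xi_j\in\mathrm{conv}\{\xi_k\mid k\in N\}\}$. Applying this to both $L$ and $L'$, the equality $\mathrm{conv}\{\xi_k\mid k\in L\}=\mathrm{conv}\{\xi_k\mid k\in L'\}$ forces $L=L'$, contradicting $L\neq L'$, and so $\tau_p\nu_L\neq\nu_{L'}$.

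The one place demanding attention is the first step, i.e.\ recognising that rescaling eliminates the translation and reduces everything to the functions $\nu_M$ themselves; everything after that is elementary convex geometry — support-function duality together with the dictionary between dual faces and proper faces of $B^\vee$ that has already been set up. So I do not expect a genuine obstacle.
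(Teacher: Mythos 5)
Your proof is correct, but it takes a genuinely different route from the paper's. The paper proves the first inequality via a stabilizer argument (Lemma~\ref{lem::lem_stab_L}: the $V$-stabilizer of $\nu$ is trivial, while that of $\tau_p\nu_L$ is $H_L\neq\{0\}$), and the second inequality by a local argument with the open cones $C_{k,L}$, intersecting translated cones and then globalizing because two affine hyperplanes in $V\times\RR$ that agree on a nonempty open set coincide; this yields $L\subseteq L'$ and $L'\subseteq L$. You instead eliminate the translation outright by a rescaling argument ($u\mapsto 2^n u$, divide by $2^n$, pass to the limit using positive homogeneity and Lipschitz continuity of $\nu_L$), which reduces both inequalities to the single statement $\nu_L\neq\nu_M$ for $L\neq M$ in $\Sigma\cup\{K\}$. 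You then dispose of that by support-function duality: $\nu_M$ is the support function of $\mathrm{conv}\{\xi_k\mid k\in M\}$, a compact convex set is determined by its support function, $\mathrm{conv}\{\xi_k\mid k\in L\}$ is a proper face of $B^\vee=\mathrm{conv}\{\xi_k\mid k\in K\}$, and a dual face $N$ is recoverable from its span as $N=\{j\in K\mid \xi_j\in\mathrm{conv}\{\xi_k\mid k\in N\}\}$ (your computation $\xi_j(v)=\sum\lambda_k=1$ is exactly right). Your version is cleaner and more conceptual, and it nicely unifies the two inequalities; the paper's stabilizer/cone route is perhaps better aligned with the rest of the section, which keeps reusing the $H_L$, $C_{k,L}$, and $H_L^+$ machinery. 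One small slip: the Lipschitz continuity of $\nu_L$ is not Lemma~\ref{FiniteDimAsm} (which concerns genuine asymmetric norms, and $\nu_L$ for proper $L$ can take negative values) but rather Lemma~\ref{TriangleInEq}; this is cosmetic and does not affect the argument.
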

\begin{proof}
	The $V$-stabilizer of $\nu$ is trivial, while the $V$-stabilizer of 
	$\tau_p\nu_L$ is $H_L\neq\{0\}$. This shows the first inequality.
	
	Assume towards a contradiction that $\tau_p\nu_L=\nu_{L'}$.
	For every $k'\in L'$, there exists some $k\in L$ such that 
	$U=(p+C_{k,L})\cap C_{k',L'}$ is nonempty. For $u\in U$ we have 
	\begin{equation}\label{intersect}
	\xi_{k'}(u)=\xi_{k}(u-p)-\nu_L(-p).
	\end{equation}
	Since $U$ is open, Equation (\ref{intersect}) holds for all $u\in V$,
	because two affine hyperplanes in $V\times\RR$ which intersect in a nonempty 
	relatively open set are equal.
	Thus $k=k'$ and hence $L'\subseteq L$, Similarly, we have $L'\supseteq L$.
	This is a contradiction.
\end{proof}

\begin{Lem}\label{NormOnSubspace}
	Let $L$ be a dual face.
	Then there exists a linear subspace $W_L\subseteq V$ such that 
	$V=H_L\oplus W_L$ and such that $\nu_L$ is an asymmetric norm on $W_L$.
\end{Lem}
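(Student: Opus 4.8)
The plan is to exhibit $W_L$ as a complementary subspace to $H_L$ on which the negative cone $N_L$ degenerates to $\{0\}$, so that the criterion stated just before Lemma~\ref{TriangleInEq} (``if $W\cap N_L=\{0\}$, then $\nu_L|W$ is an asymmetric norm'') applies directly. Recall that $L$ is a dual face, so $H_L$ has codimension $\dim(F_L)$ in $V$, where $F_L$ is the affine span of $\{\xi_\ell\mid\ell\in L\}$; identifying $V^\vee$ with $V$ via the euclidean inner product, $H_L$ is the orthogonal complement of the linear span of the differences $\xi_k-\xi_\ell$, $k,\ell\in L$. First I would fix any index $k_0\in L$ and consider the linear span $U_L=\spann\{\xi_\ell-\xi_{k_0}\mid\ell\in L\}\subseteq V^\vee\cong V$; by construction $H_L=U_L^\perp$, so $V=H_L\oplus U_L$. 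The natural candidate is $W_L=U_L$.

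The key point to verify is that $W_L\cap N_L=\{0\}$, where $N_L=\{v\mid\xi_\ell(v)\le 0\text{ for all }\ell\in L\}$. Suppose $v\in W_L\cap N_L$ with $v\ne 0$. Since $v\in W_L=U_L=\spann\{\xi_\ell-\xi_{k_0}\mid\ell\in L\}$ (viewed inside $V$ via the inner product), $v$ is orthogonal to $H_L$; in particular $v\notin H_L$, so the values $\xi_\ell(v)$ for $\ell\in L$ are not all equal. Hence $\max_{\ell\in L}\xi_\ell(v)=\nu_L(v)>\min_{\ell\in L}\xi_\ell(v)$, and since $v\in N_L$ all these values are $\le 0$; thus $\min_{\ell\in L}\xi_\ell(v)<0$ strictly. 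The plan is to turn this into a contradiction by pairing $v$ with itself as an element of $U_L$: write $v=\sum_{\ell\in L}c_\ell(\xi_\ell-\xi_{k_0})$ under the identification, and observe that $\langle v,v\rangle=\sum_\ell c_\ell\bigl(\xi_\ell(v)-\xi_{k_0}(v)\bigr)$. This expression need not obviously be positive from the cone condition alone, so instead I would argue more robustly: choose a point $b$ in the relative interior of the dual face $F_L\cap B^\vee$, i.e. $b=\sum_{\ell\in L}\lambda_\ell\xi_\ell$ with all $\lambda_\ell>0$ and $\sum\lambda_\ell=1$, so that $b$ lies in $V^\vee$ and evaluates to $1$ on the corresponding primal vertex of $B$ realizing $L$. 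For $v\in N_L$ we have $b(v)=\sum_{\ell}\lambda_\ell\xi_\ell(v)\le 0$, with equality only if $\xi_\ell(v)=0$ for all $\ell\in L$, i.e. only if $v\in H_L$. But $v\in W_L\cap H_L=\{0\}$ forces $v=0$ in that case. In the remaining case $b(v)<0$; I would then show this is incompatible with $v\in W_L$. Here the cleanest route is: the functional $b-\xi_{k_0}$ annihilates nothing new, but the family $\{\xi_\ell-\xi_{k_0}:\ell\in L\}$ spans $W_L^\vee$ restricted appropriately — I would instead simply note that the affine span $F_L$ of $\{\xi_\ell\}$ meets $H_L^\perp=W_L$ (identifying) in a single point, and a short convexity/dimension count rules out a nonzero $v\in W_L$ lying in the polyhedral cone $N_L$, because $N_L$ is the polar cone of $\spann\{\xi_\ell:\ell\in L\}$ shifted, whose lineality space is exactly $H_L$.

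The cleanest formulation, which I would adopt, avoids the inner-product juggling: $N_L$ is a polyhedral cone whose \emph{lineality space} (the largest linear subspace it contains) is precisely $\{v:\xi_\ell(v)=0\ \forall\ell\in L\}$, and this equals $H_L$ when... no — actually $\{v:\xi_\ell(v)=0\ \forall\ell\in L\}$ is a subspace of $H_L$ of the same dimension only if all $\xi_\ell$, $\ell\in L$, vanish together on a codimension-$\dim F_L$ space, which holds because $0\in F_L$ iff... This subtlety (whether $0\in\spann$ vs $0\in\mathrm{aff}$) is the main obstacle. I would resolve it by working with the linear span $S_L=\spann\{\xi_\ell:\ell\in L\}\subseteq V^\vee$ and setting $W_L:=S_L^\perp{}^\perp$... more precisely $W_L:=\{v\in V: v\perp_{\text{Eucl.}} S_L^{\,0}\}$ where $S_L^{\,0}=\{v:\xi_\ell(v)=0\ \forall\ell\in L\}$ is the common kernel; then $V=S_L^{\,0}\oplus W_L$, and since $1\in F_L$-values means the $\xi_\ell$ are affinely independent of $0$ in the relevant sense, one checks $S_L^{\,0}=H_L$ exactly when $L$ is a dual face realized by a vector of norm $1$ — which is our standing hypothesis. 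With $W_L$ defined this way, $W_L\cap N_L=\{0\}$ follows because any $v\in W_L\cap N_L$ has all $\xi_\ell(v)\le 0$ with $v$ not in their common kernel, yet $v\in W_L$ means $v$ is a euclidean-combination of the $\xi_\ell$'s (via the identification), and evaluating $v$ against itself through the positive-coefficient barycentre $b$ of $F_L$ gives $\langle v,v\rangle_{\text{via }b}=b(v)\le 0$, forcing $v=0$. The main obstacle, as flagged, is pinning down the identity $S_L^{\,0}=H_L$, for which I would invoke that a dual face is by definition cut out by some $w$ with $\nu(w)=1$, so $\xi_\ell(w)=1$ for $\ell\in L$; then $\xi_\ell-\xi_{k_0}$ vanishes at $w$ for all $\ell$, showing $w\notin S_L^{\,0}$ unless... and a rank computation using $\dim V=\dim H_L+\dim F_L$ from the excerpt closes the gap. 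Once $W_L\cap N_L=\{0\}$ is established, the conclusion that $\nu_L|W_L$ is an asymmetric norm is immediate from the remark preceding Lemma~\ref{TriangleInEq}, and $V=H_L\oplus W_L$ holds by construction.
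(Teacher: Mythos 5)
Your instinct to reduce everything to showing $W_L\cap N_L=\{0\}$ is correct, and you even find the right auxiliary functional — the barycentre $b=\sum_{\ell\in L}\lambda_\ell\xi_\ell$, which up to normalization is exactly the paper's $\eta=\sum_{\ell\in L}\xi_\ell$. But you never make the one choice that closes the argument: $W_L$ must be taken \emph{inside $\ker b$}. Instead you commit $W_L$ to being a euclidean orthogonal complement, first of $H_L$ and later of $S_L^0$, and both choices fail.

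For the first choice, $W_L=H_L^\perp$ need not avoid $N_L$. Take $V=\RR^2$ with $\xi_1=(1,0)$, $\xi_2=(1,1)$, $\xi_3=(-1,0)$, $\xi_4=(0,-1)$ (a compact polyhedral $0$-neighbourhood). Then $L=\{1,2\}$ is a dual face realized by $v=(1,0)$; here $H_L=\{(x,y):x=x+y\}$ is the $x$-axis and $H_L^\perp$ is the $y$-axis, yet $(0,-1)\in H_L^\perp\cap N_L$. For the second choice, your claim that $S_L^{0}=H_L$ when $L$ is a dual face is actually always false in the opposite direction: the vector $u$ with $\xi_\ell(u)=1$ for all $\ell\in L$, whose existence defines a dual face, lies in $H_L$ but has $\xi_\ell(u)=1\ne 0$, so $u\notin S_L^{0}$ and $S_L^{0}\subsetneq H_L$. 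Consequently the euclidean complement of $S_L^0$ is strictly larger than a complement of $H_L$ and the decomposition $V=H_L\oplus W_L$ you assert does not hold (in the example above $S_L^0=\{0\}$ and its complement is all of $V$). Your closing step ``$v\in W_L$ means $v$ is a euclidean-combination of the $\xi_\ell$'s, and $b(v)\le 0$ forces $v=0$'' also does not go through, since $v$ being in the span of the $\xi_\ell$ does not make it a \emph{nonnegative} combination, so $\langle v,v\rangle=\sum c_\ell\,\xi_\ell(v)$ has no definite sign.

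The fix is to build the barycentre into the definition of $W_L$. The paper sets $\eta=\sum_{\ell\in L}\xi_\ell$, observes that $\eta(u)=|L|\ne 0$ with $u\in H_L$, hence $H_L\not\subseteq\ker\eta$ and $V=H_L+\ker\eta$, and then chooses $W_L\subseteq\ker\eta$ complementary to $H_L$. For $w\in W_L\cap N_L$ one has $\sum_{\ell\in L}\xi_\ell(w)=\eta(w)=0$ with every summand $\le 0$, forcing $\xi_\ell(w)=0$ for all $\ell\in L$, hence $w\in H_L\cap W_L=\{0\}$. This is precisely the ``equality case'' in your paragraph about $b(v)\le 0$ — you just need $b(v)=0$ to be automatic for $v\in W_L$, which it is once $W_L\subseteq\ker b$.
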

\begin{proof}
	We need to find a subspace $W_L$ which is a complement of $H_L$, such that 
	$W_L\cap N_L=\{0\}$, where $N_L=\{v\in V\mid\nu_L(v)\leq0\}$ is the negative cone of $\nu_L$. 
	Then $\nu_L$ restricts to an asymmetric norm on $W_L$.
	
	We put $\eta=\sum_{\ell\in L}\xi_\ell$. There exists $u\neq 0$ such that $\xi_\ell(u)=1$
	holds for all $\ell\in L$. Therefore $\eta(u)\neq0$ and thus $\eta\neq0$.
	Since $u\in H_L$, we have $V=H_L+\eta^\perp$. 
	We choose a subspace $W_L\subseteq \eta^\perp$
	such that $V=H_L\oplus W_L$. 
	Suppose that $w\in W_L\cap N_L$. Then $\xi_\ell(w)\leq0$ for all $\ell\in L$. On the other
	hand $\eta(w)=0$, whence $\xi_\ell(w)=0$ for all $\ell\in L$. Thus $w\in H_L$ and therefore
	$w=0$. This shows that $\nu_L$ restricts to an asymmetric norm on $W_L$.
\end{proof}

Combining these results, we can describe the bordification $\widehat V$ of $V$ now as a stratified space.
Recall that  $\Sigma=\{L\subseteq K\mid L\text{ is a dual face}\}$.
We put \[V_L=V/H_L\text{ for }L\in\Sigma\text{ and }V_K=V.\]
\begin{Thm}\label{Stratification}
	There is a $V$-equivariant bijection
	\[
	\Phi:\widehat V\longrightarrow \bigsqcup\{V_L\mid L\in\Sigma\cup\{K\}\}
	\]
	given by
	\[
	\Phi[v\longmapsto \nu(p-v)-\nu(p)]=p
	\]
	and 
	\[
	\Phi[v\longmapsto \nu_L(p-v)-\nu_L(p)]=p+H_L
	\]
	The restriction of $\Phi^{-1}$ to each of the vector spaces $V_L$ is a homeomorphism.
\end{Thm}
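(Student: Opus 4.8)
The plan is to build $\Phi$ piece by piece over the index set $\Sigma\cup\{K\}$, writing $\nu_K:=\nu$ and $H_K:=\{0\}$ so that $V_K=V/H_K=V$, and then to check in turn: well-definedness and bijectivity, $V$-equivariance, and that each restriction $\Phi^{-1}|_{V_L}$ is a topological embedding. By \cref{AsymHoro}, $\widehat V$ is the union of the families $\mathcal F_L=\{\,v\mapsto\nu_L(p-v)-\nu_L(p)\mid p\in V\,\}$ for $L\in\Sigma\cup\{K\}$. For $L=K$ the parameter $p$ is recovered as the unique minimiser of $v\mapsto\nu(p-v)-\nu(p)$, so $p\mapsto(v\mapsto\nu(p-v)-\nu(p))$ is a bijection $V_K\to\mathcal F_K$. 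For $L\in\Sigma$, the map $p\mapsto(v\mapsto\nu_L(p-v)-\nu_L(p))$ is the orbit map of the transitive $V$-action on $\mathcal F_L$ through the point $\nu_L$; by \cref{lem::lem_stab_L} the stabiliser of $\nu_L$ is $H_L$, and since $V$ is abelian a short computation with the action formula $(\tau_w\psi)(x)=\psi(x-w)-\psi(-w)$ shows that the fibre over the horofunction with parameter $q$ is exactly $q+H_L$; hence this map descends to a bijection $V_L=V/H_L\to\mathcal F_L$ carrying $p+H_L$ to $v\mapsto\nu_L(p-v)-\nu_L(p)$. Finally \cref{distinct} shows that the families $\mathcal F_L$, $L\in\Sigma\cup\{K\}$, are pairwise disjoint. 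Taken together, these statements say precisely that the formulas in the theorem define a bijection $\Phi$.

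For the equivariance, I let $V$ act on $V_K=V$ and on each $V_L=V/H_L$ by translation (well defined because $H_L$ is a subgroup). The same action-formula computation as above shows that $\tau_w$ sends the horofunction with parameter $p$ to the one with parameter $w+p$ in every family, so $\Phi$ intertwines the two $V$-actions.

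The substantive point is the topological statement. Continuity of $\Phi^{-1}|_{V_L}$ is quick: by \cref{TriangleInEq}, $\sup_{v\in V}\bigl|(\nu_L(p-v)-\nu_L(p))-(\nu_L(q-v)-\nu_L(q))\bigr|\le 2\gamma\|p-q\|$, so $p\mapsto[\,v\mapsto\nu_L(p-v)-\nu_L(p)\,]$ is $(2\gamma)$-Lipschitz from $V$ into $C_\Bnd(V)/\RR\cong I_{\Bnd,o}(V)$ for each of the pseudo-metrics $d_k$ that define its topology, hence continuous; it is constant on cosets of $H_L$ and $V\to V/H_L$ is an open quotient map, so it descends to a continuous map on $V_L$. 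That this descended map is a \emph{topological embedding} is the step I expect to be the main obstacle, because $V_L$ is noncompact, so one cannot simply invoke ``a continuous bijection out of a compact space is a homeomorphism''. To get around this I would use \cref{NormOnSubspace}: fix $W_L$ with $V=H_L\oplus W_L$ and $\nu_L|_{W_L}$ an honest asymmetric norm. Under the linear isomorphism $V_L\cong W_L$ the map becomes $W_L\to C_\Bnd(V)/\RR$, $w\mapsto[\,v\mapsto\nu_L(w-v)-\nu_L(w)\,]$, and composing with the continuous restriction homomorphism $r_{W_L}\colon C_\Bnd(V)/\RR\to C_\Bnd(W_L)/\RR$ produces $w\mapsto[\,u\mapsto\nu_L(w-u)\,]$ $(u\in W_L)$, which is exactly the map $\tilde\iota_\Bnd$ for the metric space $(W_L,d)$ with the asymmetric metric $(a,b)\mapsto\nu_L(a-b)$. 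That asymmetric metric satisfies (bl) and (ic) on the proper space $W_L$, so by the Proposition of \cref{sec:asymmetric-bordification} (which states that $\tilde\iota_\Bnd$ is a topological embedding whenever $\delta$ has properties (bl) and (ic)) this composite is a topological embedding. Since a continuous map whose composition with a continuous map is a topological embedding is itself a topological embedding, it follows that $w\mapsto[\,v\mapsto\nu_L(w-v)-\nu_L(w)\,]$, and hence $\Phi^{-1}|_{V_L}$, is a topological embedding. For $L=K$ this is the special case $W_K=V$, $\nu_K=\nu$, where one may quote the embedding proposition directly.
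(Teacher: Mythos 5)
Your proposal is correct and follows essentially the same route as the paper's proof: bijectivity and equivariance from Lemmas~\ref{distinct} and~\ref{lem::lem_stab_L}, continuity from Lemma~\ref{TriangleInEq}, and the embedding step by choosing $W_L$ as in Lemma~\ref{NormOnSubspace} and composing with the restriction map to $W_L$ so that the Section~\ref{sec:asymmetric-bordification} embedding proposition applies. You spell out a few details more explicitly (the orbit-map view of bijectivity, the standard fact that $f$ is an embedding whenever $g\circ f$ is and $g$ is continuous), but the decomposition and the key lemmas invoked are the same as in the paper.
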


On the right-hand side we have to use the disjoint union since it may happen that $H_L=H_{L'}$
holds for different dual faces $L,L'$, eg.~if $B$ is a cube.

\begin{proof}
	By Lemma~\ref{distinct}, the map $\Phi$ is well-defined and surjective.
	By Lemma~\ref{lem::lem_stab_L}, it is also injective.
	The $V$-stabilizer of the map $[v\longmapsto \nu(p-v)-\nu(p)]$ is trivial
	and the $V$-stabilizer of the map $[v\longmapsto \nu_L(p-v)-\nu_L(p)]$ is $H_L$
	by Lemma~\ref{lem::lem_stab_L}. Hence $\Phi$ is an equivariant bijection.
	
	For the horofunctions $v\longmapsto \nu(p-v)-\nu(p)$, the map $\Phi$ is just the inverse of the 
	topological embedding $V\longrightarrow \widehat V$ and therefore a homeomorphism.
	
	Assume now that $L\subseteq K$ is a dual face and put 
	$\phi_p(v)=\nu_L(p-v)-\nu_L(p)$. Let $W_L\subseteq V$ be as in Lemma~\ref{NormOnSubspace}.
	The map $p\longmapsto \phi_p$ is a continuous map $W_L\longrightarrow \widehat V\subseteq I_{\Bnd,0}(V)$
	by Lemma~\ref{TriangleInEq}.
	If we combine it with the restriction map $I_{\Bnd,0}(V)\longrightarrow I_{\Bnd|W_L,0}(W_L)$,
	we obtain an embedding $W_L\longmapsto \widehat{W_L}$. Therefore the map 
	$W_L\longrightarrow \widehat V$ is also an embedding. Now there is an isomorphism of topological vector spaces
	$W_L\longrightarrow V_L=V/H_L$ and thus $\Phi^{-1}$ is a homeomorphism on $V_L$.
\end{proof}

The description of the horofunctions in Theorem~\ref{AsymHoro} allows us also to describe
the horofunctions using rays in $V$.

\begin{Def}
	Let $L\subseteq K$ be a dual face. We put 
	\[
	H_L^+=\{v\in H_L\mid \xi_\ell(v)>\xi_k(v)\text{ for all }\ell\in L\text{ and }k\in K\setminus L\}.
	\]
	From the definition of a dual face we see that $H_L^+\neq\emptyset$.
	Thus $H_L^+$ is a nonempty open cone in $H_L$.
	For formal reasons it will be convenient to put 
	\[
	H_K^+=H_K=\{0\}.
	\]
\end{Def}

\begin{Lem}\label{BusemannLemma}
	Let $L$ be a dual face and assume that $w\in H_L^+$.
	Then the family of functions
	$(\tau_{-tw}\nu)_{t>0}$ converges in $I_{\Bnd,0}(V)$ to $\nu_L$ as $t$ gets large.
\end{Lem}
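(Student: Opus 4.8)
The plan is to compute the functions $\tau_{-tw}\nu$ explicitly and observe that on any fixed closed ball around $0$ they become \emph{equal} to $\nu_L$ once $t$ is large; since the topology on $I_{\Bnd,0}(V)$ is that of uniform convergence on bounded sets, this yields the claim at once. First I would unwind the $V$-action recorded just before Lemma~\ref{lem::lem_stab_L}: for $u\in V$ and $t>0$,
\[
(\tau_{-tw}\nu)(u)=\nu(u+tw)-\nu(tw).
\]
This function lies in $I_{\Bnd,0}(V)$ (it vanishes at $u=0$ and is Lipschitz), as does $\nu_L$ by Lemma~\ref{TriangleInEq}. If $L=K$, then $H_K^+=\{0\}$, hence $w=0$ and $\tau_0\nu=\nu=\nu_K=\nu_L$, so there is nothing to prove; assume henceforth $L\subsetneq K$.

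Next I would exploit the hypothesis $w\in H_L^+$. Since $w\in H_L$, the numbers $\xi_\ell(w)$ with $\ell\in L$ all equal a common real value $c$; and since $\xi_k(w)<c$ for every $k\in K\setminus L$, we get $\nu(tw)=\max_{k\in K}t\,\xi_k(w)=tc$ for $t\ge 0$. Set $\eps=\min\{\,c-\xi_k(w)\mid k\in K\setminus L\,\}>0$. Fix $R>0$ and choose a constant $C_0$ with $|\xi_j(u)|\le C_0R$ for all $j\in K$ and all $u$ with $\|u\|\le R$ (possible since $K$ is finite; one could also use the constant $\gamma$ of Lemma~\ref{TriangleInEq}). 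For such $u$ and every $\ell\in L$ we have $\xi_\ell(u+tw)=\xi_\ell(u)+tc$, so $\max_{\ell\in L}\xi_\ell(u+tw)=\nu_L(u)+tc\ge -C_0R+tc$, whereas for $k\in K\setminus L$ we have $\xi_k(u+tw)=\xi_k(u)+t\,\xi_k(w)\le C_0R+t(c-\eps)$. Hence as soon as $t\,\eps>2C_0R$ the indices of $K\setminus L$ are strictly dominated, and therefore
\[
\nu(u+tw)=\max_{\ell\in L}\xi_\ell(u+tw)=\nu_L(u)+tc .
\]
Together with $\nu(tw)=tc$ this gives $(\tau_{-tw}\nu)(u)=\nu_L(u)$ for all $u$ with $\|u\|\le R$ as soon as $t>2C_0R/\eps$.

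Since every bounded subset of $V$ lies in some ball $\bar B_R(0)$, and the topology of $I_{\Bnd,0}(V)\cong C_\Bnd(V)/\RR$ is that of uniform convergence on bounded sets, the last identity shows that $\tau_{-tw}\nu$ coincides with $\nu_L$ on arbitrarily large balls once $t$ is large enough, i.e. $\tau_{-tw}\nu\to\nu_L$ as $t\to\infty$. I do not expect a genuine obstacle here: the argument is a direct estimate, and the convergence is in fact eventual equality on each bounded set. The only point that really uses $w\in H_L^+$ rather than just $w\in H_L$ is the strict gap $\eps>0$, which makes the functionals $\xi_k$ for $k\notin L$ asymptotically irrelevant; isolating the degenerate case $L=K$ (where $w=0$) is the sole piece of bookkeeping.
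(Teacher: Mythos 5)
Your proof is correct and takes essentially the same approach as the paper's: both unwind the translation action, exploit the strict gap coming from $w\in H_L^+$, and observe that $\tau_{-tw}\nu$ actually coincides with $\nu_L$ on any fixed ball once $t$ is large (the paper phrases the gap as an $\eps$-neighborhood $B_\eps(w)$ on which $\xi_\ell>\xi_k$ and uses scaling to get agreement on $B_{t\eps}(0)$, while you make the same estimate explicit via the minimum gap and an operator-norm bound).
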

\begin{proof}
	There exists $\eps>0$ such that $\xi_\ell(v)>\xi_k(v)$ holds for all
	$\ell\in L$, $k\in K\setminus L$ and $v\in B_\eps(w)$.
	Hence $\tau_{-tw}\nu$ and $\tau_{-tw}\nu_L=\nu_L$ agree on the ball $B_{t\eps}(0)$, for $t>0$.
	As $t$ grows, this ball becomes arbitrarily large.
\end{proof}
We note also the following. If $w\in V$ is a nonzero vector, then the set 
\[
K(w)=\{\ell\in K\mid \xi_\ell(w)=\nu(w)\}
\]
is a dual face
\footnote{The relation between $K_w$ and $K(w)$ is as follows. If 
	$t\gg0$ is a nonstandard real, then $K_{tw}=K(w)$.}, 
and $w\in H_{K(w)}^+$. Moreover 
$\{u\in H_L^+\mid\nu(u)=1\}$ is an open face of the polyhedron
$B=\{u\in V\mid\nu(u)\leq1\}$. In particular, 
\[
V\setminus\{0\}=\bigsqcup\{H_L^+\mid L\subseteq K\text{ is a dual face}\}.
\]
The sets $H_{\{\ell\}}^+$, for $\ell\in K$,
are pairwise disjoint open cones in $V$, and their union is dense in $V$.

\begin{Prop}\label{BusemannProp}
	Let $p,w\in V$. Then the family of normalized horofunctions
	\[
	u\longmapsto\nu(p+tw-u)-\nu(p+tw)
	\]
	converges to the normalized horofunction $u\longmapsto \nu_{K(w)}(p-u)-\nu_{K(w)}(p)$ as $t$ gets large,
	where $K(w)=\{\ell\in K\mid \xi_\ell(w)=\nu(w)\}$.
	In particular, every normalized horofunction arises as such a limit along an affine line in $V$.
\end{Prop}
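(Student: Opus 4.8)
The plan is to prove something slightly stronger than convergence: that on each bounded ball the members of the family are eventually \emph{equal} to the proposed limit. Convergence in $C_\Bnd(V)/\RR\cong I_{\Bnd,0}(V)$ then follows at once, since a neighbourhood basis of a normalized horofunction there consists of the sets of functions agreeing with it to within a prescribed tolerance on the balls $\bar B_{2^k}(0)$ (see \cref{sec:topology} and the pseudometrics $d_k$ defined in \cref{sec:asymmetric-bordification}). So I would first reduce the statement to the following claim: for every $R>0$ there is a threshold $T=T(R)$ such that
\[
\nu(p+tw-u)-\nu(p+tw)\;=\;\nu_{K(w)}(p-u)-\nu_{K(w)}(p)\qquad\text{for all }t\ge T\text{ and all }u\text{ with }\norm{u}\le R.
\]

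The heart of the matter is that $\nu=\max_{k\in K}\xi_k$ becomes linear far out along a ray. If $w=0$ there is nothing to prove, since then $K(w)=K$, $\nu_K=\nu$ and $p+tw=p$; so assume $w\ne0$. The forms $\xi_\ell$ with $\ell\in K(w)$ all take the value $\nu(w)$ on $w$, whereas the remaining $\xi_k$ fall short by a definite amount $\eps:=\nu(w)-\max\{\xi_k(w)\mid k\in K\setminus K(w)\}>0$; here $K(w)\subsetneq K$, because $H_K=\{0\}$ forces a vector on which all $\xi_k$ agree to be $0$. Hence, once $t$ exceeds $2M/\eps$, where $M$ bounds $|\xi_k(x)|$ over all $k\in K$ and all $x$ with $\norm{x}\le R+\norm{p}$, the maximum defining
\[
\nu(p+tw-u)=\max_{k\in K}\bigl(t\,\xi_k(w)+\xi_k(p-u)\bigr)
\]
is attained on $K(w)$, so $\nu(p+tw-u)=t\,\nu(w)+\nu_{K(w)}(p-u)$ for all $u$ with $\norm{u}\le R$. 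Specializing to $u=0$ and subtracting gives the displayed identity with $T(R)=2M/\eps$. Since $K(w)$ is a dual face for $w\ne0$ (as recorded just before the Proposition), \cref{AsymHoro} confirms that the limit is itself a normalized horofunction.

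For the concluding assertion I would invoke \cref{AsymHoro} once more: every normalized horofunction is either $u\mapsto\nu(p-u)-\nu(p)$ for some $p\in V$ — realized by the constant family $w=0$, or by the $t=0$ member of any such family — or $u\mapsto\nu_L(p-u)-\nu_L(p)$ for a dual face $L$ and some $p\in V$; in the latter case one picks, using the definition of a dual face, a vector $w$ with $\nu(w)=1$ and $K(w)=\{\ell\in K\mid\xi_\ell(w)=1\}=L$, and then the affine line $t\mapsto p+tw$ does the job. I do not foresee a serious obstacle: the argument is a short computation, and the only delicate points are the degenerate case $w=0$, the appeal to $H_K=\{0\}$ that makes the gap $\eps$ positive, and citing \cref{AsymHoro} to recognize the limit as a bona fide horofunction. (Alternatively the statement follows from \cref{BusemannLemma} together with the $V$-equivariance of the bordification — the family being $\tau_{tw}$ applied to the horofunction at $p$ — but the direct argument above is no longer.)
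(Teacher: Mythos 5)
Your proof is correct and is, at bottom, the same argument the paper uses: the paper's proof simply cites Lemma~\ref{BusemannLemma} (whose proof is precisely the observation that $\tau_{-tw}\nu$ and $\nu_{K(w)}$ agree on $B_{t\eps}(0)$, the gap $\eps>0$ existing because $w\in H_{K(w)}^+$) together with the remark that $K(w)$ is a dual face; you have unpacked that lemma directly and absorbed the translation by $p$ into the computation rather than invoking $V$-equivariance, as you yourself note at the end. The auxiliary observations — that $H_K=\{0\}$ forces $K(w)\subsetneq K$ for $w\neq 0$, that the $w=0$ case is degenerate but trivial, and that every horofunction of the form $\nu_L(p-\cdot)-\nu_L(p)$ is realized by choosing $w$ with $K(w)=L$ — are all sound.
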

\begin{proof}
	This follows from Lemma~\ref{BusemannLemma} and the remark preceding this proposition.
\end{proof}
Now we improve on Theorem~\ref{Stratification} by describing the topology on the right-hand side.
\begin{Def}
      Let $L$ be a dual face. Given $\eps>0$ and $q\in V$, we put $D=q+B_\eps(0)+H_L^+$ and 
      \[
      U(L,\eps,q)=D\sqcup\bigsqcup\{(D+H_{L'})/H_{L'}\mid L'\in\Sigma\text{ with }L'\supseteq L\}
      \subseteq \bigsqcup\{V_{L'}\mid L'\in\Sigma\cup\{K\}\}.
      \]
      We put also
      \[
      U(K,\eps,q)=q+B_\eps(0)\subseteq V \subseteq \bigsqcup\{V_{L'}\mid L'\in\Sigma\cup\{K\}\}.
      \]
\end{Def}

      We note that the collection of these sets is invariant under the action of the group $V$ by translations.
      Now we show that these sets form a basis for the topology imposed on $\bigsqcup\{V_{L'}\mid L'\in\Sigma\cup\{K\}\}$
      by the bijection $\Phi$
      in Theorem~\ref{Stratification}.

\begin{Lem}\label{Nbhd}
	Let $L\in\Sigma\cup\{K\}$. Given real numbers $r,s>0$ there exist $\eps>0$ and $q\in H_L^+$ such that 
	\[
	|\nu_L(-u)-\phi(u)|<s
	\]
	holds for every $u\in \bar B_r(0)$ and every normalized horofunction $\phi$ with $\Phi(\phi)\in U(L,\eps,q)$.
\end{Lem}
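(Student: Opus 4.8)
The plan is to reduce the statement to a uniform estimate comparing $\nu_L(-u)$ with the horofunctions attached to points near a suitable ray in $H_L^+$. First I would dispose of the degenerate case $L=K$: here $H_K^+=\{0\}$, $\nu_K=\nu$, and $U(K,\eps,q)=q+B_\eps(0)$ consists only of horofunctions $u\mapsto\nu(p-u)-\nu(p)$ with $p\in q+B_\eps(0)$; by Lemma~\ref{TriangleInEq} the map $p\mapsto[\,u\mapsto\nu(p-u)-\nu(p)\,]$ is continuous, uniformly on $\bar B_r(0)$, so choosing $q=0$ and $\eps$ small gives $|\nu(-u)-\phi(u)|<s$ for all $u\in\bar B_r(0)$. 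So assume $L\in\Sigma$ is a genuine dual face.

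For the main case, fix a vector $w\in H_L^+$ with $\nu(w)=1$. By definition of $H_L^+$ there is $\eps_0>0$ with $\xi_\ell(v)>\xi_k(v)$ for all $\ell\in L$, $k\in K\setminus L$ and all $v\in B_{\eps_0}(w)$; rescaling, the same separation holds on $B_{t\eps_0}(tw)$. The idea is to take $q=tw$ for $t$ large and $\eps=t\eps_0/2$, say. A point $\phi$ with $\Phi(\phi)\in U(L,\eps,q)$ is, by the definition of $U(L,\eps,q)$ and Theorem~\ref{Stratification}, either $u\mapsto\nu(p-u)-\nu(p)$ with $p\in D=tw+B_\eps(0)+H_L^+$, or $u\mapsto\nu_{L'}(p-u)-\nu_{L'}(p)$ with $L'\supseteq L$ and $p+H_{L'}\in(D+H_{L'})/H_{L'}$, i.e. $p$ can be taken in $D$. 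In either case, write $p=tw+b+v$ with $\|b\|<\eps$ and $v\in H_L^+$ (or $v=0$); the key point is that for every $u\in\bar B_r(0)$, once $t$ is large enough the vector $p-u$ lies in the cone where the functionals $\xi_\ell$, $\ell\in L$, strictly dominate the $\xi_k$, $k\in K\setminus L$, so that $\nu(p-u)=\nu_L(p-u)$ and, when $L\subseteq L'$, also $\nu_{L'}(p-u)=\nu_L(p-u)$, because $L\subseteq K_{p-u}\subseteq L'$ forces $\nu_{L'}$ and $\nu_L$ to agree at $p-u$. Here I would use Lemma~\ref{TriangleInEq} to control the error: $|\nu_L(p-u)-\nu_L(p)-\nu_L(-u)|=|\nu_L((-u)-(-p))-\nu_L(-p)-\nu_L(-u)+\nu_L(0)|$ vanishes when $p\in H_L$, but in general $p$ is only close to $H_L^+\oplus\mathbb{R}w$; so instead I split $p-u$ and $p$ and use that $\xi_\ell(p-u)-\xi_\ell(p)=-\xi_\ell(u)$ depends only on $u$ for $\ell\in L$ (this is where $H_L^+$'s defining property, that the $\xi_\ell$ are constant on translates along $H_L$, is not quite available, so one instead uses that for the dominant index set the max is realized among $\ell\in L$ and each such difference equals $-\xi_\ell(u)$, giving $\nu_L(p-u)-\nu_L(p)=\max_{\ell\in L}(\xi_\ell(p)-\xi_\ell(u))-\max_{\ell\in L}\xi_\ell(p)$, which up to the $2\gamma\|b\|$-type error of Lemma~\ref{TriangleInEq} coming from the $B_\eps(0)$ perturbation equals $\nu_L(-u)$ when the two maxima are attained at the same $\ell$).

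The main obstacle is precisely that last bookkeeping step: ensuring that for $t$ large the two maxima $\max_{\ell\in L}\xi_\ell(p-u)$ and $\max_{\ell\in L}\xi_\ell(p)$ are governed by the $tw+v$ part uniformly in $u\in\bar B_r(0)$, so that the $-\xi_\ell(u)$ shift is exactly the $\nu_L$-shift, while simultaneously the full $\nu$ (or $\nu_{L'}$) collapses to $\nu_L$ on the relevant cone. Quantitatively one picks $\eps>0$ first (from the separation on $B_{\eps_0}(w)$, scaled, and small enough that $2\gamma\eps<s/2$ via Lemma~\ref{TriangleInEq}), then $t$ large enough that $t\eps_0 > 2(r+\eps) \cdot \max_k\|\xi_k\|$ so that adding any $u\in\bar B_r(0)$ and any $b\in B_\eps(0)$ cannot move $p-u$ out of the dominance cone of $L$ nor change which $\ell\in L$ realizes the maximum beyond the control already absorbed in the $\gamma$-estimate. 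Combining these two error contributions gives $|\nu_L(-u)-\phi(u)|<s$ for all $u\in\bar B_r(0)$, as required.
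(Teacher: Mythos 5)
Your overall structure matches the paper's: handle $L=K$ directly, for $L\in\Sigma$ take $q=tw$ far along a ray in $H_L^+$ so that the cone-dominance of the $L$-indices collapses $\nu$ and every $\nu_{L'}$ (for $L'\supseteq L$) to $\nu_L$ on the relevant translates, pick $\eps$ small so that Lemma~\ref{TriangleInEq} gives the final $2\gamma\eps$ bound, and use $H_{L'}$-invariance to choose the representative $p$ in $D$. That is all correct and is precisely the paper's scheme (the paper sets $\eps=s/(2\gamma)$ and then chooses $q$ so that $(\xi_\ell-\xi_k)>0$ on $\bar B_{r+\eps}(q)$).

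Where your proposal goes wrong is the reduction of $\nu_L(p-u)-\nu_L(p)$ to something close to $\nu_L(-u)$. You write $p=tw+b+v$ with $b\in B_\eps(0)$ and $v\in H_L^+$ and then worry about whether the maxima $\max_{\ell\in L}\xi_\ell(p-u)$ and $\max_{\ell\in L}\xi_\ell(p)$ are ``attained at the same $\ell$'' and whether $t$ can be chosen large enough so that adding $u$ does not change this $\ell$. Neither concern is needed, and the claim that the difference ``equals $\nu_L(-u)$ when the two maxima are attained at the same $\ell$'' is false: if a single $\ell_0$ realizes both maxima, the difference is $-\xi_{\ell_0}(u)$, which generally differs from $\nu_L(-u)=\max_{\ell\in L}(-\xi_\ell(u))$ (that max picks the $\ell$ \emph{minimizing} $\xi_\ell(u)$, whereas $\ell_0$ was chosen for a different reason). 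The clean argument uses instead that $tw+v\in H_L$, so $\xi_\ell(tw+v)$ is \emph{constant over $\ell\in L$}; hence $\nu_L(tw+v+b-u)-\nu_L(tw+v+b)=\nu_L(b-u)-\nu_L(b)$ with no perturbation error and no assumption on which $\ell$ realizes any maximum. One then applies Lemma~\ref{TriangleInEq} once, with the pair $(b,0)$, to conclude $|\nu_L(b-u)-\nu_L(b)-\nu_L(-u)|\leq 2\gamma\|b\|<2\gamma\eps\leq s$. Also, the expression $|\nu_L((-u)-(-p))-\nu_L(-p)-\nu_L(-u)+\nu_L(0)|$ you wrote down is not equal to $|\nu_L(p-u)-\nu_L(p)-\nu_L(-u)|$ in the asymmetric setting, since $\nu_L(-p)\neq\nu_L(p)$ in general; and $K_{p-u}$ is nonstandard-ultrapower notation that doesn't apply to a standard $p$ — you mean the set $K(p-u)$ from Proposition~\ref{BusemannProp}. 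Fix the $H_L$-constancy step and the rest of your outline goes through exactly as in the paper.
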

\begin{proof}
	We put $\eps=\frac{s}{2\gamma}$, where $\gamma>0$ is as in Lemma~\ref{TriangleInEq}. 
	
	Suppose that $K=L$. Then $ |\nu(-u)-\nu(p-u)+\nu(p)|<s$
	holds for every $p\in U(K,\eps,0)=B_\eps(0)$.
	
	Now suppose that $K\neq L$.
	Then we choose $q\in H_L^+$ in such a way that 
	for all $u\in \bar B_{r+\eps}(q)$ we have $(\xi_\ell-\xi_k)(u)>0$ whenever $\ell\in L$ and $k\in K\setminus L$.
	\footnote{\label{fn}This is possible because every $w\in H_L^+$ has a small neighborhood such that for every $u$ in this neighborhood,
		$(\xi_\ell-\xi_k)(u)>0$, for $k,\ell$ as above. Then we multiply $w$ by a large real number to obtain~$q$.}
	Let $p_1\in B_\eps(0)$ and $p_2\in H_L^+$. If $L'\subseteq K$ is any subset containing $L$
	and if $u\in\bar B_r(0)$, we have
	\begin{multline*}
	|\nu_L(-u)-\nu_{L'}(q+p_1+p_2-u)+\nu_{L'}(q+p_1+p_2)| \\
	=|\nu_L(-u)-\nu_L(q+p_1+p_2-u)+\nu_L(q+p_1+p_2)|\\
	= |\nu_L(-u)-\nu_L(p_1-u)+\nu_L(p_1)|<2\gamma\eps=s.
	\end{multline*}
	The claim follows.
\end{proof}

\begin{Prop}\label{Open}
	Let $L\in\Sigma\cup\{K\}$, let $\eps>0$ and $q\in V$. Then 
	the set $\Phi^{-1}(U(L,\eps,q))\subseteq\widehat V$ is open.
\end{Prop}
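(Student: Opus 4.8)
The plan is to show that $\Phi^{-1}(U(L,\eps,q))$ contains an open neighborhood of each of its points, where openness is measured in the subspace topology on $\widehat V\subseteq I_{\Bnd,0}(V)$. By the $V$-equivariance of the family $\{U(L,\eps,q)\}$ noted above, and the fact that $V$ acts by homeomorphisms on $\widehat V$, it suffices to treat the translate that is convenient; but in fact I would argue directly at an arbitrary point, since the normalized horofunctions indexed by $U(L,\eps,q)$ are of two kinds (those $\nu_{L'}(p-\cdot)-\nu_{L'}(p)$ with $L'\supseteq L$, $L'\in\Sigma$, together with the ``top stratum'' maps $\nu(p-\cdot)-\nu(p)$ when $K\in U(L,\eps,q)$, i.e.\ when $L=K$).

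First I would fix a normalized horofunction $\phi$ with $\Phi(\phi)\in U(L,\eps,q)$, say $\Phi(\phi)=p+H_{L'}$ for some dual face $L'\supseteq L$ (the case $L=L'=K$ being the already-established fact that $V\hookrightarrow\widehat V$ is an embedding). By definition of $U(L,\eps,q)$ we may choose a representative $p=q+p_0+p_1$ with $p_0\in B_\eps(0)$ and $p_1\in H_L^+$; moreover, since $D=q+B_\eps(0)+H_L^+$ is open in $V$, we can shrink and pick $\eps'<\eps$ and a point $q'\in H_L^+$ close to $p_1$ so that $q'+B_{\eps'}(0)+H_L^+\subseteq D$ and $q'+p_0\in D$. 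Then I would invoke Lemma~\ref{Nbhd} with $L'$ (not $L$) in the role of the dual face: it produces, for prescribed $r,s>0$, some $\eps''>0$ and $q''\in H_{L'}^+$ so that every normalized horofunction $\psi$ with $\Phi(\psi)\in U(L',\eps'',q''+p')$ (where $p'$ is a suitable translate absorbing $p$) satisfies $|\phi(u)-\psi(u)|<s$ for all $u\in\bar B_r(0)$. The key combinatorial point is that because $L'\supseteq L$, one has $H_{L'}^+\subseteq H_L^+\!+\!H_{L'}$ in the appropriate sense and $U(L',\,\cdot\,,\cdot)\subseteq U(L,\eps,q)$ once the parameters are chosen small enough and the translating point is pushed far enough out along $H_L^+$; this uses the open-cone structure of the $H_\bullet^+$ and the inclusion $\Sigma_{\geq L'}\subseteq\Sigma_{\geq L}$ of index sets appearing in the disjoint unions.

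The basic neighborhoods of $\phi$ in $I_{\Bnd,0}(V)$ are the sets $\{\psi : |\phi(u)-\psi(u)|<s \text{ for } u\in\bar B_{2^k}(0)\}$, so showing that such a set (intersected with $\widehat V$ and pulled back by $\Phi$) is contained in $U(L,\eps,q)$ is exactly what is needed. Thus I would: (1) given $\phi$ and $U(L,\eps,q)$, extract the data $L'\supseteq L$, $p$, and the slack $\eps-\eps'$ and ``depth'' in $H_L^+$ available at $\phi$; (2) run Lemma~\ref{Nbhd} to convert a small $\bar B_r$-uniform bound $s$ into membership in a sub-box $U(L',\eps'',\cdot)$; (3) check the inclusion $U(L',\eps'',\cdot)\subseteq U(L,\eps,q)$ by the combinatorics of dual faces and cones; (4) conclude that a whole $\mathcal T_\Bnd$-basic neighborhood of $\phi$ in $\widehat V$ lands in $\Phi^{-1}(U(L,\eps,q))$.

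The main obstacle I anticipate is step (3): carefully checking that the nested-box inclusion $U(L',\eps'',q''')\subseteq U(L,\eps,q)$ holds for the right choice of parameters. This requires knowing that a point of $H_L^+$ pushed far enough in its own cone, perturbed by $B_{\eps''}$ and translated, still lands in $q+B_\eps(0)+H_L^+$, and that on the boundary strata $V_{L''}$ with $L''\supseteq L'$ the image $(D'+H_{L''})/H_{L''}$ is swallowed by $(D+H_{L''})/H_{L''}$. Both reduce to the elementary geometry of the open cones $H_\bullet^+$ and the fact (from the remark after Lemma~\ref{BusemannLemma}) that $H_{L'}^+$ is contained in the closure of $H_L^+$ when $L\subseteq L'$, together with openness of $H_L^+$ in $H_L$; it is routine but bookkeeping-heavy. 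Everything else is an application of Lemma~\ref{Nbhd}, Lemma~\ref{TriangleInEq}, and Theorem~\ref{Stratification}.
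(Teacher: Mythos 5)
Your plan has a fatal gap at step (2). Lemma~\ref{Nbhd} asserts an inclusion in the \emph{opposite} direction from what you need: given $r,s>0$, it produces $\eps,q$ such that $\Phi^{-1}(U(L,\eps,q))$ is \emph{contained in} the $\mathcal T_\Bnd$-basic neighborhood $\{\psi : |\nu_L(-u)-\psi(u)|<s,\ u\in\bar B_r(0)\}$. That is, ``membership in a $U$-box $\Rightarrow$ uniform closeness.'' To prove openness of $\Phi^{-1}(U(L,\eps,q))$ you need the converse implication at each $\phi\in\Phi^{-1}(U(L,\eps,q))$: some $\mathcal T_\Bnd$-basic neighborhood $N_{\bar B_r(0),s}(\phi)\cap\widehat V$ is contained in $\Phi^{-1}(U(L,\eps,q))$, i.e.\ ``uniform closeness $\Rightarrow$ membership.'' Your step (2) explicitly says you would ``run Lemma~\ref{Nbhd} to convert a small $\bar B_r$-uniform bound $s$ into membership in a sub-box $U(L',\eps'',\cdot)$'' --- but Lemma~\ref{Nbhd} simply does not provide this; it gives no way to conclude that a horofunction close to $\phi$ on a bounded region must be of the form $\nu_{L''}(p-\cdot)-\nu_{L''}(p)$ with $p$ in the prescribed cone region. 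Chaining the lemma together across strata, as in your steps (1)--(4), only stacks inclusions ``$U\subseteq$~ball'' and cannot flip the direction. There is no simple formal trick (e.g.\ compactness of $\widehat V$) to obtain the converse either, because such an argument would presuppose exactly the openness you are trying to establish.

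The paper's proof takes a genuinely different and more robust route: assume for contradiction that some $\phi\in\Phi^{-1}(U(L,\eps,q))$ has, for every pair $(m,n)$, a horofunction $\psi_{m,n}$ within $2^{-m}$ of $\phi$ on $\bar B_{2^n}(0)$ but outside $U(L,\eps,q)$. By $\omega_1$-saturation one then gets a single $p\in\ns V$ and an index $L''$ such that $u\mapsto\ns\nu_{L''}(p-u)-\ns\nu_{L''}(p)$ agrees with $\ns\phi$ to within an infinitesimal on an infinitely large ball, yet $p\notin\ns D+\ns H_{L''}$. A case analysis ($p$ finite; $p$ infinite with $L''=K$; $p$ infinite with $L''\subsetneq K$), using Lemma~\ref{CarryOn}, Lemma~\ref{distinct}, Lemma~\ref{lem::lem_stab_L} and the inclusion $H_{L'}^+\subseteq\overline{H_L^+}$, forces $p\in\ns D+\ns H_{L''}$ after all, a contradiction. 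The crucial content absent from your proposal is precisely this classification of \emph{which} nonstandard $p$ can represent a horofunction close to $\phi$ on a huge ball, and the observation that any such $p$ must already lie in the (nonstandard extension of the) region $D+H_{L''}$. That is the ``closeness $\Rightarrow$ membership'' direction, and it cannot be extracted from Lemma~\ref{Nbhd}.

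To repair your argument along its present lines, you would need to prove a new lemma of the form: for each horofunction $\phi$ with $\Phi(\phi)\in U(L,\eps,q)$ there exist $r,s>0$ such that every horofunction within $s$ of $\phi$ on $\bar B_r(0)$ has its $\Phi$-image in $U(L,\eps,q)$. That lemma is essentially Proposition~\ref{Open} itself, so the strategy is circular as written. The remaining ingredients you identify (translation invariance of the family of $U$-sets, the cone inclusions $H_{L'}^+\subseteq\overline{H_L^+}$ for $L\subseteq L'$, Lemma~\ref{TriangleInEq}) are all correct and do appear in the actual proof, but they play a supporting role inside the case analysis; they do not by themselves yield the reversal of direction that the theorem requires.
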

\begin{proof}
	We have to show that for every normalized horofunction $\phi\in\Phi^{-1}(U(L,\eps,q))$ there exist 
	$r,s>0$ such that every normalized horofunction $\psi$ with 
	\[
	|\phi(u)-\psi(u)|<s\text{ for all }u\in\bar B_r(0)
	\]
	is contained in the set $\Phi^{-1}(U(L,\eps,q))$. From the definition of $U(L,\eps,q)$, we may
	write \[\phi(u)=\nu_{L'}(q+q_1+q_2-u)-\nu_{L'}(q+q_1+q_2),\] with 
	$q_1\in B_\eps(0)$ and $q_2\in H_L^+$ and $L'\supseteq L$ a dual face, or $L'=K$.
	We put \[D=q+B_\eps(0)+H_L^+.\]
	If $L'\supseteq L$ is a dual face or if $L'=K$, then 
	\begin{equation}
	\label{InclusionCones}
	H_{L'}^+\subseteq\overline{H_L^+}\subseteq B_\eps(0)+H_L^+.
	\end{equation}
	We argue by contradiction, using again the ultrapower.
	
	Suppose that the claim is false. Then we find for every pair of natural numbers $(m,n)$
	a counterexample, that is,
	a normalized horofunction \[\psi_{m,n}(u)=\nu_{L_{m,n}}(p_{m,n}-u)-\nu_{L_{m,n}}(p_{m,n}),\]
	which satisfies 
	\[
	|\phi(u)-\psi_{m,n}(u)|<2^{-m}\text{ for all }u\in\bar B_{2^n}(0),
	\]
	and which is not in $U(L,\eps,q)$. We note also that then
	\[
	|\phi(u)-\psi_{m,n}(u)|<2^{-m'}\text{ for all }u\in\bar B_{2^{n'}}(0)
	\]
	holds for all $m'\leq m$ and all $n'\leq n$.
	The $\omega_1$-saturation of the ultrapower gives us therefore an $L''\in\Sigma\cup\{K\}$,
	a point $p\in \ns V$,
	and nonstandard reals $r,s>0$ with $r\gg0$ and $s\in\ns\RR_\infi$, such that 
	\begin{equation}
	\label{infinitesimal}
	\ns|\ns\phi(u)-\ns\nu_{L''}(p-u)+\ns\nu_{L''}(p)|<s 
	\end{equation}
	holds for all $u\in \ns B_r(0)$.
	The map $\ns V\longrightarrow\ns \RR$ given by
	$v\longmapsto\ns \nu_{L''}(p-v)-\ns\nu_{L''}(p)$ is not contained in
	$\ns U(L,\eps,q)$.
	In case $L''\supseteq L'$, we have therefore necessarily $p\not\in \ns D+\ns H_{L''}$.
	The Inequality~(\ref{infinitesimal}) shows that for all $u\in V$ we have 
	\[
	\phi(u)=\std(\ns\nu_{L''}(p-u)-\ns\nu_{L''}(p)),
	\]
	because $r\gg0$ and $\std(s)=0$.
	We distinguish three cases.
	
	\smallskip Case (i).
	If $p\in\ns V_\fin$ we put $w=\std(p)$. Then 
	\[
	\nu_{L'}(q+q_1+q_2-u)-\nu_{L'}(q+q_1+q_2)=\phi(u)=\nu_{L''}(w-u)-\nu_{L''}(w)
	\]
	holds for all $u\in V$.
	But then $L'=L''$ by Lemma~\ref{distinct}, and $q+q_1+q_2+v=w$ for some $v\in H_{L'}$
	by Lemma~\ref{lem::lem_stab_L}. Thus $w$ is contained in the open set $D+H_{L'}$. 
	This set is open and contains therefore a small $\eps'$-neighborhood of $w$, for some real $\eps'>0$.
	Hence $\ns D+\ns H_{L'}$ contains $p$, because $\ns d(w,p)<\eps'$ holds for every positive real
	$\eps'>0$. We have arrived at a contradiction.
	
	\smallskip Case (ii).
	Suppose next that $p\in\ns V\setminus\ns V_\fin$ and that $L''=K$. For $u\in V$ we have then 
	\[\std(\ns\nu_K(p-u)-\ns\nu_K(p))=h_p=\phi.\] 
	We decompose $p$ as $p=p_1+p_2$, with $p_1\in \ns H_{K_p}^\larg$ and $p_2\in \ns V_\fin$ as in Lemma~\ref{CarryOn}.
	If we put $w=\std(p_2)$, then 
	\[
	h_p(u)=\nu_{K_p}(w-u)-\nu_{K_p}(w).
	\]
	Therefore $L'=K_p$ and $w+v=q+q_1+q_2$ for some $v\in H_{L'}$ by Lemma~\ref{distinct} and Lemma~\ref{lem::lem_stab_L}.
	We have \[p_1-v\in \ns H_{L'}^\larg\subseteq \ns H_{L'}^+\subseteq \ns(\overline{H_L})\]
	by Equation~\ref{InclusionCones}, and the right-hand side is a subsemigroup of $\ns V$.
	As in the previous case, we have also \[p_2+v\in \ns D= q+\ns B_\eps(0)+\ns H_L^+=q+\ns B_\eps(0)+\ns(\overline{H_L^+}).\]
	Thus \[p=p_1+p_2\in q+ \ns B_\eps(0)+\ns(\overline{H_L^+})=\ns D.\]
	Again, we have arrived at a contradiction.
	
	\smallskip Case (iii).
	Suppose finally that $p\in\ns V\setminus\ns V_\fin$ and that $L''\subsetneq K$. 
	We choose $p'\in\ns H_{L''}^+$ in such a way that $\xi_\ell(p'+p+u)\gg\xi_k(p'+p+u)$ holds for all 
	$\ell\in L''$, all $k\in K\setminus L''$ and all $u\in\ns V_\fin$. $^{\ref{fn}}$ 
	Thus $K_{p+p'}\subseteq L''$,
	and
	\[
	\ns\nu_{L''}(p-u)-\ns\nu_{L''}(p)=\ns\nu_{L''}(p+p'-u)-\ns\nu_{L''}(p+p')=\ns\nu(p+p'-u)-\ns\nu(p+p')
	\]
	holds for all $u\in\ns V_\fin$. Therefore $K_{p+p'}=L'$ by Remark~\ref{ARemark}, and thus $L''\supseteq L'\supseteq L$.
	We decompose $p+p'=p_1+p_2$, with $p_1\in \ns H_{L'}^+$ and $p_2\in \ns V_\fin$ as in Lemma~\ref{CarryOn}, and we put $w=\std(p_2)$. 
	Then $w+v=q+q_1+q_2$ for some $v\in H_L'$ and hence
	\[p_2+v\in \ns D=q+\ns B_\eps(0)+\ns(\overline{H_L^+})\] as in the previous case.
	Moreover, $p_1-v\in \ns H_{L'}^+\subseteq {\ns (\overline{H_L^+})}.$
	Thus $p+p'\in \ns D$ and therefore $p\in \ns D+H_{L''}$. Again, this is a contradiction. This last case finishes the proof.
\end{proof}

\begin{Thm}\label{TopologyBasis}
	The sets $U(L,\eps,q)$, for $\eps>0$, $q\in V$ and $L\in\Sigma\cup\{K\}$,
	form a basis for the topology imposed on $\bigsqcup\{V_L\mid L\in\Sigma\cup\{K\}\}$
	by the bijection $\Phi$.
\end{Thm}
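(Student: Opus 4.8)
The plan is to check the two requirements for $\{U(L,\eps,q)\}$ to be a basis of the topology that $\Phi$ carries over from $\widehat V$: that every $U(L,\eps,q)$ is open, and that this family is a neighborhood basis at each of its points. The first requirement is exactly \cref{Open}, so only the second calls for an argument, and the point is that it is essentially a reformulation of \cref{Nbhd}.

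First I would record, from \cref{sec:topology} and the fact that $(V,d)$ is proper, that $\mathcal T_\Bnd=\mathcal T_\Cmp$ and that the subspace topology on $\widehat V\subseteq C_\Bnd(V)/\RR\cong I_{\Bnd,0}(V)$ admits, at each normalized horofunction $\phi$, the sets $N_{\bar B_r(0),s}(\phi)\cap\widehat V$, $r,s>0$, as a neighborhood basis. Transporting through $\Phi$, it then suffices to show: for every $L\in\Sigma\cup\{K\}$, every $x\in V_L$, and all $r,s>0$ there are $\eps>0$ and $q'\in V$ with $x\in U(L,\eps,q')$ and $\Phi^{-1}\big(U(L,\eps,q')\big)\subseteq N_{\bar B_r(0),s}\big(\Phi^{-1}(x)\big)$. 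To reduce to a standard point I would use that the abelian group $V$ acts on $\widehat V$ by homeomorphisms (it lies in $\Isom_\delta(V)$), that this action is transitive on each stratum $V_L$ by \cref{Stratification}, and that the family $\{U(L,\eps,q)\}$ is translation-invariant, with $\tau_w\,U(L,\eps,q)=U(L,\eps,w+q)$ straight from the definition of $D$. Picking $w\in V$ so that $\tau_w$ carries the point $x_L\in V_L$ corresponding to the horofunction $\nu_L$ (respectively $\nu$, if $L=K$) to $x$, one replaces the target neighborhood of $\Phi^{-1}(x)$ by its $\tau_{-w}$-image, solves the problem for $x_L$, and translates the resulting $U$ back by $\tau_w$, using $\tau_w\circ\Phi=\Phi\circ\tau_w$. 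For $x=x_L$ itself, \cref{Nbhd} supplies $\eps>0$ and $q\in H_L^+$ with $|\nu_L(-u)-\phi(u)|<s$ for all $u\in\bar B_r(0)$ and every normalized horofunction $\phi$ with $\Phi(\phi)\in U(L,\eps,q)$; since $\nu_L(-u)$ is the value at $u$ of the horofunction $\Phi^{-1}(x_L)$, this says exactly $\Phi^{-1}(U(L,\eps,q))\subseteq N_{\bar B_r(0),s}(\Phi^{-1}(x_L))$. That $x_L\in U(L,\eps,q)$ is then a direct check from the definition of $U(L,\eps,q)$: for $q\in H_L^+$ the element $2q$ lies in $D=q+B_\eps(0)+H_L^+$, so $0\in D+H_L$ and hence $x_L=0+H_L\in (D+H_L)/H_L\subseteq U(L,\eps,q)$; for $L=K$ one has $q=0$ and $x_K=0\in B_\eps(0)=U(K,\eps,0)$.

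The analytic content has by this stage been absorbed into \cref{Open} and \cref{Nbhd}, so no hard estimate remains. The step I expect to demand the most care is the equivariance bookkeeping above -- tracking the interplay of $\Phi$, the $V$-action on $\widehat V$, the induced $V$-action on $\bigsqcup\{V_L\}$, and the rule $U(L,\eps,q)\mapsto U(L,\eps,w+q)$ -- together with the elementary but necessary verification that $\{N_{\bar B_r(0),s}(\phi)\cap\widehat V\}$ really is a neighborhood basis of $\phi$ in $\widehat V$. Once these are in place, the general principle that a collection of open sets containing a neighborhood basis at every point is a basis for the topology finishes the proof.
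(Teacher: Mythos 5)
Your proof is correct and follows the same route as the paper's (very terse) proof: openness via \cref{Open}, neighborhood basis via \cref{Nbhd}, with the translation-invariance of the family $\{U(L,\eps,q)\}$ used to reduce to the base point $0+H_L$. The paper leaves the equivariance bookkeeping implicit (it is signaled only by the remark just before \cref{Nbhd} that the collection is $V$-invariant), while you make it explicit; the substance is identical.
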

\begin{proof}
	By Proposition~\ref{Open} the sets $U(L,q,\eps)$ are open and
	by Lemma~\ref{Nbhd} the sets containing a given point form a 
	neighborhood basis of this point.
\end{proof}

\begin{Cor}\label{Corol1}
      Let $p\in V$, Then the set 
      \[
      \{U(L,\eps,p+q)\mid q\in H_L^+\text{ and }\eps>0\}
      \]
      is a neighborhood basis of the point $p+H_L\in V_L$ in $\bigsqcup\{V_L\mid L\in\Sigma\cup\{K\}\}$, in the topology imposed by $\Phi$.
\end{Cor}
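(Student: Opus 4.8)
The plan is to deduce this from Theorem~\ref{TopologyBasis}. First I would check that each set in the proposed family is an open neighbourhood of $p+H_L$. Openness is Proposition~\ref{Open}. For membership, write $D=(p+q)+B_\eps(0)+H_L^+$; taking $0\in B_\eps(0)$ and any $w\in H_L^+$ we get $(p+q)+0+w\in D$ and $p=\big((p+q)+w\big)+(-q-w)$ with $-q-w\in H_L$ (since $q,w\in H_L^+\subseteq H_L$), so $p\in D+H_L$ and hence $p+H_L$ lies in the $V_L$-stratum piece $(D+H_L)/H_L$ of $U(L,\eps,p+q)$. It therefore remains to show that every open neighbourhood of $p+H_L$ contains such a set, and by Theorem~\ref{TopologyBasis} it suffices to treat a basic neighbourhood $U(L',\eps',q')\ni p+H_L$: we must produce $q\in H_L^+$ and $\eps>0$ with $U(L,\eps,p+q)\subseteq U(L',\eps',q')$. (One could first reduce to $p=0$ using the translation invariance of the family and the $V$-equivariance of $\Phi$, but this is optional.)

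Next I would read off what the containment $p+H_L\in U(L',\eps',q')$ says. If $L=K$, then $p+H_L$ is the point $p\in V=V_K$, which must lie in the $V_K$-part $D'$ of $U(L',\eps',q')$ --- an open subset of $V$ --- and since $H_K^+=\{0\}$ we may pick $\eps>0$ small with $U(K,\eps,p)=p+B_\eps(0)\subseteq D'\subseteq U(L',\eps',q')$. If $L\neq K$, then the stratum $V_L$ must occur among the pieces of $U(L',\eps',q')$, which forces $L'$ to be a dual face with $L'\subseteq L$, and $p\in D'+H_L$ where $D'=q'+B_{\eps'}(0)+H_{L'}^+$. In that case, for any dual face $L''\supseteq L$ (hence also $L''\supseteq L'$) the $V_{L''}$-part of $U(L,\eps,p+q)$ is $(D+H_{L''})/H_{L''}$ with $D=(p+q)+B_\eps(0)+H_L^+$, and $V_K$ together with these are the only strata met by $U(L,\eps,p+q)$; since $D\subseteq D'$ implies $D+H_{L''}\subseteq D'+H_{L''}$, the whole inclusion $U(L,\eps,p+q)\subseteq U(L',\eps',q')$ reduces to the single inclusion $D\subseteq D'$ of the $V_K$-parts.

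The technical heart is then to achieve $D\subseteq D'$ by a ``push to infinity''. Write $p-q'=b'+c'+h$ with $b'\in B_{\eps'}(0)$, $c'\in H_{L'}^+$ and $h\in H_L$. I would use three geometric facts: (a) since $L\supseteq L'$ are dual faces, one has $H_L^+\subseteq\overline{H_{L'}^+}$ (the inclusion used in the proof of Proposition~\ref{Open}, cf.\ Equation~(\ref{InclusionCones}) after renaming the faces); (b) $\overline{H_{L'}^+}=\{v\in H_{L'}\mid\xi_\ell(v)\geq\xi_k(v)\text{ for }\ell\in L',\,k\in K\setminus L'\}$ is a closed convex cone in $H_{L'}$, hence closed under addition, and $H_{L'}^+ +\overline{H_{L'}^+}\subseteq H_{L'}^+$ since $H_{L'}^+$ is its relative interior; (c) fixing $q_0\in H_L^+$ and setting $q=tq_0$ with $t>0$, for $\ell\in L'$ and $k\in K\setminus L'$ the number $\xi_\ell(q+h)-\xi_k(q+h)$ is $0$ if $k\in L\setminus L'$ and equals $t\,(\xi_\ell(q_0)-\xi_k(q_0))+(\xi_\ell(h)-\xi_k(h))$ with $\xi_\ell(q_0)-\xi_k(q_0)>0$ if $k\in K\setminus L$, so (finitely many pairs) for $t$ large enough $q+h\in\overline{H_{L'}^+}$. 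Now fix such a $q=tq_0\in H_L^+$ and choose $\eps>0$ with $\eps<\eps'-\norm{b'}$. For $x\in B_\eps(0)$ and $v\in H_L^+$ we then have $b'+x\in B_{\eps'}(0)$ and, using (a) for $v\in\overline{H_{L'}^+}$ and then (b), $c'+h+q+v=c'+\big((q+h)+v\big)\in H_{L'}^+ +\overline{H_{L'}^+}\subseteq H_{L'}^+$; hence $(p+q)+x+v=q'+(b'+x)+(c'+h+q+v)\in q'+B_{\eps'}(0)+H_{L'}^+=D'$. This gives $D\subseteq D'$ and finishes the proof.

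I expect the main obstacle to be exactly step (c) together with (a) and (b): one has to keep straight the two cones $H_{L'}^+\subseteq\overline{H_{L'}^+}$ attached to $L'$ and the cone $H_L^+$ attached to the larger face $L$, and to see that pushing the unbounded direction $q\in H_L^+$ far out absorbs the bounded error term $h\in H_L$ arising from the membership $p\in D'+H_L$. Everything else is bookkeeping with the definition of $U(L,\eps,q)$ and of the stratified space $\bigsqcup\{V_L\}$.
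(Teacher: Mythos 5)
Your proof is correct, but it takes a genuinely different route from the paper's. The paper reduces by translation to $p=0$, observes that $H_L\in V_L$ lies in each $U(L,\eps,q)$ with $q\in H_L^+$, and then simply invokes Lemma~\ref{Nbhd} together with Proposition~\ref{Open}: Lemma~\ref{Nbhd} already produces, for every ball $N_{\bar B_r(0),s}(\nu_L)$ in $C_\Bnd(V)$, a set $U(L,\eps,q)$ with $q\in H_L^+$ whose $\Phi$-preimage lies inside that ball, so the desired sets are sandwiched cofinally into the standard neighborhood basis of the horofunction $\nu_L$ coming from the topology of $C_\Bnd$. You instead take Theorem~\ref{TopologyBasis} as a black box and prove cofinality inside the stratified picture itself: after the bookkeeping that reduces $U(L,\eps,p+q)\subseteq U(L',\eps',q')$ to the single inclusion $D\subseteq D'$ of the $V_K$-parts, you achieve $D\subseteq D'$ by pushing $q$ far out along $H_L^+$, using the cone inclusion $H_L^+\subseteq\overline{H_{L'}^+}$ (the relabeled Inclusion~(\ref{InclusionCones})), the semigroup property $H_{L'}^+ + \overline{H_{L'}^+}\subseteq H_{L'}^+$, and the observation that a large multiple of a fixed $q_0\in H_L^+$ absorbs the bounded correction $h\in H_L$. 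All three facts (a)--(c) check out, and the case split $L=K$ versus $L\in\Sigma$ is handled correctly. The upshot is that your argument never returns to the function-space $C_\Bnd(V)$ and so is more elementary once Theorem~\ref{TopologyBasis} is granted, at the cost of reproving by cone arithmetic a cofinality that the paper extracts for free from the analytic estimate in Lemma~\ref{Nbhd}; your step (c) is new relative to the paper's proof of this corollary, though it is close in spirit to Cases~(ii)--(iii) of the proof of Proposition~\ref{Open}.
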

\begin{proof}
      Assume first that $p=0$. Then $0\in q+B_\eps(0)+H_L^+ +H_L=B_\eps(0)+H_L$, hence each of these open sets contains the point $H_L\in V_L$.
      By Lemma~\ref{Nbhd}, these sets form a neighborhood basis of the point. The general claim follows now by translation by $p$.
\end{proof}

\begin{Cor}\label{Corol2}
	Let $(p_n)_{n\in\NN}$ be a sequence in $V$. Then the sequence of normalized horofunctions
	\[
	u\longmapsto \nu(p_n-u)-\nu(p_n)
	\]
	converges to the normalized horofunction
	\[
	u\longmapsto \nu_L(p-u)-\nu(p),
	\]
	for $L\in\Sigma\cup\{K\}$ and $p\in V$ if and only if for each
	$q\in H_L^+$, we have 
	\[
	\lim_{n\to\infty} d(p_n,p+q+H_L^+)=0.
	\]
\end{Cor}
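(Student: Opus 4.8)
The plan is to push everything through the bijection $\Phi\colon\widehat V\to\bigsqcup\{V_{L'}\mid L'\in\Sigma\cup\{K\}\}$ of Theorem~\ref{Stratification}, which by Theorem~\ref{TopologyBasis} becomes a homeomorphism once the target carries the topology generated by the sets $U(L',\eps,q)$. Under $\Phi$ the normalized horofunction $u\mapsto\nu(p_n-u)-\nu(p_n)$ is carried to the point $p_n\in V=V_K$, and the candidate limit $u\mapsto\nu_L(p-u)-\nu_L(p)$ to the point $p+H_L\in V_L$; for $L=K$ read this as $p\in V_K$, with $\nu_K=\nu$ and $H_K^+=\{0\}$. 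So it suffices to show that $p_n\to p+H_L$ in $\bigsqcup V_{L'}$ if and only if $\lim_n d(p_n,p+q+H_L^+)=0$ for every $q\in H_L^+$.

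First I would invoke Corollary~\ref{Corol1}, which says that the sets $U(L,\eps,p+q)$, for $q\in H_L^+$ and $\eps>0$, form a neighborhood basis of $p+H_L$. Hence $p_n\to p+H_L$ is equivalent to: for every $q\in H_L^+$ and every $\eps>0$ one has $p_n\in U(L,\eps,p+q)$ for all large $n$. Next I would unwind the definition of $U(L,\eps,p+q)$. With $D=p+q+B_\eps(0)+H_L^+$, this set is the disjoint union of $D\subseteq V_K$ with pieces lying in the strata $V_{L'}$ for dual faces $L'\supseteq L$. Since $p_n$ sits in the bottom stratum $V_K$ and the union is disjoint, $p_n\in U(L,\eps,p+q)$ holds if and only if $p_n\in D$, and $p_n\in D$ says precisely that $p_n=p+q+b+h$ for some $h\in H_L^+$ and some $b$ with $d(0,b)<\eps$, i.e. that $d(p_n,p+q+H_L^+)<\eps$.

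Putting the two steps together, $p_n\to p+H_L$ precisely when for every $q\in H_L^+$ and every $\eps>0$ the inequality $d(p_n,p+q+H_L^+)<\eps$ holds for $n$ large, which is exactly the assertion $\lim_n d(p_n,p+q+H_L^+)=0$ for each $q\in H_L^+$. I do not expect a genuine obstacle here; the only points that need care are the bookkeeping of the disjoint union — that the $V_K$-component of $U(L,\eps,p+q)$ is exactly $D$, and that a point of $V_K$ can meet no other component — together with the elementary equivalence $x\in p+q+B_\eps(0)+H_L^+\iff d(x,p+q+H_L^+)<\eps$. The degenerate case $L=K$ falls out of the same argument and is consistent with $V\hookrightarrow\widehat V$ being a topological embedding.
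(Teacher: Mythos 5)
Your proposal is correct and follows exactly the paper's (one-line) argument: reduce via the homeomorphism $\Phi$ and the neighborhood basis of Corollary~\ref{Corol1}, and observe that the $V_K$-component of $U(L,\eps,p+q)$ is $p+q+B_\eps(0)+H_L^+$, whose membership condition is precisely $d(p_n,p+q+H_L^+)<\eps$. You have merely spelled out the bookkeeping that the paper leaves implicit.
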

\begin{proof}
	{We have $U(L,\eps,p+q)\cap V=p+q+B_\eps(0)+H_L^+$.}
\end{proof}

This yields in particular another proof of Proposition~\ref{BusemannProp}.

\begin{Cor}\label{Corol3}
	For $L\in\Sigma\cup\{K\}$, the closure of $V_L$ in $\bigsqcup\{V_L\mid L\in\Sigma\cup\{K\}\}$, in the topology imposed by $\Phi$, is 
	$\bigsqcup\{V_{L'}\mid L'\in\Sigma\cup\{K\}\text{ with }L\supseteq L'\}$.
\end{Cor}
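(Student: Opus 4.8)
The plan is to read off the closure relation directly from the explicit neighborhood basis of \cref{Corol1,TopologyBasis}, arguing entirely inside $\mathcal V:=\bigsqcup\{V_{L'}\mid L'\in\Sigma\cup\{K\}\}$ with the topology imposed by $\Phi$. Fix $L\in\Sigma\cup\{K\}$ and write $S_L=\bigsqcup\{V_{L'}\mid L'\in\Sigma\cup\{K\}\text{ and }L'\subseteq L\}$ for the set asserted to equal $\overline{V_L}$. The single fact that drives the argument is combinatorial: from the Definition preceding \cref{Nbhd}, a basic open set $U(L',\eps,q)$ is contained in $V_K\cup\bigcup\{V_{L''}\mid L''\in\Sigma\text{ and }L''\supseteq L'\}$, and since $D=q+B_\eps(0)+H_{L'}^+$ is nonempty it has nonempty intersection with each of these strata; hence the set of strata that $U(L',\eps,q)$ meets is exactly $\{K\}\cup\{L''\in\Sigma\mid L''\supseteq L'\}$. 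For the formal top element $H_K^+=\{0\}$ and $U(K,\eps,q)=q+B_\eps(0)\subseteq V_K$, so the set of strata met is just $\{K\}$.

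For the inclusion $\overline{V_L}\subseteq S_L$ I would show that $\mathcal V\setminus S_L$ is open. A point not in $S_L$ lies in some stratum $V_{L'}$ with $L'\not\subseteq L$, and by \cref{Corol1} the sets $U(L',\eps,p+q)$ with $q\in H_{L'}^+$ and $\eps>0$ form a neighborhood basis of it. By the combinatorial fact, such a set meets only strata indexed by $K$ or by some $L''\supseteq L'$; none of these indices is $\subseteq L$, since $K\subseteq L$ would force $L=K$ and hence $L'\subseteq L$, while $L''\subseteq L$ together with $L''\supseteq L'$ would give $L'\subseteq L$ outright --- both contradicting $L'\not\subseteq L$. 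Therefore $U(L',\eps,p+q)\cap S_L=\emptyset$, so the point has a neighborhood disjoint from $S_L$. Thus $S_L$ is closed, and since $V_L\subseteq S_L$ we get $\overline{V_L}\subseteq S_L$.

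For the inclusion $S_L\subseteq\overline{V_L}$ I would check that every point $x\in V_{L'}$ with $L'\subseteq L$ lies in $\overline{V_L}$, i.e.\ that every basic neighborhood $U(L',\eps,p+q)$ of $x$ (again from \cref{Corol1}) meets $V_L$. If $L=K$ this holds because $U(L',\eps,p+q)$ always meets $V_K$ via the piece $D\subseteq V$. If $L\in\Sigma$, then $L$ is a dual face with $L\supseteq L'$, so $L$ belongs to the index set of strata met by $U(L',\eps,p+q)$, and the corresponding piece $(D+H_L)/H_L$ is nonempty. In either case every neighborhood of $x$ meets $V_L$, so $x\in\overline{V_L}$. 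Hence $S_L\subseteq\overline{V_L}$, and combining the two inclusions gives $\overline{V_L}=S_L$.

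I do not expect a genuine obstacle; the whole proof is an unwinding of the definition of $U(L,\eps,q)$ through \cref{Corol1}. The two points that need care are the bookkeeping in the disjoint union --- distinct dual faces $L',L''$ may have $H_{L'}=H_{L''}$, but they still index distinct components of $\mathcal V$, so ``the strata met by $U(L',\eps,q)$'' is unambiguous --- and the uniform treatment of the formal element $K$, for which $U(K,\eps,q)=q+B_\eps(0)$ has to be handled as its own (easy) case. As an immediate consequence one gets $V_{L'}\subseteq\overline{V_L}$ if and only if $L\supseteq L'$, so the closure poset of the stratification of $\widehat V$ is $(\Sigma\cup\{K\},\subseteq)$, which is exactly the face poset of the dual polyhedron $B^\vee$; together with \cref{Stratification} this establishes the Theorem announced in the introduction.
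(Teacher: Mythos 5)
Your proof is correct, and it is the argument the paper intends: Corollary~\ref{Corol3} is stated without proof immediately after Corollaries~\ref{Corol1} and~\ref{Corol2}, and the point is exactly that the closure relation can be read off from the explicit neighborhood basis of Theorem~\ref{TopologyBasis} and Corollary~\ref{Corol1}. Your two observations about the bookkeeping — that distinct dual faces with the same $H_L$ still index distinct components of the disjoint union, and that the formal element $K$ needs its own small case — are precisely the right things to keep straight; the unwinding is otherwise exactly as you describe.
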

The combinatorial structure of the stratification of $\widehat V$ in Theorem~\ref{Stratification}, with respect to the closure operation, 	
is therefore poset-isomorphic to the poset $(\Sigma\cup\{K\},\subseteq)$.
This poset, in turn, is anti-isomorphic to the poset of all proper faces of $B$, including the empty face.

Now we show that there is a homeomorphism between $\widehat V$ and the dual polyhedron $B^\vee$ of $B$.
We use generalized moment maps, similarly to \cite{Fulton,JiSchilling}.
To construct such a homeomorphism we define auxiliary maps.
For $L\in\Sigma\cup\{K\}$ we put 
\begin{align*}
	a_L(p)&=\sum_{k\in L}\exp(\xi_k(p))\xi_k\\
	b_L(p)&=\sum_{k\in L}\exp(\xi_k(p))\\
	c_L(p)&=\frac{a_L}{b_L}.
\end{align*}
\begin{Rem}\label{Taylor}
	We note the following.
\begin{enumerate}[(i)]
	\item The Taylor expansion of $b_L(p+tv)$ at the point $p$ is
	\[\textstyle
	b_L(p+tv)=\sum_{k\in L}\exp(\xi_k(p))(1+t\xi_k(v)+\frac{1}{2}t^2\xi_k(v)^2+\cdots).
	\]
	Hence the derivative of $b_L$ at $p$ is \[Db_L(p)(v)=a_L(p)(v)\] and the Hessian of $b_L$ at $p$ is the quadratic form
	\[Hb_L(p)(v)=\sum_{k\in L}\exp(\xi_k(p))\xi_k(v)^2.\]
	\item Therefore $c_L(p)$ is the derivative of the map \[f_L(p)=\log(b_L(p)).\]
	\item The image of $c_L$ is contained in the convex hull $B_L^\vee$ of $\{\xi_k\mid k\in L\}$, which is a face in the dual polyhedron $B^\vee\subseteq V^\vee$.
	\item If $v\in H_L$, then $c_L(p+v)=c_L(p)$, because $a_L(p+v)=\exp(\xi_L(v))a_L(p)$ and $b_L(p+v)=\exp(\xi_\ell(v))b_L(v)$,
	for any choice of $\ell\in L$.
	\item Since $V\longrightarrow V_L$ is an open map, the map $V_L\longmapsto V^\vee$, $p+H_L\longmapsto c_L(p)$ is continuous.
\end{enumerate}
\end{Rem}
\begin{Lem}\label{Injective}
	Let $L\in\Sigma\cup\{K\}$. Then the map $p+H_L\longmapsto c_L(p)$ is injective and open on $V_L$.
\end{Lem}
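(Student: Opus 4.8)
The plan is to treat injectivity and openness separately, using the gradient-map structure recorded in Remark~\ref{Taylor}. For injectivity, I would exploit that $c_L = \nabla f_L$ where $f_L(p) = \log b_L(p)$. First I would observe that $f_L$ is a convex function on $V$: by Remark~\ref{Taylor}(i) the Hessian is $Hb_L(p)(v) = \sum_{k \in L}\exp(\xi_k(p))\xi_k(v)^2 \geq 0$, so $f_L$ is convex (strictly so in the directions transverse to $H_L$). Since $c_L(p+v) = c_L(p)$ for $v \in H_L$ by Remark~\ref{Taylor}(iv), the map descends to a well-defined map $\bar c_L$ on $V_L = V/H_L$, and it is the gradient of the induced convex function $\bar f_L$ on $V_L$. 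The key point is that $\bar f_L$ is \emph{strictly} convex on $V_L$: if $v \notin H_L$, then $\xi_k(v) \neq \xi_\ell(v)$ for some $k,\ell \in L$, so the Hessian sum has a strictly positive term in direction $v$; hence $Hb_L(p)(v) > 0$ for every $p$ and every nonzero $v \in V_L$. A strictly convex $C^2$ function has injective gradient (if $\nabla\bar f_L(x) = \nabla\bar f_L(y)$ with $x \neq y$, restrict to the segment from $x$ to $y$ and get a contradiction with strict monotonicity of the derivative along that segment). This gives injectivity of $\bar c_L$, hence of $p + H_L \mapsto c_L(p)$.

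For openness, I would argue that $\bar c_L\colon V_L \to V^\vee$ is an immersion whose image lies in the affine span of its target face, and then invoke invariance of domain. Concretely: the derivative of $\bar c_L$ at a point is (the descent of) the Hessian $Hb_L(p)$, which by the strict convexity just established is a positive-definite bilinear form on $V_L$, hence a linear isomorphism from $V_L$ onto its image. By Remark~\ref{Taylor}(iii) the image of $c_L$ lies in $B_L^\vee$, the convex hull of $\{\xi_k \mid k \in L\}$; let $E_L$ denote the affine span of this set, an affine subspace of $V^\vee$. Since $\dim V_L = \dim(V/H_L) = \dim F_L = \dim E_L$ (using the relation $\dim V = \dim H_L + \dim F_L$ from the discussion preceding Lemma~\ref{TriangleInEq}), the map $\bar c_L\colon V_L \to E_L$ is a local diffeomorphism between manifolds of the same finite dimension. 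A continuous injective local homeomorphism between such manifolds is an open map by invariance of domain; hence $\bar c_L$ is open as a map into $E_L$, and therefore $p + H_L \mapsto c_L(p)$ is open onto its natural target.

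The main obstacle is pinning down the dimension bookkeeping and the precise target of openness. The statement says "$c_L$ is open on $V_L$" without specifying the codomain; one must be careful that $c_L$ is \emph{not} open as a map into all of $V^\vee$ when $L \subsetneq K$ (the image is a lower-dimensional face), so the honest reading is openness onto its image inside the affine span $E_L$ of $\{\xi_k \mid k \in L\}$, and this is exactly what the dimension count $\dim V_L = \dim E_L$ makes work. For $L = K$ one has $H_K = \{0\}$, $V_K = V$, and $E_K = V^\vee$ (since $\xi_0,\dots,\xi_m$ span, as $B$ is bounded), so the statement reads as genuine openness into $V^\vee$. The other minor point to handle cleanly is that $f_L$ and $c_L$ are smooth, so the Hessian computation of Remark~\ref{Taylor}(i) literally applies; this is immediate since exponentials and finite sums are smooth.
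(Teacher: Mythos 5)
Your overall strategy — exploit that $c_L$ is the gradient of $f_L=\log b_L$, prove strict convexity of $f_L$ transverse to $H_L$ to get injectivity of the gradient, and then use the rank of the derivative for openness — is exactly the paper's approach. But there is a genuine gap in the way you establish strict convexity, and it stems from systematically confusing the Hessian of $b_L$ with the Hessian of $f_L$.

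You write that $Hb_L(p)(v)=\sum_{k\in L}\exp(\xi_k(p))\xi_k(v)^2\ge 0$ ``so $f_L$ is convex,'' and later that $v\notin H_L$ gives $Hb_L(p)(v)>0$, hence $\bar f_L$ is strictly convex on $V_L$. Neither inference is valid. First, nonnegativity of $Hb_L$ says $b_L$ is convex, not that $\log b_L$ is; the logarithm of a positive convex function need not be convex. Second, and more importantly, the kernel of the quadratic form $Hb_L(p)$ is $\bigcap_{k\in L}\ker\xi_k$, which is in general a \emph{proper} subspace of $H_L$: any $v$ with $\xi_k(v)=1$ for all $k\in L$ lies in $H_L$ yet has $Hb_L(p)(v)>0$. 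So $Hb_L$ does not detect $H_L$, and ``$Hb_L(p)(v)>0$ for $v\notin H_L$'' is not the statement you need. What you need is that $Hf_L(p)$ is positive definite on a complement of $H_L$, and by the quotient rule
\[
Hf_L(p)(v)=\frac{b_L(p)\,Hb_L(p)(v)-\bigl(Db_L(p)(v)\bigr)^2}{b_L(p)^2},
\]
whose nonnegativity (with equality iff all $\xi_k(v)$, $k\in L$, coincide) is precisely a Cauchy--Schwarz/Young-type inequality for the weights $\exp(\xi_k(p))$. This is the step the paper carries out explicitly and that is missing from your argument; without it, neither the injectivity nor the rank claim in the openness part (where you again invoke positive definiteness of ``$Hb_L$'') is supported. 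Once you replace $Hb_L$ by $Hf_L$ and supply the Cauchy--Schwarz estimate, the rest of your argument — monotonicity of the gradient for injectivity, and full rank of $Dc_L$ on $W_L\cong V_L$ (or invariance of domain into the affine span $F_L$) for openness — lines up with the paper's proof. The dimension bookkeeping $\dim V_L=\dim F_L$ you spell out is correct and is implicitly used in the paper's final sentence.
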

\begin{proof}
	Let $W_L\subseteq V$ be a linear subspace such that $V=W_L\oplus H_L$.
	We claim that 
	\[
	(c_L(q)-c_L(p))(q-p)>0
	\]
	holds for all $p,q\in W_L$ with $p\neq q$. This will clearly show that $c_L$ is injective on $W_L$.
	The Hessian of the map $f_L$ is 
	\[
	Hf_L(p)(v)=\frac{b_L(p)Hb_L(p)(v)-Db_L(p)(v)^2}{b_L(p)^2}.
	\]
	We claim that this quadratic form is positive definite.
	We put $e_k=\exp(\xi_k(p))$ for short, and
	we have to show that 
	\[
	\sum_{k,\ell\in L}e_k e_\ell\xi_k(v)^2>\sum_{k,\ell\in L}e_k e_\ell\xi_k(v)\xi_\ell(v)
	\]
	holds for all $v\neq 0$. Equivalently, we have to show for $v\neq 0$ that 
	\begin{equation}\label{Young}
	\sum_{(k,\ell)\in M}e_k e_\ell(\xi_\ell(v)^2+\xi_k(v)^2)>
	\sum_{(k,\ell)\in M} 2 e_k e_\ell\xi_\ell(v)\xi_k(v),
	\end{equation}
	where $M=\{(k,\ell)\in L\times L\mid k<\ell\}$.
	Young's Inequality says that $x^2+y^2\geq 2xy$, with equality if and only if $x=y$.
	Hence the left-hand side of Inequality~(\ref{Young}) is not smaller than the right-hand side.
	If we would have equality, then we would have $\xi_k(v)=\xi_\ell(v)$ for all $k,\ell\in L$ and thus $v=0$.
	Therefore the Hessian of $f_L$ is positive definite. This implies by convexity that
	\[
	(Df_L(q)-Df_L(p))(q-p)>0
	\]
	holds for all $p,q\in W_L$ with $p\neq q$, see eg.~\cite{RW98}~Thm.~2.14.
	This follows also directly, since
	\[
	(Df_L(q)-Df_L(p))(q-p)=\int_0^1Hf_L((1-t)p+tq)(q-p)dt.
	 \]
	Since $Hf_L$ is positive definite, the derivative $Dc_L(p)$ of $c_L$ has rank $\dim(W_L)=\dim(V_L)$ at every point $p\in W_L$.
	Hence $c_L$ is an open map on $W_L\cong V_L$. 
\end{proof}

\begin{Def}
	We define a map 
	\[
	c:\bigsqcup\{V_L\mid L\in\Sigma\cup\{K\}\}\longmapsto B^\vee
	\]
	by putting
	\[
	c(p+H_L)=c_L(p)\text{ for }L\in\Sigma\text{ and } c(p)=c_K(p)\text{ for }p\in V.
	\]
	\end{Def}
\begin{Lem}\label{IsContinuous}
	The map $c$ is continuous.
\end{Lem}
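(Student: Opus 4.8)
The plan is to check continuity of $c$ pointwise, using the explicit neighbourhood bases furnished by Corollary~\ref{Corol1} and Theorem~\ref{TopologyBasis}. The easy part: the stratum $V_K=V$ is open in $\bigsqcup\{V_L\mid L\in\Sigma\cup\{K\}\}$, since it is covered by the sets $U(K,\eps,q)=q+B_\eps(0)$, and on it $c$ agrees with $c_K$, a ratio of two continuous functions of $p\in V$ whose denominator $\sum_{k\in K}\exp(\xi_k(p))$ never vanishes; hence $c$ is continuous at every point of $V$. The same observation (or Remark~\ref{Taylor}(v)) shows that the restriction of $c$ to any single stratum $V_L$ is continuous. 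By Corollary~\ref{Corol3}, the only case that is not yet covered is continuity at a point $x=p+H_L\in V_L$ with $L\in\Sigma$, where the larger strata $V_{L'}$ with $L'\supseteq L$ (and $V_K$) accumulate.

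For such a point I would fix $\eta>0$ and exhibit a basic neighbourhood $U(L,\eps,p+q)$ of $x$, with $q\in H_L^+$, on which $c$ stays within $\eta$ of $c(x)=c_L(p)$. First I choose $\eps\in(0,1]$ so small that $\|c_L(p+b)-c_L(p)\|<\eta/2$ for all $b\in B_\eps(0)$; this is possible because $b\mapsto c_L(p+b)$ is a ratio of continuous functions with non-vanishing denominator, hence continuous on $V$. Next, unwinding the definition of $U(L,\eps,p+q)$ and using that each $c_S$ (for $S\in\Sigma\cup\{K\}$) is $H_S$-invariant (Remark~\ref{Taylor}(iv)) together with the fact that $H_L^+$ is an additive subsemigroup, one checks that every value taken by $c$ on $U(L,\eps,p+q)$ has the form
\[
c_S(p+q'+b),\qquad S\in\Sigma\cup\{K\},\quad L\subseteq S,\quad b\in B_\eps(0),\quad q'=q+h\ \text{ for some }h\in H_L^+.
\]
Writing $\rho(v)=\min\{(\xi_\ell-\xi_k)(v)\mid\ell\in L,\ k\in K\setminus L\}$ for $v\in H_L$, one has $\rho(v)>0$ exactly when $v\in H_L^+$ and $\rho(q+h)\ge\rho(q)$ for all $h\in H_L^+$; moreover $\rho(q)$ can be made arbitrarily large by choosing $q$ to be a large positive multiple of a fixed element of $H_L^+$.

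The heart of the argument is then the estimate on $c_S(p+q'+b)$. Since $q'\in H_L$, the numbers $\xi_\ell(q')$ for $\ell\in L$ all equal one common value $t$, whereas $\xi_k(q')\le t-\rho(q')\le t-\rho(q)$ for $k\in K\setminus L$. Factoring $e^{t}$ out of the numerator and denominator of $c_S=a_S/b_S$ rewrites $c_S(p+q'+b)$ as $(A+e^{-\rho(q')}E_1)/(D+e^{-\rho(q')}E_2)$, where $A=\sum_{\ell\in L}e^{\xi_\ell(p+b)}\xi_\ell$ and $D=\sum_{\ell\in L}e^{\xi_\ell(p+b)}$, where $\|E_1\|$ and $|E_2|$ are bounded above by constants depending only on $p$ (using $\eps\le1$ to control the $|\xi_k(b)|$), and where $D$ is bounded below by a positive constant depending only on $p$. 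Since $A/D=c_L(p+b)$, choosing $q\in H_L^+$ with $\rho(q)$ large enough forces $\|c_S(p+q'+b)-c_L(p+b)\|<\eta/2$ simultaneously for all admissible $S$, $q'$ and $b$; combined with the choice of $\eps$ this gives $\|c_S(p+q'+b)-c_L(p)\|<\eta$, i.e.\ $c(U(L,\eps,p+q))\subseteq B_\eta(c_L(p))$, which is the required continuity at $x$.

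I expect the one genuinely technical point to be the uniformity of this last estimate: checking that the implied constants really are independent of the index set $S$, of the direction $q'$ far out in the cone $H_L^+$, and of the small perturbation $b$. Conceptually it only records that inside $H_L^+$ the functionals $\xi_\ell$, $\ell\in L$, exponentially dominate the remaining $\xi_k$, so that $c_S$ evaluated deep in this cone is driven close to $c_L$; everything else is bookkeeping with the neighbourhood basis $U(L,\eps,q)$. As a degenerate check, when $L=K$ one has $H_K^+=\{0\}$, the set $K\setminus L$ is empty, and $U(K,\eps,p)=p+B_\eps(0)$, so the first paragraph already settles this case.
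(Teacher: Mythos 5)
Your proposal is correct and follows essentially the same strategy as the paper's proof: reduce to continuity at a point $p+H_L$, pick $\eps$ small to control $c_L(p+b)-c_L(p)$, then choose $q\in H_L^+$ far enough out so that the contributions from indices in $K\setminus L$ are exponentially suppressed, and estimate the resulting ratio. The paper encodes this via the explicit inequalities (i1)--(i4) and a direct bound on $\lVert c_{L'}(p)-c_L(q_0)\rVert$; you package the same estimate through the auxiliary quantity $\rho(q)=\min_{\ell\in L,\,k\notin L}(\xi_\ell-\xi_k)(q)$ and the bounded remainders $E_1,E_2$, which is a mild notational streamlining rather than a different argument.
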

\begin{proof}
	Let $q_0\in V$ and $L\in\Sigma\cup\{K\}$. 
	We show that $c$ is continuous at the point \[q_0+H_L\in\bigsqcup\{V_L\mid L\in\Sigma\cup\{K\}\}.\]
	To this end, we show that given a real number $s>0$,
	we can choose a neighborhood $U(L,\eps,q_0+q)$ of $q_0+H_L$ in such a way
	that $||c(p+H_{L'})-c(q_0+H_L)||<s$ holds for all $p+H_{L'}\in U(L,\eps,q_0+q)$.
	Here $||.||$ is the dual Euclidean norm on $V^\vee$ determined by the Euclidean norm $||.||$ on $V$.

	We choose $\eps>0$ in such a way that 
	\begin{align}
	\label{i1} \frac{||a_L(q_0+q_1)-a_L(q_0)||b_L(q_0)}{b_L(q_0)^2}    & <\frac{s}{8} \\
	\label{i2} \frac{||a_L(q_0)||\,|b_L(q_0+q_1)-b_L(q_0)|}{b_L(q_0)^2} & <\frac{s}{8} \\
	\intertext{and}
	\label{i3} b_L(q_0) & <2b_L(q_0+q_1)
	\intertext{hold for all $q_1\in B_\eps(0)$. Then we choose $\ell\in L$ and $q\in H_L^+$ in such a way that}
	\label{i4}
	\frac{\mu}{b_L(q_0)^2}\sum_{j\in K}\exp(\xi_j(q_0+q_1)-(\xi_\ell-\xi_j)(q)) & \leq \frac{s}{8}
	\end{align}
	holds for all $q_1\in B_\eps(0)$, where
	\[\textstyle \mu=\max\{||a_L(q_0)||,||\xi_1||b_L(q_0),\cdots,||\xi_m||b_L(q_0)\}.\]

	Suppose that $p+H_{L'}\in U(L,\eps,q_0+q)$. Then $L'\supseteq L$ and \[p+v=q_0+q_1+q_2+q,\] for some $v\in H_{L'}$, $q_1\in B_\eps(0)$ and 
	$q_2\in H_L^+$.
	We compute
	\begin{align*}
	b_{L'}(p+v) & = \sum_{k\in L}\exp(\xi_k(q_0+q_1+q_2+q))+\sum_{j\in L'\setminus L}\exp(\xi_j(q_0+q_1+q_2+q))\\
	& =\exp(\xi_\ell(q_2+q))\bigg(\sum_{k\in L}\exp(\xi_k(q_0+q_1))+\sum_{j\in L'\setminus L}\exp(\xi_j(q_0+q_1)-(\xi_\ell-\xi_j)(q_2+q))\bigg)\\
        &=\exp(\xi_\ell(q_2+q))\bigg(b_L(q_0+q_1)+\sum_{j\in L'\setminus L}\exp(\xi_j(q_0+q_1)-(\xi_\ell-\xi_j)(q_2+q))\bigg).
	\end{align*}
	We expand $a_{L'}$ similarly and obtain
	\[
	c_{L'}(p)=c_{L'}(p+v)=\frac{a_L(q_0+q_1)+a'}{b_L(q_0+q_1)+b'},
	\]
	with 
	\[
	a'=\sum_{j\in L'\setminus L}\exp(\xi_j(q_0+q_1)-(\xi_\ell-\xi_j)(q_2+q))\xi_j
	\]
	and
	\[
	b'=\sum_{j\in L'\setminus L}\exp(\xi_j(q_0+q_1)-(\xi_\ell-\xi_j)(q_2+q)).
	\]
	We note that 
	\[ \frac{||a'||b_L(q_0)}{b_L(q_0)^2}  <\frac{s}{8}\text{  and } \frac{||a_L(q_0)||b'}{b_L(q_0)^2}  <\frac{s}{8} \] by Inequality~(\ref{i4})
	and that
	\[\frac{1}{2}b_L(q_0)^2 \leq b_L(q_0)(b_L(q_0+q_1)+b')\] by Inequality~(\ref{i3}).
	Hence we have, by the Inequalities~(\ref{i1}) and (\ref{i2}),
	\begin{align*}
	\big\|c_{L'}(p)-c_L(q_0)\big\| & = \bigg\|\frac{a_L(q_0+q_1)+a'}{b_L(q_0+q_1)+b'}-\frac{a_L(q_0)}{b_L(q_0)}\bigg\| \\
	& = \frac{||(a_L(q_0+q_1)+a')b_L(q_0)-a_L(q_0)(b_L(q_0+q_1)+b')||}{b_L(q_0)(b_L(q_0+q_1)+b')} \\
	& \leq 2\frac{||(a_L(q_0+q_1)+a')b_L(q_0)-a_L(q_0)(b_L(q_0+q_1)+b')||}{b_L(q_0)^2} \\
	& \leq 2\frac{||(a_L(q_0+q_1)+a'-a_L(q_0)||b_L(q_0)}{b_L(q_0)^2} \\
	&\qquad{}+2\frac{||a_L(q_0)||(b_L(q_0+q_1)+b'-b_L(q_0))}{b_L(q_0)^2} \\
	& \leq 2\frac{(||(a_L(q_0+q_1)-a_L(q_0)||b_L(q_0)}{b_L(q_0)^2} + 2\frac{||a'||b_L(q_0)}{b_L(q_0)^2} \\
	&\qquad{}+2\frac{||a_L(q_0)||(|b_L(q_0+q_1)-b_L(q_0)|)}{b_L(q_0)^2} + 2\frac{||a_L(q_0)||b'}{b_L(q_0)^2}\\
	&\leq s.
	\qedhere
	\end{align*}
\end{proof}
We need at this stage a topological result.
\begin{Lem}\label{Topo}
	Let $f:(X,A)\longrightarrow (Y,B)$ be a continuous map of compact topological pairs. Assume  
	$A=f^{-1}(B)$ and that the restriction
	$f:X\setminus A\longrightarrow Y\setminus B$ is injective and open. If $X\setminus A$ and $Y\setminus B$
	are homeomorphic to $\RR^n$, then the restriction $f:X\setminus A\longrightarrow Y\setminus B$ is surjective.
\end{Lem}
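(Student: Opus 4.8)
The plan is a soft point-set topology argument. Write $U:=f(X\setminus A)$ for the image of the restriction. I will show that $U$ is a nonempty subset of $Y\setminus B$ which is simultaneously open and closed in $Y\setminus B$, and then conclude from the connectedness of $Y\setminus B\cong\RR^n$ that $U=Y\setminus B$, which is exactly the asserted surjectivity.

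The key observation is the set-theoretic identity
\[
f(X\setminus A)=f(X)\setminus B.
\]
For the inclusion $\subseteq$, note that $f(X\setminus A)\subseteq f(X)$ trivially, and that $f(X\setminus A)$ is disjoint from $B$: if $f(x)\in B$ then $x\in f^{-1}(B)=A$. For the inclusion $\supseteq$, take $y\in f(X)\setminus B$ and choose $x\in X$ with $f(x)=y$; since $y\notin B$ and $A=f^{-1}(B)$, we get $x\notin A$, so $y\in f(X\setminus A)$. It is precisely here that the hypothesis $A=f^{-1}(B)$ is used, and used as an equality: the first inclusion needs $A\supseteq f^{-1}(B)$ and the second needs $A\subseteq f^{-1}(B)$.

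Now $X$ is compact and $f$ is continuous, so $f(X)$ is compact, hence closed in the Hausdorff space $Y$; therefore $U=f(X)\cap(Y\setminus B)$ is closed in the subspace $Y\setminus B$. On the other hand, $U=f(X\setminus A)$ is open in $Y\setminus B$ because the restriction $f\colon X\setminus A\to Y\setminus B$ is an open map by hypothesis. Since $X\setminus A\cong\RR^n$ is nonempty, so is $U$; and $Y\setminus B\cong\RR^n$ is connected. A nonempty clopen subset of a connected space equals the whole space, so $U=Y\setminus B$, proving surjectivity.

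There is no genuine obstacle here; the argument only requires a little care at two points. First, one must exploit $A=f^{-1}(B)$ as an equality rather than a mere inclusion, as noted above. Second, one uses the mild background assumption that the spaces are Hausdorff, needed so that the compact image $f(X)$ is closed; this is harmless, as in all our applications $X$ and $Y$ are compact metric spaces. It is worth noting that the full strength of the hypotheses on $X\setminus A$ and $Y\setminus B$ is not needed: $X\setminus A\cong\RR^n$ is used only to ensure $X\setminus A\neq\emptyset$, and $Y\setminus B\cong\RR^n$ only to ensure connectedness, so in particular no appeal to invariance of domain is required for surjectivity.
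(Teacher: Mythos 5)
Your proof is correct, and it takes a genuinely different and more elementary route than the paper. The paper passes to the quotients $X/A$ and $Y/B$, identifies them with $\mathbb{S}^n$, and runs a degree argument in singular homology using excision and the long exact sequence; you instead show directly that $f(X\setminus A)=f(X)\setminus B$ is nonempty, open in $Y\setminus B$ (by openness of the restriction), and closed in $Y\setminus B$ (by compactness of $f(X)$ and Hausdorffness of $Y$), and then invoke connectedness of $Y\setminus B\cong\RR^n$. The clopen argument buys you quite a bit: it avoids all homological machinery, and, as you observe, it doesn't use injectivity of the restriction at all, nor the full identification of $X\setminus A$ and $Y\setminus B$ with $\RR^n$ (only nonemptiness of the former and connectedness of the latter). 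Both proofs implicitly need $Y$ Hausdorff --- the paper needs it so that $X/A$ and $Y/B$ are Hausdorff compactifications, you need it so that the compact set $f(X)$ is closed --- and both use $A=f^{-1}(B)$ as a genuine equality. Your version is the cleaner argument and establishes a strictly more general statement; the paper's degree-theoretic proof would only have an advantage if one also wanted to extract local degree information, which the stated lemma does not require.
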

\begin{proof}
	We consider the induced map $\bar f:X/A\longrightarrow Y/B$. Both spaces are compact
	(they are Hausdorff since $X$ and $Y$ are regular) and may therefore be identified with
	the Alexandrov compactifications of $X\setminus A$ and $Y\setminus B$, respectively.
	Hence $X/A\cong\mathbb{S}^n\cong Y/B$. It suffices to show that $\bar f$ is surjective.
	Let $p\in X\setminus A$ and $q=f(p)$. Since $f$ is a homeomorphism near $p$, we obtain  
	by excision in singular homology an isomorphism
	$\bar f_*:H_n(X/A,(X\setminus\{p\})/A)\longrightarrow H_n(Y/B,(Y\setminus\{q\})/B)$.
	From the long exact homology sequence we obtain an
	isomorphism $\bar f_*:H_n(X/A)\longrightarrow H_n(Y/B)$. Thus $\bar f$ has degree $\pm1$ and is therefore surjective.
	Indeed, if $\bar f$ was not surjective, then
	$\bar f$ would factor through a map $X/A\longrightarrow Y/B\setminus\{y\}\longrightarrow Y/B$, for some $y\in Y/B$.
	But $Y/B\setminus\{y\}$ is contractible, whence $\bar f_*=0$. On the other hand, $\bar f_*\neq 0$
	because $H_n(X/A)\cong\ZZ$. Hence $\bar f$ is surjective.
\end{proof}
For $L\in\Sigma\cup\{K\}$ we let $B_L^\vee$ denote the face of $B^\vee$ whose vertex set is $\{\xi_\ell\mid \ell\in L\}$,
and $U_L\subseteq B_L^\vee$ the corresponding open face.
\begin{Thm}\label{DualityThm}
	The map $c$ is a homeomorphism between $\widehat V$ and $B^\vee$ that maps 
	$V_L$ homeomorphically onto $U_L$, for each $L\in\Sigma\cup\{K\}$.
\end{Thm}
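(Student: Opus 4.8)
The plan is to assemble the pieces already in hand. First I would record that $c$ is a well-defined continuous map on $\bigsqcup\{V_L\mid L\in\Sigma\cup\{K\}\}$ by \cref{IsContinuous}, and that under the bijection $\Phi$ of \cref{Stratification} its source is exactly $\widehat V$, equipped with the topology whose basis is given in \cref{TopologyBasis}. So it suffices to show that the induced map $\widehat V\longrightarrow B^\vee$ is a continuous bijection; since $\widehat V$ is compact (as $V$ is proper, so that $\mathcal T_\Bnd=\mathcal T_\Cmp$ and \cref{Compactness} applies) and $B^\vee$ is Hausdorff, any continuous bijection is automatically a homeomorphism, and it will then follow from \cref{Injective} (which gives that $c_L$ is open on $V_L$) together with injectivity that each $V_L$ maps homeomorphically onto the open face $U_L$.

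Next I would establish that $c$ maps $V_L$ into $U_L$ and not merely into $B_L^\vee$. The containment $c_L(V_L)\subseteq B_L^\vee$ is \cref{Taylor}(iii). To see the image lands in the \emph{open} face $U_L$, note that $c_L(p)=\sum_{k\in L}\frac{\exp(\xi_k(p))}{b_L(p)}\xi_k$ is a convex combination of $\{\xi_k\mid k\in L\}$ with \emph{all} coefficients strictly positive, hence lies in the relative interior of $B_L^\vee$, i.e.\ in $U_L$ — provided the $\xi_k$, $k\in L$, are precisely the vertices of $B_L^\vee$ and are affinely independent in the appropriate sense, which holds because $L$ is a dual face. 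Since $B^\vee=\bigsqcup_{L}U_L$ is the decomposition of the polyhedron $B^\vee$ into its relatively open faces (including the open top-dimensional cell $U_K$), and since $\Sigma\cup\{K\}$ indexes exactly the faces of $B^\vee$ by the discussion preceding \cref{Stratification}, the map $c$ respects these two stratifications, sending the stratum $V_L$ into the stratum $U_L$. Injectivity of $c$ on all of $\widehat V$ then reduces to injectivity of each $c_L$ on $V_L$, which is \cref{Injective}, because distinct strata have disjoint images.

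It remains to prove surjectivity, and this is the main obstacle: the combinatorial and differential-geometric input only gives that each $c_L\colon V_L\to U_L$ is an open injection, which need not be onto an open face. Here I would invoke \cref{Topo} stratum by stratum, by downward induction on $L$ (equivalently, on $\dim U_L$). For the top stratum $L=K$ there is nothing below; in general I would apply \cref{Topo} with $X=\overline{V_L}$ (the closure of the stratum $V_L$ in $\widehat V$, which by \cref{Corol3} is $\bigsqcup\{V_{L'}\mid L'\supseteq L\}$), $A=X\setminus V_L=\bigsqcup\{V_{L'}\mid L'\supsetneq L\}$, $Y=B_L^\vee$, $B=\partial B_L^\vee=B_L^\vee\setminus U_L$, and $f$ the restriction of $c$. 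One checks: $X$ and $Y$ are compact ($X$ is closed in the compact $\widehat V$); $A=f^{-1}(B)$ because $c$ sends each $V_{L'}$ into $U_{L'}$ and $U_{L'}\subseteq\partial B_L^\vee$ exactly when $L'\supsetneq L$; the restriction $f\colon V_L\to U_L$ is injective and open by \cref{Injective}; and $V_L\cong\RR^{\dim V_L}$ (it is a quotient vector space $V/H_L$, whose dimension equals $\dim B_L^\vee=\dim U_L$ since $\dim V=\dim H_L+\dim F_L$ and $F_L$ is the affine span of $U_L$), while $U_L\cong\RR^{\dim U_L}$. Then \cref{Topo} yields that $c_L\colon V_L\to U_L$ is surjective. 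Doing this for every $L$ gives that $c$ is onto $\bigsqcup_L U_L=B^\vee$. Combined with injectivity and continuity from the compact space $\widehat V$ to the Hausdorff space $B^\vee$, this shows $c$ is a homeomorphism carrying $V_L$ onto $U_L$, which is the assertion of the theorem.
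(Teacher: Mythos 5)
Your proof strategy is essentially the paper's: show $c$ maps each stratum $V_L$ into the open face $U_L$, deduce global injectivity because the open faces partition $B^\vee$, apply \cref{Topo} stratum by stratum to get surjectivity, and conclude by compactness. However, there is a consistent direction error that, as written, would prevent the topological lemma from applying.

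You cite \cref{Corol3} as giving $\overline{V_L}=\bigsqcup\{V_{L'}\mid L'\supseteq L\}$, and correspondingly take $A=\bigsqcup\{V_{L'}\mid L'\supsetneq L\}$ and assert $U_{L'}\subseteq\partial B^\vee_L$ precisely when $L'\supsetneq L$. All three inclusions are reversed. \cref{Corol3} actually gives $\overline{V_L}=\bigsqcup\{V_{L'}\mid L'\subseteq L\}$; the face $B^\vee_{L'}$ is a proper face of $B^\vee_L$ precisely when $L'\subsetneq L$; and the correct choice for \cref{Topo} is $X=\overline{V_L}=V_L\sqcup\bigsqcup\{V_{L'}\mid L'\subsetneq L\}$, $A=\bigsqcup\{V_{L'}\mid L'\subsetneq L\}$, $Y=B^\vee_L$, $B=$ union of proper faces of $B^\vee_L$. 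With your formula, for $L=K$ you would get $X=V_K=V$, which is not closed in $\widehat V$ and hence not compact, so the hypotheses of \cref{Topo} fail; the statement ``for the top stratum $L=K$ there is nothing below'' is a symptom of this reversal --- the closure of the dense stratum $V_K$ is of course all of $\widehat V$. Once the direction is corrected you recover the paper's argument exactly.

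Two smaller points. First, the ``downward induction on $\dim U_L$'' is unnecessary: \cref{Topo} is applied to each $L$ independently, and the verification $A=f^{-1}(B)$ only uses the fact that $c$ sends each stratum $V_{L'}$ into the corresponding open face $U_{L'}$, not any inductive surjectivity hypothesis. Second, the caveat about the $\xi_k$, $k\in L$, being ``affinely independent in the appropriate sense'' is not needed: for any polytope $P=\mathrm{conv}\{v_1,\dots,v_n\}$, a point expressed as a convex combination $\sum\lambda_i v_i$ with all $\lambda_i>0$ lies in the relative interior of $P$, regardless of whether the $v_i$ are affinely independent. (The paper instead deduces $c_L(V_L)\subseteq U_L$ directly from openness of $c_L$ on $V_L$, using the rank computation in \cref{Injective}; either route is fine.)
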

\begin{proof}
	The restriction of the continuous map $c$ to any stratum $V_L$ is injective by Lemma~\ref{Injective}.
	Since $c_L$ is an open map, $c(V_L)$ is contained in the open face $U_L\subseteq B_L^\vee$.
	These open faces partition $B^\vee$ and thus $c$ is injective.
	Given $L\in\Sigma\cup\{K\}$, let $A$ denote the union of all $V_{L'}$ with $L'\subsetneq L$,
	and put $X=V_L\cup A$. Then $(X,A)$ is a compact pair. Let $M$ denote the union of all proper faces
	of the face $B^\vee_L$.
	Then $(B^\vee_L,M)$ is also a compact pair, and $c$ restricts to a map of pairs $f:(X,A)\longmapsto(B^\vee_L,M)$.
	The assumptions of Lemma~\ref{Topo} are satisfied and thus $f$ is surjective.
	
	Hence $c$ is surjective. Being a continuous bijection between compact spaces, it is a homeomorphism.
\end{proof}

\end{document}